\title[Reflection $+$ abundant urelements bi-interprets supercompactness]{Reflection in second-order set theory with abundant urelements bi-interprets a supercompact cardinal}
\author{Joel David Hamkins}
\address[Joel David Hamkins]
{O'Hara Professor of Philosophy and Mathematics, University of Notre Dame, 100 Malloy Hall, Notre Dame, IN 46556 USA \&\ Associate Faculty Member, Professor of Logic, Faculty of Philosophy, University of Oxford, UK}
\email{jdhamkins@nd.edu}
\urladdr{http://jdh.hamkins.org}
\author{Bokai Yao}
\address[Bokai Yao]{
University of Notre Dame}
 \email{byao1@nd.edu}
\urladdr{https://philpeople.org/profiles/bokai-yao}
\thanks{The authors are grateful to the anonymous referee for extremely detailed, helpful comments. Commentary about this article can be made on the first author's blog at \href{http://jdh.hamkins.org/second-order-reflection-with-abundant-urelements}{http://jdh.hamkins.org/second-order-reflection-with-abundant-urelements}.}
\subjclass[2020]{03E30, 03E55, 03E65}
\keywords{urelements, Kelley-Morse, reflection, supercompact cardinal}
\newtheorem{theorem}{Theorem}
\newtheorem*{theorem*}{Theorem}
\newtheorem{maintheorem}[theorem]{Main Theorem}
\newtheorem*{maintheorem*}{Main Theorem}
\newtheorem*{maintheorems*}{Main Theorems}
\newtheorem{corollary}[theorem]{Corollary}
\newtheorem*{corollary*}{Corollary}
\newtheorem*{corollaries*}{Corollaries}
\newtheorem{lemma}[theorem]{Lemma}
\theoremstyle{definition}
\newtheorem{definition}[theorem]{Definition}
\newtheorem*{definition*}{Definition}
\newtheorem*{question*}{Question}
\newtheorem*{questions*}{Questions}
\newtheorem*{mainquestion*}{Main Question} 
\newtheorem*{openquestion*}{Open Question} 
\theoremstyle{remark}
\newcommand{\QED}{\end{proof}}
\def\proclaim[#1]{{\bf #1}}
\def\BF#1.{{\bf #1.}}
\def\says#1:#2\par{\item[#1] #2\par}
\newcommand{\Los}{\L o\'s}
\newcommand{\Godel}{G\"odel}
\newcommand{\Erdos}{Erd\H{o}s}
\newcommand{\Levy}{L\'{e}vy}
\newcommand{\A}{{\mathbb A}}
\newcommand{\N}{{\mathbb N}}
\newcommand{\R}{{\mathbb R}}
\newcommand{\overbar}[1]{\mkern 3.5mu\overline{\mkern-3.5mu#1\mkern-.5mu}\mkern.5mu}
\newcommand{\barin}{\mathrel{\mkern3mu\overline{\mkern-3mu\in\mkern-1.5mu}\mkern1.5mu}}
\newcommand{\Abar}{{\overbar{A}}}
\newcommand{\Mbar}{{\overbar{M}}}
\newcommand{\Nbar}{{\overbar{N}}}
\newcommand{\Vbar}{{\overbar{V}}}
\newcommand{\Wbar}{{\overbar{W}}}
\newcommand{\id}{\mathop{\hbox{\small id}}}
\newcommand{\dotminus}{\mathbin{\text{\@dotminus}}}
\newcommand{\@dotminus}{%
  \ooalign{\hidewidth\raise1ex\hbox{.}\hidewidth\cr$\m@th-$\cr}%
}
\newcommand{\unaryminus}{\scalebox{0.67}[1.0]{\( - \)}}
\newcommand{\of}{\subseteq}
\newcommand{\fo}{\supseteq}
\newcommand{\set}[1]{\{\,{#1}\,\}}
\newcommand{\elesub}{\prec}
\newcommand{\Con}{\mathop{{\rm Con}}}
\newcommand{\image}{\mathbin{\hbox{\tt\char'42}}}
\newcommand{\restrict}{\upharpoonright} 
\newcommand{\satisfies}{\models}
\renewcommand{\setminus}{\raise.3ex\hbox{\rotatebox{-20}{$-$}}} 
\newcommand{\union}{\cup}
\renewcommand{\emptyset}{\varnothing}
\newcommand{\Union}{\bigcup}
\newcommand{\intersect}{\cap}
\newcommand{\Intersect}{\bigcap}
\newcommand{\smalllt}{\mathrel{\mathchoice{\raise2pt\hbox{$\scriptstyle<$}}{\raise1pt\hbox{$\scriptstyle<$}}{\raise0pt\hbox{$\scriptscriptstyle<$}}{\scriptscriptstyle<}}}
\newcommand{\smallleq}{\mathrel{\mathchoice{\raise2pt\hbox{$\scriptstyle\leq$}}{\raise1pt\hbox{$\scriptstyle\leq$}}{\raise1pt\hbox{$\scriptscriptstyle\leq$}}{\scriptscriptstyle\leq}}}
\newcommand{\ltkappa}{{{\smalllt}\kappa}}
\newcommand{\ltlambda}{{{\smalllt}\lambda}}
   \def\DHLhksqrt#1#2{%
   \setbox0=\hbox{$#1\sqrt{#2\,}$}\dimen0=\ht0
   \advance\dimen0-0.2\ht0
   \setbox2=\hbox{\vrule height\ht0 depth -\dimen0}%
   {\box0\lower0.4pt\box2}}
\def\[#1]{\mathopen{\lbrack\!\lbrack}#1\mathclose{\rbrack\!\rbrack}}
\newbox\gnBoxA
\newbox\gnBoxB
\newdimen\gnCornerHgt
\newdimen\gnArgHgt
\def\gcode #1{%
\setbox\gnBoxA=\hbox{$#1$}%
\setbox\gnBoxB=\hbox{$\bar #1$}%
\gnArgHgt=\ht\gnBoxB%
\ifnum     \gnArgHgt<\gnCornerHgt \gnArgHgt=0pt%
\else \advance \gnArgHgt by -\gnCornerHgt%
\fi \raise\gnArgHgt\hbox{\tiny$\ulcorner$} \box\gnBoxA %
\raise\gnArgHgt\hbox{\tiny$\urcorner$}}
\newcommand{\UnderTilde}[1]{{\setbox1=\hbox{$#1$}\baselineskip=0pt\vtop{\hbox{$#1$}\hbox to\wd1{\hfil$\sim$\hfil}}}{}}
\newcommand{\Undertilde}[1]{{\setbox1=\hbox{$#1$}\baselineskip=0pt\vtop{\hbox{$#1$}\hbox to\wd1{\hfil$\scriptstyle\sim$\hfil}}}{}}
\newcommand{\undertilde}[1]{{\setbox1=\hbox{$#1$}\baselineskip=0pt\vtop{\hbox{$#1$}\hbox to\wd1{\hfil$\scriptscriptstyle\sim$\hfil}}}{}}
\newcommand{\UnderdTilde}[1]{{\setbox1=\hbox{$#1$}\baselineskip=0pt\vtop{\hbox{$#1$}\hbox to\wd1{\hfil$\approx$\hfil}}}{}}
\newcommand{\Underdtilde}[1]{{\setbox1=\hbox{$#1$}\baselineskip=0pt\vtop{\hbox{$#1$}\hbox to\wd1{\hfil\scriptsize$\approx$\hfil}}}{}}
\renewcommand{\implies}{\mathrel{\rightarrow}}
\renewcommand{\iff}{\mathrel{\leftrightarrow}}
\def\<#1>{\left\langle#1\right\rangle}
\newcommand{\QEDbox}{\Box}   
\newcommand{\TC}{\mathop{{\rm TC}}}
\newcommand{\Ord}{\mathord{{\rm Ord}}}
\newcommand{\ETR}{{\rm ETR}}
\newcommand{\ZFC}{{\rm ZFC}}
\newcommand{\ZF}{{\rm ZF}}
\newcommand{\ZFCm}{\ZFC^-}
\newcommand{\ZFCU}{{\rm ZFCU}}
\newcommand{\ZFU}{{\rm ZFU}}
\newcommand{\KM}{{\rm KM}}
\newcommand{\GB}{{\rm GB}}
\newcommand{\GBC}{{\rm GBC}}
\newcommand{\GBc}{{\rm GBc}}
\newcommand{\GBCU}{{\rm GBCU}}
\newcommand{\AAA}{{\rm AAA}}
\newcommand{\AC}{{\rm AC}}
\newcommand{\DC}{{\rm DC}}
\newcommand{\CC}{{\rm CC}}
\newcommand{\AD}{{\rm AD}}
\newcommand{\cell}[1]{\boxit{\hbox to 17pt{\strut\hfil$#1$\hfil}}}
\newcommand{\head}[2]{\lower2pt\vbox{\hbox{\strut\footnotesize\it\hskip3pt#2}\boxit{\cell#1}}}
\newcommand{\boxit}[1]{\setbox4=\hbox{\kern2pt#1\kern2pt}\hbox{\vrule\vbox{\hrule\kern2pt\box4\kern2pt\hrule}\vrule}}
\newcommand{\Col}[3]{\hbox{\vbox{\baselineskip=0pt\parskip=0pt\cell#1\cell#2\cell#3}}}
\newcommand{\tapenames}{\raise 5pt\vbox to .7in{\hbox to .8in{\it\hfill input: \strut}\vfill\hbox to
.8in{\it\hfill scratch: \strut}\vfill\hbox to .8in{\it\hfill output: \strut}}}
\newcommand{\Head}[4]{\lower2pt\vbox{\hbox to25pt{\strut\footnotesize\it\hfill#4\hfill}\boxit{\Col#1#2#3}}}
\newcommand{\Dots}{\raise 5pt\vbox to .7in{\hbox{\ $\cdots$\strut}\vfill\hbox{\ $\cdots$\strut}\vfill\hbox{\
$\cdots$\strut}}}
\renewcommand{\UrlFont}{} 
\addcolon\nolinkurl{#1}}\iffieldundef{eprintclass}{}{\UrlFont{\mkbibbrackets{\thefield{eprintclass}}}}}
\addcolon\nolinkurl{#1}\iffieldundef{eprintclass}{}{\UrlFont{\mkbibbrackets{\thefield{eprintclass}}}}}}
\newcommand\KMU{\textup{KMU}}
\newcommand\AAvec{\smash{\vec{\A}}}
\newcommand\varin{\mathrel{\varepsilon}}
\newcommand\Vcalbar{\overline{\mathcal{V}}}
\renewcommand\Vbar{\overbar{V}}
\newcommand\ZFUvec{\textup{ZF}\vec{\textup{U}}}
\newcommand\ZFCUvec{\textup{ZFC}\vec{\textup{U}}}
\begin{document}

\begin{abstract}
After reviewing various natural bi-interpretations in urelement set theory, including second-order set theories with urelements, we explore the strength of second-order reflection in these contexts. Ultimately, we prove, second-order reflection with the abundant atom axiom is bi-interpretable and hence also equiconsistent with the existence of a supercompact cardinal. The proof relies on a reflection characterization of supercompactness, namely, a cardinal $\kappa$ is supercompact if and only if every $\Pi^1_1$ sentence true in a structure $M$ (of any size) containing $\kappa$ in a language of size less than $\kappa$ is also true in a substructure $m\elesub M$ of size less than~$\kappa$ with $m\intersect\kappa\in\kappa$.
\end{abstract}

\maketitle

\section{Set theory with urelements}

\begin{wrapfigure}[11]{o}{.36\textwidth}\vskip-1ex\hfill
\begin{tikzpicture}[yscale=1.5,scale=1.2]
\draw[fill=Orange] (0,0) -- (1,0) node[label={[scale=.7]right:Atoms}] {} -- (2,2) node[below right,scale=.7] {$V(A)$} -- (-1,2) -- (0,0);
\draw[fill=Yellow] (0,0) -- (-1,2) -- (1,2) -- cycle;
\draw (-.0625,.125) -- (1.0625,.125) node[scale=.7,label={[scale=.6]right:Sets of atoms}] {};
\draw (-.125,.25) -- (1.125,.25) node[scale=.7,label={[scale=.5]right:Sets of sets of atoms}] {};
\draw (0,1.5) node[scale=.6,align=center] {Pure\\ sets};
\draw[DarkRed,dotted,line width=2pt,line cap=round, dash pattern=on 0pt off \pgflinewidth] (0,0) to node[below,scale=.7] {$A$} (1,0);
\end{tikzpicture}
\end{wrapfigure}
Set theory was traditionally often conceived as a theory of abstract collection taking place over a class of already-existing primitive objects, the \emph{urelements} or \emph{atoms}---we use the terms interchangeably---the irreducible mathematical objects, which are not sets but out of which the sets are to be formed. Perhaps we have numbers as these primitive urelements, or perhaps geometric points or some other kind of atomic, irreducible mathematical object. With these urelement atoms in hand, we proceed to form the sets of atoms, sets of sets of atoms, and so on. The cumulative set-theoretic universe thus grows out of the atoms.

So let us consider this urelement set theory. If $A$ is the class of all urelement atoms, we denote the corresponding set-theoretic universe by $V(A)$, reserving the symbol $V$ to denote the class of pure sets, which have no urelements amongst their hereditary members; we do not intend to suggest with this notation that $V(A)$ was somehow constructed from $V$ by adding urelements $A$, although we shall explain (page \pageref{Page.rank-hierarchy}) senses in which $V(A)$ does admit a cumulative hierarchical structure. Rather, the central structure in focus is $\<V(A),\in,\A>$, a model of the theory we shall presently describe, where $V(A)$ denotes the class of all sets and atoms and $\A$ is a predicate picking out the atoms, so that $\A(a)$ holds exactly when $a$ is an urelement atom. This predicate distinguishes the urelements from the empty set $\emptyset$, which would otherwise appear like an urelement in having no elements, but amongst these $\in$-minimal objects, only $\emptyset$ is a set. The predicate $\A$ is thus definable in $\<V(A),\in>$ from the parameter $\emptyset$, and we could dispense with the predicate $\A$ by instead specifying $\emptyset$ as a constant.

\subsection{Axioms}

In the urelement context, the axiom of extensionality should be taken as the assertion that whenever two sets---two non-urelement objects---have all the same elements, then they are identical. The urelements, of course, have all the same elements as each other, none, but they are not sets and so do not constitute a counterexample to this formulation of extensionality. The other axioms we include in the theory \ZFCU\ of set theory with urelements are the axioms of  foundation, pairing, union, power set, infinity, the separation scheme, the collection scheme, and the axiom of choice, all stated in the language of urelement set theory $\set{\in,\A}$. Without the axiom of choice, we denote the theory by \ZFU.

Note that we include collection as an axiom of \ZFCU, but be careful, since not all authors do so. The replacement axiom, of course, is a consequence of collection and separation, but the converse is not true, and we shall prove (page \pageref{Page.Collection-fails}) that it is a strictly weaker theory to omit the collection axiom. The reader should take note in urelement set theory that many familiar set-theoretic arguments and results, which work as expected in \ZF\ or \ZFC\ set theory, break down in the urelement context unless one has adopted just the right principles to support them. One example is the failure we just mentioned of the implication from replacement to collection over the other axioms. Another easy example might be the fact that the rank hierarchy $V_\alpha(A)$, defined below, is not set-like when there are a proper class of urelements---the levels of the cumulative hierarchy are each a proper class, if $A$ is. A third example is that, while in \ZFC\ one can prove the first-order $\alpha$-dependent choice scheme $\alpha$-\DC\ for every ordinal $\alpha$, that is, for definable class relations, this is not true in \ZFCU, even though this theory has the axiom of choice, collection, replacement, reflection and other principles that might be deemed relevant (see page \pageref{Page.DC-failure}). The general lesson is that in urelement set theory, one should ensure that one has the right theory for  whatever argument is at hand.

In the language of pure set theory $\in$, there is a sense in which one can regard \ZFCU\ as a subtheory of \ZFC, since ultimately we have simply weakened the extensionality axiom to accommodate urelements (making sure also to retain the collection axiom). But in this language, one cannot distinguish $\emptyset$ from the urelements, and so one needs $\emptyset$ as a parameter or constant in this formulation of the theory, if one intends to distinguish it from the atoms.

\subsection{The urelement support of a set}

For any set $u$ consider the urelements of which it is formed, the atoms appearing in the transitive closure $\TC(u)$ of the set, the smallest transitive set containing $u$. The urelement \emph{support} of a set $u$, also known as the \emph{kernel} of $u$, is the set of urelements appearing in the transitive closure of $u$. For any set or class of urelements $B\of A$, we may form the class $V(B)$ consisting of all sets $u$ whose support is contained in $B$. The class of pure sets $V$ is therefore the same as $V(\emptyset)$, the sets with no urelements in their transitive closure. Since every set $u$ is supported by some set of urelements, it follows that $V(A)=\Union_{w\of A}V(w)$, where $w$ ranges over all possible sets of urelements.

For any class of urelements $B\of A$, the model $V(B)$ is a transitive class containing all pure sets and closed under pairing, union, and power set. This is therefore a model of the axioms of extensionality, foundation, pairing, union, power set, infinity, and separation, as well as the axiom of choice, provided these hold in $V(A)$. If the collection axiom holds in $V(A)$, then it will also hold in $V(B)$, since if $u\in V(B)$ and every $x\in u$ has a witness $y\in V(B)$ that $\varphi(x,y)^{V(B)}$, then we may collect these in $V(A)$, and the collecting set can be restricted to its instances in $V(B)$, which has support in $B$---more generally, $V(B)$ contains every subset of it that exists in $V(A)$. In short, $V(B)$ will be a model of \ZFCU\ whenever $V(A)$ is.\goodbreak

\subsection{The rank hierarchy}

In \ZFC\ set theory, the universe $V$ of pure sets is stratified into the familiar cumulative $V_\alpha$ hierarchy of rank, which can be defined by iterating the power set operator, starting with $V_0=\emptyset$, taking power set at successors $V_{\alpha+1}=P(V_\alpha)$, and unions at limits $V_\lambda=\Union_{\alpha<\lambda}V_\alpha$. Alternatively, one can define the rank of a set directly, with the recursive definition $\rho(y)=\sup\set{\rho(x)+1\mid x\in y}$. The rank $\rho(y)$ is the smallest ordinal for which $y\in V_{\rho(y)+1}$, and so $V_\alpha$ is the collection of sets having rank less than $\alpha$.\goodbreak

In the urelement context $V(A)$, one can attempt to iterate the power set by defining $V_0(A)=A$, taking the power set (power class) with $V_{\alpha+1}(A)=P(V_\alpha(A))$ and unions at limits $V_\lambda(A)=\Union_{\alpha<\lambda}V_\alpha(A)$. When $A$ is a set, this works completely fine. When $A$ is a proper class, however, this recursion is not set-like, because every $V_\alpha(A)$ will be a proper class. Furthermore, in the proper class case this manner of definition is problematic, because one cannot generally undertake class recursions of this form without additional axiomatic power---the principle of elementary transfinite recursion \ETR, asserting that all first-order class recursions along class well-ordered relations have solutions, is known to have consistency strength strictly stronger than \GBC, even for class recursions of length $\omega$; see  \cite{GitmanHamkins2016:OpenDeterminacyForClassGames}.

\label{Page.rank-hierarchy} 
Nevertheless, the rank recursion can be successfully undertaken in urelement set theory by defining the rank function directly on individual sets with the recursion $\rho(y)=\sup\set{\rho(x)+1\mid x\in y}$, adding that $\rho(a)=\unaryminus 1$ for the atoms. Thus, we can define the class $V_\alpha(A)$ to be the atoms and sets with rank less than $\alpha$. In this way, we achieve a rank hierarchy on $V(A)$. 
 $$V(A)=\Union_{\alpha\in\Ord}V_\alpha(A)$$
An alternative way to realize this is to define $V_\alpha(A)$ to be the union of $V_\alpha(w)$ for any \textit{set} of atoms $w\of A$, using the fact that the powerset-iteration construction works fine for $V_\alpha(w)$, stratifying the class $V(w)$, and every set in $V(A)$ is in some $V(w)$, where $w$ is the set of atoms supporting the given set. 
 $$V(A)=\Union_{\genfrac{}{}{0pt}{2}{\alpha\in\Ord}{w\of A}}V_\alpha(w)$$
Thus, the universe $V(A)$ is realized as a two-dimensional set-like cumulative hierarchy, a potentialist system that allows both for increasing height as $\alpha$ increases and increasing width as one increases the set of urelements $w\of A$ used for support. 

\subsection{Rigidity and nonrigidity}

In \ZFC\ set theory, every transitive set and class is rigid and two transitive sets are isomorphic with respect to $\in$ if and only if they are identical. This is because if $X$ and $Y$ are transitive classes of pure sets and $\pi:\<X,\in>\to\<Y,\in>$ is an $\in$-isomorphism, then necessarily $\pi(y)=\set{\pi(x)\mid x\in y}$, and so by $\in$-induction it follows that $\pi(y)=y$ and consequently $X=Y$ and $\pi=\id$. In particular, every transitive set in \ZFC\ is rigid with respect to $\in$, and the set-theoretic universe $\<V,\in>$ can have no definable automorphisms (allowing parameters).

With urelements, however, this rigidity phenomenon breaks down dramatically, and any two equinumerous classes of atoms $B$ and $C$ give rise to isomorphic corresponding transitive universes $\<V(B),\in>$ and $\<V(C),\in>$. Specifically, any bijection $\pi:B\to C$ can be recursively extended by defining $\pi(y)=\set{\pi(x)\mid x\in y}$, which will ensure the isomorphism property $x\in y\iff\pi(x)\in\pi(y)$. Similarly, any permutation $\pi:A\to A$ extends to an automorphism of the full set-theoretic universe $V(A)$ itself. When there are at least two urelements atoms, therefore, the universe $V(A)$ is not rigid. Precisely because the atoms are taken to be irreducible mathematical objects with no internal set-theoretic structure, they are set-theoretically indistinguishable, and so it becomes a fundamental part of the urelement perspective that the set-theoretic universe is homogeneous with respect to the urelement atoms. Meanwhile, perhaps one introduces urelements, such as numbers or geometric points, with an intended structure to be built on top of them; the homogeneity automorphisms would carry that structure to an alternative isomorphic copy.

\subsection{Consequences of nonrigidity for urelement set theory}

This abundance of automorphisms has various effects on the set-theoretic foundations, causing sets to work differently than might naively be expected.

One such effect, as we mentioned earlier, is that the axiom of collection does not follow from the axiom of replacement, as it does in ordinary \ZFC\ set theory. To see this, assume $V(A)$ has infinitely many urelements and let $W$ be the class of finitely supported sets, that is, the class of sets $u$ whose transitive closure has only finitely many atoms. Equivalently, $W=\Union_{\text{finite }w\of A}V(w)$, where $w$ ranges over the finite sets of atoms. We observe easily that $W$ is a supertransitive class (a transitive class containing all subsets of its members), and closed under pairing, union, and power set, and so $W$ is easily seen to satisfy extensionality, foundation, pairing, union, power set, separation, as well as the axiom of choice, assuming these hold in $V(A)$.

The class $W$ also satisfies replacement, as observed by \cite{Lévy1969:Definability-of-cardinals-numbers}. To see this, suppose that $u$ has finite support $w=\set{a_0,\ldots,a_n}$ and every $x\in u$ has some unique $y\in W$ for which $W$ satisfies $\varphi(x,y)$. Note that every $x\in u$ is also in $V(w)$. What we claim is that every $x\in u$ has the corresponding $y$ also in $V(w)$, for if $y$ has some other atoms in its transitive closure, not amongst $a_0,\ldots,a_n$, then we could apply an automorphism $\pi$ to $A$ that fixes each $a_0,\ldots,a_n$ but moves these other atoms to further distinct atoms not appearing in the transitive closure of $y$, causing $\pi(y)\neq y$. But since $\pi$ would be an automorphism of $W$ fixing $u$, this other set $\pi(y)$ would also fulfill the condition $\varphi(x,\pi(y))$ in $W$, contrary to uniqueness. So every witness $y$ must be in $V(a_0,\ldots,a_n)$. Therefore the image set of all such witnesses $\set{y\mid \exists x\in u\,\varphi^W(x,y)}$, which is a set by the replacement axiom in $V(A)$, would exist in $V(a_0,\ldots,a_n)$, fulfilling the replacement axiom of $W$.

\label{Page.Collection-fails}%
Meanwhile, the bad news is that the class of finitely supported sets $W$ does not fulfill the axiom of collection. Namely, for every natural number $n$, there is a set consisting of $n$ urelements, but there is no set having such witness sets for every $n\in\omega$, since every set in $W$ has only finite support amongst the atoms; this is a violation of the collection axiom.

The situation at bottom is extremely similar to the analogous failures of collection described in \cite{GitmanHamkinsJohnstone2016:WhatIsTheTheoryZFC-Powerset?} for the naive axiomatizations of set theory without power set, which use merely the replacement axiom rather than collection plus separation. The main fact, first observed by \cite{Zarach1996:ReplacmentDoesNotImplyCollection} and extended by \cite{GitmanHamkinsJohnstone2016:WhatIsTheTheoryZFC-Powerset?}, is that all kinds of things go wrong when one uses the naive theory for \ZFC\ without power set---the collection axiom can fail; the cardinal $\omega_1$ can be singular, even when the axiom of choice holds; there can be sets of reals of every size $\aleph_n$, but no set of reals of size $\aleph_\omega$; the \Los\ theorem can fail badly, even when choice holds; the \Los\ theorem can fail even in the case of a measurable cardinal $\kappa$; the class of $\Sigma_1$ definable sets can fail to be closed under bounded quantifiers. Meanwhile, the research literature unfortunately has numerous instances (identified in \cite{GitmanHamkinsJohnstone2016:WhatIsTheTheoryZFC-Powerset?}) where researchers provide the naive axiomatization of $\ZFC^-$, but then presume that the various problematic features do not occur, even though this is not provable in their theory. In light of these failures, the main conclusion to be drawn is that the naive axiomatization is simply a mistake---the correct axiomatization of ZFC without power set uses collection and separation, rather than mere replacement. This is the theory that holds in the natural instances and applications of this theory, such as the use in $H_{\kappa^+}$, and to our knowledge all of the erroneous uses and applications of the theory are easily and completely corrected simply by using this stronger version of the theory.

The situation in urelement set theory is analogous. One should include the collection and separation axioms in \ZFCU, not just replacement, or else face a similar phenomenon of unexpected, undesirable effects.

Unfortunately, the collection/replacement issue is not the end of the story, for there are further unwelcome situations that are possible even when one does include the collection axiom.\footnote{The second author \cite{Yao2023:Dissertation} identifies the separations of a hierarchy of natural principles over the weaker version of ZFCU formulated with only replacement.} To see this, assume we have uncountably many atoms and consider the class $\Wbar$ of all (atoms and) countably supported sets. That is, let $\Wbar=\Union_{\text{countable }w\of A}V(w)$. This class $\Wbar$ will satisfy extensionality, foundation, pairing, union, power set, separation, choice, infinity, and replacement. Furthermore, this class $\Wbar$ satisfies the collection axiom. To see this, suppose that $u\in \Wbar$ and for every $x\in u$ there is $y\in \Wbar$ for which $\varphi(x,y)$ holds in $\Wbar$. Let $w$ be the support of $u$ in the atoms, and extend this set of atoms $w\of \bar w$ by adding countably many additional atoms. Since every individual $y\in \Wbar$ has countable support in the atoms, there is an automorphism $\pi:V(A)\to V(A)$ fixing every $a\in w$ and for which $\pi(y)\in V(\bar w)$. That is, whichever new atoms $y$ used outside $w$, we can map them into $\bar w$ and thereby place $\pi(y)$ in $V(\bar w)$. Thus, the witnesses $y$ can always be found inside $V(\bar w)$, and consequently we can realize the collection axiom inside some sufficiently large $V_\alpha(\bar w)$.

Meanwhile, the model $\Wbar$ has a strange property: the class of all urelements $A$ is a proper class, but every set of atoms is countable. It follows that ZFCU does not prove that any set can be mapped injectively to any proper class. Moreover, although every countable sequence of distinct atoms can be extended further, and every chain of such sequences has its union as an upper bound, and $\omega_1$ exists, yet there is no $\omega_1$-sequence of distinct atoms in $\Wbar$. \label{Page.DC-failure}%
This is a violation of the $\omega_1$-dependent choice scheme $\omega_1$-\DC, asserting that if $T\of V(A)^{<\omega_1}$ is a definable class tree of countable sequences, such that every sequence in $T$ has a proper extension in $T$ and every countable chain in $T$ has an upper bound in $T$, then $T$ admits a branch of length~$\omega_1$.

This situation is counterintuitive because the $\omega_1$-dependent choice scheme is provable in \ZFC\ set theory---perhaps we usually think of it as an elementary consequence of the axiom of choice and collection in the \ZFC\ context. And yet, it is not provable in \ZFCU, even though this theory has the axiom of choice, collection, replacement, and (we shall show) the reflection principle. One can similarly make models where the failure of $\alpha$-\DC\ occurs first at higher ordinals $\alpha$.

Let us exhibit another instance of strange behavior in urelement set theory. Suppose that we have a model of \ZFCU\ with the urelements forming a set of size $\omega_1$. Let us split the $\omega_1$ many urelements into two disjoint sets $A\sqcup B$, both of size $\omega_1$. Now define $Y$ as the class of sets $u$ in $V(A\union B)$ whose support has at most countable intersection with $B$. So every set of urelements in $Y$ is contained in $A\union w$ for some countable subset $w\of B$. Meanwhile, we can prove that $Y$ is a model of \ZFCU\ just as with $\Wbar$ above. The curious thing to notice about this model $Y$ is that the urelements do not form a set---they are a proper class---and $A$ is a set of urelements in $Y$ of size $\omega_1$, but all sets of urelements disjoint from $A$ are countable. So we cannot find a duplicate of $A$ in the urelements disjoint from $A$.

One way to reintroduce a measure of rigidity into urelement set theory is to provide a fixed enumeration of the atoms by a class of pure sets. Or simply to well-order the atoms. Consider for example the structure $\<V(A),\in,<>$, where $<$ is a well-ordering of the atoms, not necessarily set-like. This universe is definably rigid, because if $\pi:V(A)\to V(A)$ is a definable automorphism, then in respecting~$<$ it must consequently fix every atom, since there can be no least element moved, and since also $\pi(y)=\set{\pi(x)\mid x\in y}$ for the sets, it follows by $\in$-induction that $\pi(y)=y$ and so $\pi$ is the identity. In this expanded signature, the class $W$ of (atoms and) finitely supported sets no longer satisfies replacement, when there are infinitely many atoms, since for each $n$ there is a unique atom $a_n$ that is $n$th in the order $<$, but the set of all these atoms is not finitely supported. Similarly, the class $\Wbar$ of (atoms and) countably supported sets does not satisfy replacement in the expanded language, when there are uncountably many atoms, because for each countable ordinal $\alpha$ there is an $\alpha$th atom, but the set of these is not countably supported.

An alternative method, which we shall employ in the next section, is to introduce a predicate $\AAvec$ for explicitly enumerating the urelements by a class of pure sets. The predicate $\AAvec(i,a_i)$ will associate each element $i\in I$ in some class $I$ of pure sets with a distinct urelement $a_i$. For example, perhaps we shall have exactly $\omega$ many urelements, or $\R$ many, or $\Ord$ many, or $V$ many urelements.\goodbreak

\section{Interpreting urelement set theory in \ZFC}\label{Section.Interpreting-urelements-in-ZFC}

Let us describe next how to interpret urelement set theory inside the urelement-free set theories \ZF\ and \ZFC. The fact that this is possible, and furthermore the fact that it is possible in a way that achieves strong axioms for the interpreted urelement set theories, such as ``there are a proper class of atoms'' or even ``the atoms can be well-ordered in type $\Ord$,'' tends to support the view that the foundational roles sought for the urelement theories can also be undertaken ultimately without urelements by interpreting these urelement theories inside \ZFC. On this view, one will not identify any new mathematical structure or relation in these urelement set theories that we cannot in principle find already in \ZFC.

Assume $V$ is a set-theoretic universe satisfying \ZFC, and suppose $A$ is any class of sets in $V$. (In the \ZFC\ context, we assume $A$ is a definable class, but we shall later move to the \GBC\ context, where we drop the definability requirement.) Inside $V$ we shall presently define a certain model of urelement set theory $V\[A]$, a model of $\ZFCU$ in which the urelements will be indexed by the class $A$, whose pure sets, furthermore, will be an exact copy of $V$.

We begin by making a copy of $A$ with $\bar A=\set{0}\times A$, representing every object $a\in A$ by its copy $\bar a=\<0,a>$. These will be the urelements of $V\[A]$, and we accordingly place $\bar A\of V\[A]$. Next, we simply close $V\[A]$ under the operation: if $y\of V\[A]$, then $\bar y=\<1,y>\in V\[A]$. That is, $V\[A]$ is the smallest class of sets containing $\bar A$ and closed under that operation. One may undertake this as a set-like transfinite recursion in \ZF\ by forming a hierarchy $V_\alpha\[A\intersect V_\alpha]$, gradually adding atoms and closing under the operation as the recursion progresses. The central idea is that $\bar y=\<1,y>$ will be the set in $V\[A]$ whose elements are the (actual) elements of $y$. Thus, we define the membership relation as $x\barin \bar y$ if and only if $x\in y$, where $\bar y=\<1,y>$. The relation $\barin$ is therefore set-like and wellfounded in $V$, because $x\barin \bar y$ requires that $x$ has lower rank than $\bar y$. In short, the definition is that every object in $V\[A]$ will be an ordered pair, with the urelements having the form $\bar a=\<0,a>$ for $a\in A$ and fulfilling $\A(\bar a)$ and the sets having the form $\bar y=\<1,y>$ for some $y\of V\[A]$, with the $\barin$-elements of $\bar y$ being simply the actual members of $y$.

The claim is that $\<V\[A],\barin,\A>$ is a model of \ZFCU. Extensionality holds because the urelements have no $\barin$-members and if $\bar y=\<1,y>$ and $\bar z=\<1,z>$ have the same $\barin$-elements, then necessarily $y=z$ and consequently $\bar y=\bar z$. Foundation holds because $\barin$ is wellfounded---the $\barin$-elements of a set have lower rank in $V$. It is easy to show that $V\[A]$ has the union, pairing and power set axioms simply by constructing the appropriate representing set in each case. For example, if $u,v\in V\[A]$, then $\<1,\set{u,v}>$ will represent the unordered pair $\set{u,v}$ in $V\[A]$. We get the separation, replacement, and collection axioms because every subset $y\of V\[A]$ that is realized in $V$ is represented by the set $\bar y\in V\[A]$. Similarly, the axiom of choice holds in $V\[A]$, if it holds in $V$, because we can form a suitable set $y$ of choices in $V$, and then consider the corresponding set $\bar y$ in $V\[A]$.

Every set $u$ in $V$ will be represented by a corresponding set $\check u=\<1,\set{\check v\mid v\in u}>$ in $V\[A]$, for it is easy to see that $v\in u\iff \check v\barin\check u$ and consequently $\<V,\in>$ is isomorphic to $\<\smash{\hat V},\barin>$, where $\hat V=\set{\check u\mid u\in V}$, which is a transitive class in $V\[A]$. Furthermore, using the fact the rank of a set $\bar y$ in $V\[A]$ is bounded by the rank of $\bar y$ in $V$, it follows that $\hat V$ consists precisely of the pure sets of $V\[A]$. In particular, the ordinals of $V\[A]$ are precisely the $\check\alpha$ for the ordinals $\alpha$ of $V$.

Let us expand the urelement language somewhat by introducing the urelement enumeration predicate $\AAvec$, which holds exactly in the instances $\AAvec(\check a,\bar a)$ when $a\in A$. That is, $\AAvec$ is the graph of the function $\check a\mapsto\bar a$, for $a\in A$. This predicate identifies the urelements---they are exactly the range of the map $\AAvec$---and so the urelement predicate $\A$ is definable from $\AAvec$. But $\AAvec$ also provides the enumeration of the urelements by the elements of $A$, well, by the copy of $A$ in the pure sets of $V\[A]$, that is, by the class $\hat A=\set{\check a\mid a\in A}$. This enumeration $\AAvec$ will not generally be definable from $\A$ alone, since permutations of the urelements will lead to automorphisms of $V\[A]$ that preserve $\A$ but not $\AAvec$. We interpret the phrase, ``there are $A$ many atoms'' as the assertion that $\AAvec$ is a bijection between $A$ and the class of urelements. The theories $\ZFUvec$ and $\ZFCUvec$ are the analogues of $\ZFU$ and $\ZFCU$ in the language with the atom-enumeration predicate $\AAvec$.

\begin{theorem}\label{Theorem.V-bi-interpretable-V[A]-AAvec}
For any definable class $A$, the \ZF\ set-theoretic universe $\<V,\in>$ is bi-interpretable with the urelement set-theoretic universe $\<V\[A],\barin,\AAvec>$, which is a model of $\ZFUvec+``$there are $A$ many urelements.''
\end{theorem}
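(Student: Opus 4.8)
The plan is to exhibit explicit interpretations in both directions and to check that each round-trip composition returns a definably isomorphic copy; that $\<V\[A],\barin,\AAvec>$ is a model of $\ZFUvec$ plus the assertion ``there are $A$ many urelements'' will mostly be read off from the discussion above. One interpretation is already before us: the construction of $\<V\[A],\barin,\AAvec>$ inside $\<V,\in>$ proceeds by a set-like transfinite recursion (the hierarchy $V_\alpha\[A\intersect V_\alpha]$), definable in $V$ from whatever parameters define $A$, and the discussion above verifies the $\ZFUvec$ axioms in the resulting structure. For the clause ``there are $A$ many urelements'': $\AAvec$ is the graph of $\check a\mapsto\bar a$, hence a bijection of $\hat A=\set{\check a\mid a\in A}$ onto the urelements, and $\hat A$ is exactly the class that the defining formula of $A$ (with parameters transported along $u\mapsto\check u$) picks out among the pure sets of $V\[A]$, since $u\mapsto\check u$ is an $\in$-isomorphism of $\<V,\in>$ onto the pure sets $\<\hat V,\barin>$ of $V\[A]$. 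In the other direction, the interpretation of $\<V,\in>$ inside $N=\<V\[A],\barin,\AAvec>$ is the definable class $\hat V$ of hereditarily urelement-free sets equipped with $\barin$, which by the standard argument is a model of $\ZF$ (of $\ZFC$ if $V\satisfies\ZFC$).

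The first round trip is immediate. Starting in $\<V,\in>$, form $\<V\[A],\barin,\AAvec>$ and pass to its pure sets; those are precisely $\hat V=\set{\check u\mid u\in V}$, and $u\mapsto\check u$ is an isomorphism of $\<V,\in>$ onto $\<\hat V,\barin>$ that is definable in $V$. So the composite interpretation lands back at a definably isomorphic copy of $\<V,\in>$.

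For the second round trip, start in $N$, pass to its pure sets $\hat V$, and --- working inside $\hat V$ (a model of $\ZF$) --- rerun the construction over the class $\hat A$ defined there by $A$'s formula, obtaining a structure $\<\hat V\[\hat A],\barin',\AAvec'>$ whose domain is a definable subclass of $\hat V$. Inside $N$ one defines a map $F$ from $N$ onto this structure by recursion on $\barin$: for an urelement $x$, let $F(x)$ be the urelement of $\hat V\[\hat A]$ indexed by the unique $i$ with $\AAvec(i,x)$; for a set $x$, let $F(x)$ be the element of $\hat V\[\hat A]$ whose members (in the sense of that structure) are exactly $\set{F(z)\mid z\barin x}$, which, being a set of pure sets, is an element of $\hat V$ --- obtained by collection and separation in $N$. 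The recursion is legitimate because $\barin$ is wellfounded and set-like and transitive closures are sets in $\ZFUvec$, so $F$ is a class function definable in $N$.

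The substance of the proof, and the step I expect to be the main obstacle, is verifying that $F$ is an isomorphism respecting $\barin$ and $\AAvec$. That $F$ preserves and reflects membership and is injective follows by $\barin$-induction from extensionality on the two sides. Surjectivity is where ``there are $A$ many urelements'' enters: a set of $\hat V\[\hat A]$ is reached inductively from its members, while a urelement of $\hat V\[\hat A]$ is, by construction, indexed by some $\check a\in\hat A$, and the bijectivity of $\AAvec$ between $\hat A$ and the urelements of $N$ supplies the unique urelement $x$ with $\AAvec(\check a,x)$, for which $F(x)$ is exactly that urelement --- and the same computation shows $F$ carries $\AAvec$ to $\AAvec'$. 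Since $F$ is definable in $N$, the composite interpretation returns $N$ to a definably isomorphic copy of itself, and the two interpretations together witness the claimed bi-interpretation.
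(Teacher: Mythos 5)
Your proposal is correct and follows the same route as the paper: interpret $\<V\[A],\barin,\AAvec>$ in $V$ by the given construction, interpret $V$ back as the pure sets via $u\mapsto\check u$, and use the enumeration $\AAvec$ to let $V\[A]$ recover how it was built from its pure sets. The paper leaves the second round-trip isomorphism as a one-line remark, whereas you spell out the recursion defining $F$ and verify it is an isomorphism; this is exactly the detail the paper's sketch is implicitly relying on.
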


\begin{proof}
We have already explained how $\<V\[A],\barin,\AAvec>$ is defined in $V$, and this is half of the interpretation. For the converse interpretation, we can interpret the structure $V$ in $\<V\[A],\barin>$ as the pure sets, since the map $u\mapsto\check u$ is an isomorphism of $\<V,\in>$ with the class of pure sets in $V\[A]$. So $V$ can see precisely how it is copied into $V\[A]$. Conversely, using the $\AAvec$ enumeration, the structure $\<V\[A],\barin,\AAvec>$ can see precisely how it is constructed from the pure sets by the interpretation we have provided. So this is a bi-interpretation.
\end{proof}

In light of these various bi-interpretations, we are justified in viewing these interpreted models of urelement set theory from two perspectives. On the one hand, $V\[A]$ is a definable class in the ground model $V$, defined there as an interpreted class structure. On the other hand, since $V$ is isomorphic to the pure sets of $V\[A]$, we can view the model instead as an extension $V(\Abar)$ of the original universe $V$, obtained by adjoining a new class $\Abar$ of urelements to $V$, enumerated by the class $A$. The bi-interpretation result explains exactly how these two perspectives are equivalent accounts of the same phenomenon.

Let us next record a further observation about the nature of the interpreted urelement structure.

\begin{theorem}
Assume \ZF\ in $V$ and $A$ is a definable class. Then every set in $V\[A]$ is equinumerous with a pure set there. In particular, if the axiom of choice holds in $V$, then it also holds in $V\[A]$.
\end{theorem}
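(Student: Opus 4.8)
The plan is to show that every set of $V\[A]$ is, inside $V\[A]$, equinumerous with the pure-set copy of the $V$-set that codes it. Recall that an arbitrary set of $V\[A]$ has the form $\bar y=\<1,y>$ where $y\of V\[A]$, and since $V\[A]\of V$ we have $y\in V$; its $\barin$-elements are exactly the members of $y$. I would take $\check y=\<1,\set{\check v\mid v\in y}>$ as the candidate pure set; it is a pure set of $V\[A]$ since $y\in V$ and $\hat V=\set{\check u\mid u\in V}$ is the class of pure sets, and its $\barin$-elements are exactly the $\check v$ for $v\in y$. Since $u\mapsto\check u$ is injective, the assignment $v\mapsto\check v$ is, in $V$, a bijection from $y$ onto $\set{\check v\mid v\in y}$, so the real content is to realize this bijection as an element of $V\[A]$.

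To do that, for each $v\in y$ I would form the Kuratowski pair $p_v=\<v,\check v>$ as computed inside $\<V\[A],\barin>$, namely $p_v=\overline{\set{\,\overline{\set v},\,\overline{\set{v,\check v}}\,}}$, and check that $p_v\in V\[A]$: this holds because $v\in V\[A]$, $\check v\in\hat V\of V\[A]$, and $\bar z=\<1,z>\in V\[A]$ for every set $z\of V\[A]$. Then $\set{p_v\mid v\in y}$ is a set by replacement in $V$ and is a subset of $V\[A]$, so $f=\overline{\set{p_v\mid v\in y}}$ is an element of $V\[A]$. Unwinding definitions, $\<V\[A],\barin>$ sees $f$ as the set of ordered pairs $\set{\<v,\check v>\mid v\in y}$, which is single-valued, has domain the $\barin$-elements of $\bar y$ and range the $\barin$-elements of $\check y$, and is injective because $\check v=\check v'$ forces $v=v'$. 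Hence $f$ witnesses in $V\[A]$ that $\bar y$ and the pure set $\check y$ are equinumerous, which is the first assertion.

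For the ``in particular,'' I would argue as follows. If the axiom of choice holds in $V$, then since $u\mapsto\check u$ is an isomorphism of $\<V,\in>$ with the transitive class $\<\hat V,\barin>$ of pure sets of $V\[A]$, any well-ordering of a set $u$ in $V$ is carried to a well-ordering of $\check u$, and such a well-ordering is itself a pure set, hence an element of $V\[A]$. So every pure set of $V\[A]$ is well-orderable in $V\[A]$, and by the first part every set of $V\[A]$ is equinumerous there with a pure set; hence every set of $V\[A]$ is well-orderable in $V\[A]$, i.e.\ $V\[A]\models\AC$. I expect the only fiddly step to be the bookkeeping in the second paragraph---checking that the internally-formed pairs $p_v$ and the coded function $f$ stay inside the class $V\[A]$, and that $\<V\[A],\barin>$ decodes $f$ as exactly the bijection $v\mapsto\check v$; the remainder is a direct transfer along the canonical copying map $u\mapsto\check u$.
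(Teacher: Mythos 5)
Your proposal is correct and follows essentially the same route as the paper's proof, which simply asserts that $\bar y=\<1,y>$ is equinumerous in $V\[A]$ with the pure set $\check y$ and then transfers well-orderings along this equivalence; you have merely filled in the (correct) bookkeeping showing that the bijection $v\mapsto\check v$ is realized as an internal element $f\in V\[A]$ via internally coded Kuratowski pairs. No gaps.
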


\begin{proof}
Every set $\bar y=\<1,y>$ created in $V\[A]$ is equinumerous there with the pure set $\check y$. So if \AC\ holds in $V$, then since every pure set in $V\[A]$ can be well-ordered there, and these orders can be transferred to the sets $\bar y$ in $V\[A]$, establishing \AC\ in $V\[A]$.
\end{proof}

The hypothesis that every set is equinumerous with a pure set is philosophically significant with regard to structuralism, because it follows that every mathematical structure is isomorphic to a structure in the pure sets. A structuralist in the philosophy of mathematics could therefore take this as evidence that urelements are not needed in such a situation, since one can already realize all possible mathematical structure in the universe of pure sets. Alternatively, since one mathematician's modus ponens is another's modus tollens, the observation is evidence that in order for urelements to be helpful to us in the foundations of mathematics, we should want or expect to have sets of urelements that are not equinumerous with any pure set. In particular, this would require a denial of the axiom of choice, since every well order is isomorphic to an ordinal, which is a pure set. Do we expect that there are collections of irreducible atomic mathematical objects that cannot in principle be placed into bijection with any pure set? That seems strange and wonderful, but the puzzle is that such kind of urelements would be very different from the traditional urelement proposals, such as numbers and geometric points, both of which find equinumerosity with pure sets. So the philosophical task for the urelement supporters is to explain why we would need a foundational theory to accommodate strange and bizarre set of urelements, if the intended sets of urelements are tame.

\begin{corollary}\label{Corollary.ZFC-ZFCU+A-atoms-bi-interpretable}
The theories \ZFC\ and $\ZFCUvec+``$there are $A$ many atoms'' are bi-interpretable, where $A$ is any \ZF-definable class. Similarly with $\ZF$ and $\ZFUvec+``$there are $A$ many atoms.''
\end{corollary}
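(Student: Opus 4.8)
The plan is to promote the model-by-model bi-interpretation of Theorem~\ref{Theorem.V-bi-interpretable-V[A]-AAvec} to a bi-interpretation of theories, using that the interpreting formulas there are uniform in the model and that both halves of the round trip are witnessed by \emph{definable} isomorphisms, provably in the relevant theory. So fix a formula $\psi$ defining the class $A$. The forward interpretation $V\mapsto\<V\[A],\barin,\AAvec>$ is already given by fixed formulas, and the verification carried out just before Theorem~\ref{Theorem.V-bi-interpretable-V[A]-AAvec} shows, provably in \ZF, that this interpreted structure satisfies $\ZFUvec$ together with ``there are $A$ many atoms,'' the latter because $\AAvec$ is by construction the graph of the bijection $\check a\mapsto\bar a$ from the copy $\hat A=\set{\check a\mid a\in A}$ of $A$ among the pure sets onto the urelements. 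Adjoining choice, the theorem above on the transfer of choice to $V\[A]$ shows that \ZFC\ proves the axiom of choice holds in $V\[A]$, so \ZFC\ interprets $\ZFCUvec$ plus ``there are $A$ many atoms.''

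For the converse interpretation, I would work inside a model $\<U,\varin,\AAvec^U>$ of $\ZFUvec$ plus ``there are $A$ many atoms,'' and let $V^U$ be its class of pure sets under $\varin$. A routine check shows that $\ZFUvec$ proves $\<V^U,\varin>$ is a model of \ZF, and $\ZFCUvec$ proves it is a model of \ZFC: extensionality, foundation, pairing, union and infinity are inherited, every subset of a pure set (computed in $U$) is pure so power set and separation go through, collection among pure sets follows from collection in $U$ applied to ``$y$ is pure and $\varphi(x,y)^{V^U}$'' followed by separation, and a well-ordering of a pure set is a subset of its square, hence pure, giving choice. Now form $(V^U)\[A]$ as an interpreted structure inside $V^U$, with urelements $\<0,i>$ indexed by $i\in A^{V^U}=\set{i\in V^U\mid\psi(i)^{V^U}}$, and define a map $\pi$ from $U$ to $(V^U)\[A]$ by $\varin$-recursion: each urelement $a$ of $U$ goes to $\<0,i>$ for the unique pure $i$ with $\AAvec^U(i,a)$ --- which exists and is unique exactly because $U$ thinks ``there are $A$ many atoms'' --- and each set $x$ of $U$ goes to $\<1,\set{\pi(y)\mid y\varin x}>$. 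Foundation in $U$ makes $\pi$ total, extensionality makes it injective, and one checks it is surjective and is an isomorphism of $\<U,\varin,\AAvec^U>$ onto $\<(V^U)\[A],\barin,\AAvec>$, all definably in $U$.

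It remains to confirm that the two composite interpretations are definably isomorphic to the identity, provably in the respective theories. Starting from $V\models\ZF$, passing to $\<V\[A],\barin,\AAvec>$ and then to its pure sets returns $\hat V=\set{\check u\mid u\in V}$, and $u\mapsto\check u$ is the required isomorphism, definable in $V$ --- this is exactly what is extracted in the proof of Theorem~\ref{Theorem.V-bi-interpretable-V[A]-AAvec}. Starting from $U\models\ZFUvec+{}$``there are $A$ many atoms,'' passing to $V^U$ and then forming $(V^U)\[A]$ returns a structure isomorphic to $U$ via the map $\pi$ above, definable in $U$. This establishes the bi-interpretation of \ZF\ with $\ZFUvec$ plus ``there are $A$ many atoms,'' and adding the choice clauses everywhere --- preserved by both legs by the transfer theorem above and by the fact that well-orderings of pure sets are pure --- gives the bi-interpretation of \ZFC\ with $\ZFCUvec$ plus ``there are $A$ many atoms.''

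I expect the main obstacle to be the converse direction: one must check carefully that $U$ provably reconstructs itself from its pure sets together with its urelement enumeration. Concretely, one needs that the $\varin$-recursion defining $\pi$ reaches every object of $U$ --- which requires foundation in $U$ and the fact that the urelements of $U$ are \emph{exactly} the range of $\AAvec^U$ over $A^{V^U}$, so that there are no stray atoms that $\pi$ fails to name --- and one needs that the formula $\psi$ picks out the ``same'' class $A$ whether it is evaluated in $V$, in $V^U$, or read off as the index class of the urelements, so that the axiom ``there are $A$ many atoms'' survives both legs of the round trip. These are precisely the points where the hypothesis ``there are $A$ many atoms'' is used, and handling them amounts to unwinding the definitions of the two interpretations against each other.
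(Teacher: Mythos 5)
Your proof is correct and takes essentially the same approach as the paper, whose own proof of this corollary is just the one-line observation that the interpretation definitions from Theorem~\ref{Theorem.V-bi-interpretable-V[A]-AAvec} are uniform across models and therefore lift from a model-by-model bi-interpretation to a bi-interpretation of the theories. You have simply written out in full the details the paper leaves implicit, in particular the explicit $\varin$-recursive isomorphism $\pi$ (using the enumeration predicate $\AAvec$) that witnesses the round trip on the urelement side.
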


What we mean is that the assertion ``there are $A$ many atoms'' is expressed in the language with the urelement enumeration predicate $\AAvec$ by asserting that this predicate enumerates the urelements using the class $A$ as it is defined in the class of pure sets of the $\ZFCUvec$ model. In particular, the following theories are bi-interpretable:
\begin{enumerate}
  \item \ZFC
  \item $\ZFCUvec+``$there are $\omega$ many atoms''
  \item $\ZFCUvec+``$there are $\R$ many atoms''
  \item $\ZFCUvec+``$there are $\aleph_{\omega^2+5}$ many atoms''
  \item $\ZFCUvec+``$there are $\Ord$ many atoms''
  \item $\ZFCUvec+``$there are $V$ many atoms''
\end{enumerate}
where we express the theories in the language with the urelement enumeration predicate $\AAvec$. The corresponding theories with \ZFCU\ in place of $\ZFCUvec$ also are bi-interpretable with each other and \ZFC, and all the above theories, provided one allows a parameter---simply use a set parameter to enumerate the atoms, as explained in theorem \ref{Theorem.Set-urelements-bi-interpretation}.

\begin{proof}[Proof of corollary \ref{Corollary.ZFC-ZFCU+A-atoms-bi-interpretable}]
The interpretation definitions used in the proof of theorem \ref{Theorem.V-bi-interpretable-V[A]-AAvec} are uniform across models, and therefore provide bi-interpretations of these theories, not just model-by-model. The theories $\ZFCU$ and $\ZFCUvec$, in the case the urelements form a set, are bi-interpretable with parameters, since one need only fix the set-sized enumeration of the urelements.
\end{proof}

It follows that $\ZFCUvec$ is not a tight theory in the sense of \cite{Enayat2016:Variations-on-a-Visserian-theme, FreireHamkins2021:Bi-interpretation-in-weak-set-theories}, since it admits these distinct bi-interpretable extensions. A theory is \emph{tight} when any extensions of it are bi-interpretable if and only if they are identical.

\begin{corollary}
$\ZFCUvec$ is not tight.
\end{corollary}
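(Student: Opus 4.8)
The plan is to read this off directly from Corollary~\ref{Corollary.ZFC-ZFCU+A-atoms-bi-interpretable}. That corollary produced, among others, the two extensions $\ZFCUvec+{}$``there are $\omega$ many atoms'' and $\ZFCUvec+{}$``there are $\R$ many atoms'', and showed that each of them is bi-interpretable with \ZFC. First I would invoke the transitivity of bi-interpretability: composing the interpretation of $\ZFC$ in $\ZFCUvec+{}$``$\omega$ many atoms'' with the inverse of the interpretation of $\ZFC$ in $\ZFCUvec+{}$``$\R$ many atoms'' yields a bi-interpretation of the two extensions with one another. In the present case this composition is completely transparent, since the interpreting maps supplied in the proof of Theorem~\ref{Theorem.V-bi-interpretable-V[A]-AAvec} are given by explicit uniform definitions, so their composition is again explicitly definable, and the definable isomorphisms witnessing the round-trip compositions are obtained by composing the given ones.

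Next I would check that these two extensions are genuinely \emph{non-identical} theories, not merely deductively equivalent ones. The sentence asserting that $\AAvec$ restricts to a bijection between $\omega$ (as computed among the pure sets) and the class of urelements is a theorem of $\ZFCUvec+{}$``$\omega$ many atoms'' and is refutable in $\ZFCUvec+{}$``$\R$ many atoms'', since in the latter the urelements are put into bijection with a pure set of size $\continuum$, and no set is simultaneously of size $\omega$ and of size $\continuum$. Thus we have exhibited two distinct extensions of $\ZFCUvec$ that are nonetheless bi-interpretable, which is precisely the failure of tightness.

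I do not expect any real obstacle; the mathematical content is entirely carried by Corollary~\ref{Corollary.ZFC-ZFCU+A-atoms-bi-interpretable}, and the only additional remarks needed are the (standard) transitivity of bi-interpretation and the (immediate) observation that the two chosen extensions disagree about a sentence in the common language, which follows at once from the meaning of ``there are $A$ many atoms.'' Any two of the six extensions listed after the corollary would serve equally well.
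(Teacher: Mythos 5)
Your proposal is correct and follows exactly the route the paper itself takes: the corollary is read off from Corollary~\ref{Corollary.ZFC-ZFCU+A-atoms-bi-interpretable}, which supplies distinct extensions of $\ZFCUvec$ (e.g.\ ``$\omega$ many atoms'' versus ``$\R$ many atoms'') that are mutually bi-interpretable via their common bi-interpretation with \ZFC. Your added checks---transitivity of bi-interpretability and the observation that the two extensions disagree on a sentence---are exactly the implicit details the paper leaves to the reader.
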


If the class of urelements is a set, then we can dispense with the need for the urelement predicates $\A$ and $\AAvec$ in the interpretation, since the enumeration will in effect be one of the sets available in $V\[A]$, which we can simply use as a parameter.

\begin{theorem}\label{Theorem.Set-urelements-bi-interpretation}
 Working in the \ZF\ set-theoretic universe $V$, if $A$ is a set, then the following class structures are bi-interpretable, allowing parameters:
 \begin{enumerate}
   \item $\<V,\in>$
   \item $\<V\[A],\barin>$
   \item $\<V\[A],\barin,\A>$
   \item $\<V\[A],\barin,\AAvec>$
 \end{enumerate}
\end{theorem}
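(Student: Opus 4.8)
The plan is to use that bi-interpretability is an equivalence relation on structures---reflexive via the identity interpretation, symmetric by exchanging the two roles, and transitive by composing interpretations---so it suffices to connect each of~(2), (3), (4) to~(1) by a chain. The backbone is Theorem~\ref{Theorem.V-bi-interpretable-V[A]-AAvec}: since a set $A$ is in particular a definable class (using $A$ itself as a parameter), that theorem already yields that $\<V,\in>$ and $\<V\[A],\barin,\AAvec>$ are bi-interpretable, with the parameter $A$ needed on the $V$-side to name the class being enumerated. So $(1)\equiv(4)$ is in hand, and the work is to splice $(2)$ and $(3)$ into the same equivalence class; this is precisely where set-hood of $A$, rather than mere definability, comes in.

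First I would establish $(3)\equiv(4)$. Interpreting $(3)$ inside $(4)$ needs no parameter: the predicate $\A$ is definable from $\AAvec$ since the urelements are exactly the objects in the range of $\AAvec$, and otherwise the domain and $\barin$ are unchanged. For the reverse, to interpret $(4)$ inside $(3)$ I must define $\AAvec$ from $\barin$ and $\A$, and here I use that when $A$ is a set the enumeration $\check a\mapsto\bar a$ is itself an element of $V\[A]$: in $V$ the map $a\mapsto\check a$ is given by $\in$-recursion on the set $\TC(A)$, so by replacement its graph on $A$ is a set, and transferring this graph into $V\[A]$ yields a set $e\in V\[A]$ whose $\barin$-elements are exactly the pairs $\<\check a,\bar a>$. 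Using $e$ as a parameter, $\AAvec(i,u)$ holds iff the pair $\<i,u>$ computed in $V\[A]$ is a $\barin$-element of $e$. Both interpretations here keep the domain $V\[A]$ with its actual $\barin$, so the round-trip composition is literally the identity, which is a definable isomorphism; hence this is a bi-interpretation.

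Next, $(2)\equiv(3)$. Interpreting $(2)$ inside $(3)$ is trivial, as $(2)$ is the reduct of $(3)$ obtained by forgetting $\A$. For the other direction one must recover $\A$ from $\barin$ alone, which genuinely requires a parameter: any bijection of the $\barin$-minimal objects of $V\[A]$---the urelements together with $\bar\emptyset$---extends, by the recursion $\pi(y)=\set{\pi(x)\mid x\barin y}$, to an automorphism of $\<V\[A],\barin>$, and when $A\neq\emptyset$ such an automorphism can swap an urelement with $\bar\emptyset$, so $\A$ is not $\barin$-definable without parameters. But the theorem permits parameters, and with the parameter $\bar\emptyset=\<1,\emptyset>$ we may put $\A(x)\iff\neg\exists y\,(y\barin x)\ \wedge\ x\neq\bar\emptyset$. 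Again the two interpretations sit on a common domain and compose to the identity. Chaining $(2)\equiv(3)\equiv(4)\equiv(1)$ through transitivity gives the theorem, with the set parameters $A$, $e$, and $\bar\emptyset$ used as indicated.

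I do not expect a serious obstacle; the only point demanding care is the formal bi-interpretation requirement that each linking pair of interpretations have a \emph{definable} round-trip isomorphism with the original structure. Among $(2)$, $(3)$, $(4)$ this is immediate because those interpretations keep the domain fixed, with the identity as the isomorphism; only the step $(1)\equiv(4)$ involves a genuine change of coordinates (the maps $u\mapsto\check u$ and $\bar a\mapsto\check a$), whose definability was already verified in the proof of Theorem~\ref{Theorem.V-bi-interpretable-V[A]-AAvec}. The single new ingredient is the observation that when $A$ is a set the enumeration predicate $\AAvec$ is coded by a set of $V\[A]$ and can therefore be supplied as a parameter rather than as primitive vocabulary---which is exactly what lets us drop both $\AAvec$ and $\A$ from the signature while remaining bi-interpretable.
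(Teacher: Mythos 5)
Your proof is correct and follows essentially the same route as the paper: use Theorem~\ref{Theorem.V-bi-interpretable-V[A]-AAvec} for $(1)\equiv(4)$, and observe that when $A$ is a set the predicates $\A$ and $\AAvec$ are coded by sets of $V\[A]$ (or definable from the parameter $\bar\emptyset$), so the latter three structures are parametrically bi-interpretable over a common domain with identity round-trip. The paper states this in two sentences; your write-up just supplies the details.
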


\begin{proof}
If $A$ is a set in $V$, then the predicates $\A$ and $\AAvec$ both amount to sets in $V\[A]$, and so the latter three structures are bi-interpretable with those parameters. And we know $\<V,\in>$ is bi-interpretable with $\<V\[A],\barin,\AAvec>$ by theorem \ref{Theorem.V-bi-interpretable-V[A]-AAvec}.
\end{proof}

\begin{corollary}The following theories are parametrically bi-interpretable.
 \begin{enumerate}
     \item \ZFC
     \item $\ZFCU+``$the urelements form a set''
     \item $\ZFCUvec+$``the urelements form a set.''
 \end{enumerate}
 Similarly, the following theories are also parametrically bi-interpretable:
 \begin{enumerate}[resume]
     \item \ZF
     \item $\ZFU+$``the urelements are bijective with a pure set''
 \end{enumerate} 
\end{corollary}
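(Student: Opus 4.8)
The plan is to deduce this from Theorem~\ref{Theorem.Set-urelements-bi-interpretation} together with the uniformity of the interpretations used in the proof of Theorem~\ref{Theorem.V-bi-interpretable-V[A]-AAvec}, in exactly the manner that Corollary~\ref{Corollary.ZFC-ZFCU+A-atoms-bi-interpretable} was deduced. Since bi-interpretability is transitive, it suffices to show that each listed urelement theory is parametrically bi-interpretable with \ZFC\ (respectively \ZF\ for the second list). I would first record the easy half: in any model $M$ of \ZFCU\ (or \ZFU) the class of pure sets---those with no urelement in their transitive closure---forms a transitive class satisfying \ZFC\ (or \ZF), and the verification of this is uniform across models and uses neither $\A$ nor $\AAvec$, so it furnishes an interpretation of \ZFC\ in the urelement theory.

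For the other direction I would fix a set parameter $A$ in a \ZFC\ model $V$ and take the interpreted structure $V\[A]$ of Theorem~\ref{Theorem.V-bi-interpretable-V[A]-AAvec}; as $\bar A$ is a set there, this models $\ZFCUvec+``$the urelements form a set,'' and, forgetting $\AAvec$ and using $\emptyset$ as a parameter for $\A$, also $\ZFCU+``$the urelements form a set.'' The round trip starting in \ZFC, passing to $V\[A]$ and back to its pure sets, is the definable map $u\mapsto\check u$, as already observed. The round trip the other way is the step that needs care: from a model $M$ of $\ZFCU+``$the urelements form a set'' one passes to its pure sets $V^M$, and by the axiom of choice $M$ believes its urelements are bijective with some cardinal $A$; choosing such a bijection $f$ as a parameter, one checks that $f$ induces a definable isomorphism of $M$ with the copy $V^M\[A]$ rebuilt inside the pure sets. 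This is precisely where the ``parametric'' of parametric bi-interpretation is genuinely needed---without $f$ (or, in the $\ZFCUvec$ case, the canonical enumeration carried by $\AAvec$) the rebuilt structure is only noncanonically isomorphic to $M$, because of the homogeneity of the urelements. With $f$ at hand the isomorphism is definable, so the composite interpretation is provably isomorphic to the identity, and transitivity finishes the first list.

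The second list is handled identically with the axiom of choice deleted throughout, the clause ``the urelements are bijective with a pure set'' playing exactly the role choice played above: given a model $M$ of $\ZFU$ with this property, a bijection of its urelements with a pure set $A$, taken as a parameter, induces the definable isomorphism of $M$ with $V^M\[A]$, while conversely, for a fixed pure set $A$ (for concreteness $A=\omega$) in a model of \ZF, the structure $\<V\[A],\barin>$ is a model of $\ZFU+``$the urelements are bijective with a pure set.'' I expect the only real obstacle to be the bookkeeping of parameters in the composite interpretations, which is routine once the explicit reconstruction isomorphisms are written down.
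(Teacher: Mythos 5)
Your proposal is correct and follows essentially the same route as the paper: the point in both is that the axiom of choice (or, in the \ZFU\ case, the explicit hypothesis) provides a definable nonempty class of pure-set enumerations of the urelements, any one of which serves as the parameter enabling the bi-interpretation of theorem \ref{Theorem.Set-urelements-bi-interpretation}. You merely spell out the round-trip verifications in more detail than the paper does.
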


\begin{proof}
If the urelements form a set, then by the axiom of choice they are bijective with some pure set, and the property of being such a pure-set-enumeration of the urelements is definable. Any such parameter will enable the bi-interpretations of theorem \ref{Theorem.Set-urelements-bi-interpretation}. That is, the theory can define a nonempty set of parameters, any one of which will work as a parameter for the bi-interpretation, and this is what it means to be parametrically bi-interpretable.
\end{proof}

If $A$ is a proper class, however, then it will turn out that $\<V,\in>$ is not bi-interpretable with $\<V\[A],\barin,\A>$, even with parameters, although these structures are mutually interpretable and \emph{semi-bi-interpretable}, meaning that $V$ can identify its copy inside $V\[A]$.\goodbreak

\begin{theorem}
 The following theories are mutually interpretable, but no two of them are bi-interpretable, even allowing parameters.
   \begin{enumerate}
     \item \ZFC
     \item \ZFCU
     \item $\ZFCU+``$the urelements form a proper class''
   \end{enumerate}
Meanwhile, \ZFC\ is semi-bi-interpretable with these latter theories.
\end{theorem}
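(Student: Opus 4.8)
The plan is to treat the three parts separately, pushing all the real difficulty into a single lemma about automorphisms, and otherwise relying on the $V\[A]$ construction of Theorem~\ref{Theorem.V-bi-interpretable-V[A]-AAvec}.

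\emph{Mutual interpretability and semi-bi-interpretability.} Each of the three theories interprets $\ZFC$ via its class of pure sets $V=V(\emptyset)$, a definable transitive class; the only axiom needing care is collection, which holds in $V$ because being pure is definable, so witnesses obtained from collection in $V(A)$ can be cut down to their pure instances (as on page~\pageref{Page.Collection-fails}). Conversely $\ZFC$ interprets all three by $V\mapsto V\[A]$: with $A=\emptyset$ one gets a model of $\ZFCU$, and with $A$ a proper class, say $A=V$ or $A=\Ord$, a model of $\ZFCU$ together with ``the urelements form a proper class.'' Since that theory extends $\ZFCU$ it interprets it trivially, and $\ZFCU$ interprets it by composing $\mathcal N\mapsto V^{\mathcal N}$ with $V\mapsto V\[V]$. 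For semi-bi-interpretability, using $\kappa\colon V\mapsto V\[V]$ and $\iota\colon\mathcal N\mapsto V^{\mathcal N}$, starting in a model $V\models\ZFC$ the composite $\iota(\kappa(V))$ is the pure-set class of $V\[V]$, which is exactly $\hat V=\set{\check u\mid u\in V}$, and $u\mapsto\check u$ is definable in $V$ — the same computation as in Theorem~\ref{Theorem.V-bi-interpretable-V[A]-AAvec}, but now lacking the enumeration predicate $\AAvec$. So $\ZFC$ is semi-bi-interpretable with both urelement theories; the reverse round trip $\mathcal N\to V^{\mathcal N}\to V^{\mathcal N}\[V^{\mathcal N}]$ is not even correct up to isomorphism in general, and where it is, the isomorphism cannot be definable because of the homogeneity automorphisms permuting the atoms — and that is exactly the obstruction the next part makes precise.

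\emph{The invariant.} Call $\mathcal M$ \emph{parametrically definably rigid} if for no finite tuple $\bar p$ from $\mathcal M$ does $\langle\mathcal M,\bar p\rangle$ carry a nontrivial definable automorphism, and \emph{abundantly nonrigid} if for every finite tuple $\bar p$ there is a nontrivial automorphism of $\mathcal M$ fixing $\bar p$ pointwise and definable from some finite tuple of parameters. By the $\in$-induction argument of the rigidity subsection, every model of $\ZFC$ — hence, taking pure sets, every model of $\ZFCU+{}$``no urelements'' — is parametrically definably rigid. Meanwhile every model of $\ZFCU$ whose urelements include an infinite set — in particular $V\[\omega]$ computed inside a model of $\ZFC$, and \emph{a fortiori} every model of $\ZFCU+{}$``proper class of urelements'' — is abundantly nonrigid: given $\bar p$, choose atoms $u\neq v$ not occurring in the transitive closure of any entry of $\bar p$ and let $\pi$ swap $u$ and $v$. (Note a model of $\ZFCU$ with exactly two urelements is \emph{not} abundantly nonrigid, since naming both atoms kills all automorphisms; this is why one must pass to models with infinitely many urelements.)

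\emph{The key lemma and the three failures.} The lemma is: if $\mathcal M$ and $\mathcal N$ are parametrically bi-interpretable and $\mathcal N$ is abundantly nonrigid, then $\mathcal M$ is not parametrically definably rigid. Granting it, a putative parametric bi-interpretation of $\ZFC$ with $\ZFCU$ would make $V\[\omega]\models\ZFCU$ parametrically bi-interpretable, as a structure, with the $\ZFC$ model assigned to it — forcing a nontrivial definable automorphism of that model, contradiction. The same argument with a model of $\ZFCU+{}$``proper class of urelements'' in place of $V\[\omega]$ handles the pair $\ZFC$, $\ZFCU+{}$``proper class.'' For $\ZFCU$ against $\ZFCU+{}$``proper class of urelements,'' start instead from a model of $\ZFCU+{}$``no urelements,'' which is parametrically definably rigid, while its counterpart under the supposed bi-interpretation models $\ZFCU+{}$``proper class,'' hence is abundantly nonrigid, again contradicting the lemma. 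Combined with the semi-bi-interpretability already established, this proves the theorem.

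\emph{The main obstacle: proving the lemma.} Here is where the round-trip conditions and the parameter bookkeeping do the work. One transfers a witnessing automorphism $\pi$ of $\mathcal N$ through the interpretation of $\mathcal M$ inside $\mathcal N$ and then along the definable round-trip isomorphism $\mathcal M\cong\kappa(\iota(\mathcal M))$ back to $\mathcal M$; the output is definable from parameters because $\pi$, the interpretation formulas, and the round-trip isomorphisms all are. Two points require the hypotheses. First, for the transfer even to be well defined one must arrange that $\pi$ fixes every parameter occurring in either interpretation or either round-trip isomorphism — only finitely many of them — and this is precisely what abundant nonrigidity delivers (fresh urelements dodging any prescribed finite tuple); if $\pi$ moved the interpretation parameters $\bar b$ it would carry $\kappa(\mathcal N)$ to a possibly different structure $\kappa^{\pi(\bar b)}(\mathcal N)$ and the argument would collapse. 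Second, one must check the transferred map is nontrivial: if it were trivial on the interpreted copy of $\mathcal M$ sitting inside $\mathcal N$, then running the \emph{other} round trip to recover $\mathcal N$ from that copy would make $\pi$ itself trivial, a contradiction. The remaining steps — that an automorphism fixing the defining parameters of an interpreted structure induces an automorphism of it, and that a parameter-fixing definable isomorphism commutes with such an automorphism — are routine.
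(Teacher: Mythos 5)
Your overall strategy is the paper's: play the definable rigidity of $\ZFC$ models against the parameter-dodging automorphisms of urelement universes, transferring a suitable automorphism through the putative bi-interpretation to reach a contradiction. Your key lemma is a packaging of exactly that transfer (including the nontriviality check via the round-trip isomorphism, which matches the paper's final step with $\tau$), and your treatment of mutual interpretability and semi-bi-interpretability is the paper's. But there is a genuine error in the invariant you deploy for the pair $\ZFC$ versus $\ZFCU$. You claim that every model of $\ZFCU$ with infinitely many urelements---in particular $V\[\omega]$---is abundantly nonrigid, on the grounds that for any finite tuple $\bar p$ one can find two atoms outside the supports of its entries. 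This fails whenever the urelements form a set: a single parameter, namely a well-ordering of the set of all atoms (available in $V\[\omega]$ by choice), has every atom in its transitive closure, and any automorphism fixing it must fix every atom (there can be no least moved one) and hence, by $\in$-induction, fix everything. So $V\[\omega]$ is not abundantly nonrigid, and your derived conclusion that it is not parametrically bi-interpretable with any $\ZFC$ model directly contradicts Theorem~\ref{Theorem.Set-urelements-bi-interpretation}, which shows that $\<V,\in>$ and $\<V\[A],\barin,\A>$ \emph{are} parametrically bi-interpretable whenever $A$ is a set. You noticed this obstruction for two urelements but missed that it persists for any set of them; the true dividing line is set versus proper class, since only a proper class of atoms guarantees fresh atoms outside the support of every set parameter.

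The repair is immediate and is what the paper does: to refute a bi-interpretation of the theories $\ZFC$ and $\ZFCU$, instantiate it at a model of $\ZFCU$ with a \emph{proper class} of urelements (such a model satisfies $\ZFCU$, so the theory-level bi-interpretation would apply to it), rather than at $V\[\omega]$. With that substitution your three case analyses collapse to the single fact your lemma proves---no model with a proper class of urelements is parametrically bi-interpretable with any model of $\ZFC$---which is precisely how the paper organizes the argument. The remainder of your proposal is sound.
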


\begin{proof}
The urelement theory in case (3) is expressed using the urelement predicate $\A$. We have already provided the mutual interpretations, since from any model $V\satisfies\ZFC$ we can interpret the model $V\[A]$ for any class $A$, and conversely in any model of \ZFCU\ the class of pure sets is a model of \ZFC. This mutual interpretation is actually a semi-bi-interpretation, since in any case, $V$ can see how it is copied into the pure sets of $V\[A]$ by the map $u\mapsto\check u$. What is not generally possible for $V\[A]$, with only the urelement predicate $\A$ and not urelement enumeration predicate $\AAvec$, is to define exactly how it arises from its pure sets.

Let us prove first that no model of theory (3) is bi-interpretable with a model of \ZFC, even allowing parameters in the interpretations. Suppose toward contradiction that a model $M$ of \ZFCU\ with a proper class of urelements is bi-interpretable with a model $N$ of \ZFC. Thus, $N$ is parametrically definably interpreted in $M$, and there is a copy $\Mbar$ of $M$ definably interpreted in $N$, such that $M$ can parametrically define an isomorphism $\tau:M\cong\Mbar$, and so there is a copy $\Nbar$ of $N$ definably interpreted in $\Mbar$, such that $N$ can parametrically define an isomorphism $\sigma:N\cong\Nbar$.

Since $M$ is a model of \ZFCU\ with a proper class of urelements, we can define an automorphism $\pi:M\cong M$ that fixes all the parameters used in defining the interpretations and the maps $\tau$ and $\sigma$, but which moves some object $u$ to $\pi(u)\neq u$. Since $\pi$ fixes the parameters used in the interpretation, it follows that $\pi$ fixes $N$, $\Mbar$, $\Nbar$, $\tau$, and $\sigma$ as definable classes in $M$.

Since $\pi$ fixes $N$ as a class, and it fixes the interpretations of the relations of $N$ as classes, it follows that $\pi\restrict N$ is an automorphism of $N$ as a \ZFC\ model. Furthermore, because $N$ can define $\Mbar$, which can define its version of $\pi$ on $\Nbar$, which $N$ can pull back from $\Nbar$ to $N$, it follows that $\pi\restrict N$ would be a definable automorphism of $N$ in $N$. But \ZFC\ is definably rigid, and so $\pi$ must fix the elements of $N$ pointwise. Consequently, it also fixes the elements of $\Mbar$ pointwise. But now, since $\pi$ also fixes $\tau$ as a class, it follows that if $\tau$ takes $u$ to $\bar u$ in $\Mbar$, then it must take $\pi(u)$ to $\pi(\bar u)$, which is the same as $\bar u$, since this is in $\Mbar$. Thus, $\tau$ takes two different objects $u\neq\pi(u)$ of $M$ to the same object $\bar u$ in $\Mbar$, contradicting the assumption that it was an isomorphism.

To prove the theorem, now, suppose that \ZFC\ were bi-interpretable with one of the other theories, allowing parameters. Since every model of \ZFCU\ with a proper class of atoms is a model of both theories (2) and (3), it would meant that every model of theory (3) was parametrically bi-interpretable with a model of \ZFC, which we just proved is impossible. So \ZFC\ is not parametrically bi-interpretable with either theory (2) or theory (3). 

It follows also that theories (2) and (3) are not parametrically bi-interpretable, for every model of \ZFC\ is also a model of \ZFCU, and we've argued that no such model is parametrically bi-interpretable with a model of theory (3). 
%
\end{proof}

The proof relied on the fact that \ZFC\ models are definably rigid, but models of \ZFCU\ with a proper class of urelements have lots of parametrically definable automorphisms, fixing any desired finite list of parameters. In this situation, there can be no bi-interpretation of models. The basic lesson is that there is a fundamental difference for interpretative power between the theories $\ZFCU$ and $\ZFCUvec$, between having the urelements merely as a class $\A$ versus having them as an explicit enumeration $\AAvec$ by a class of pure sets. One can expect to achieve bi-interpretation of urelement set theory with urelement-free set theory only in the latter case, and otherwise it is the urelement-free theory with the stronger interpretative power, since $V$ sees how it is copied to the pure sets of $V\[A]$, but $\<V\[A],\in,\A>$ cannot in general see how it arises from its pure sets.\goodbreak

\section{Reflection}

\begin{wrapfigure}{r}{.2\textwidth}\vskip-2ex\hfill
\begin{tikzpicture}
\draw[fill=Yellow!40] (0,0) -- (-1,4) to node[below,scale=.8] {$V$} (1,4) -- cycle;
\draw[fill=Yellow] (0,0) -- (-.5,2)  to node[below,scale=.7] {$V_\lambda$} (.5,2) node[right,scale=.7] {$\lambda$} -- cycle;
\end{tikzpicture}
\end{wrapfigure}
Let us recall the first-order reflection phenomenon in set theory. The \Levy-Montague reflection theorem of \ZF\ set theory asserts that for any formula $\varphi(x)$, there is an ordinal $\lambda$ such that $\varphi$ is absolute between $V_\lambda$ and $V$. A somewhat more refined version asserts of every meta-theoretically finite number $n$ that there is an ordinal $\lambda$ such that all $\Sigma_n$ formulas are absolute between $V_\lambda$ and $V$. Indeed, there is a definable closed unbounded class $C$ of such ordinals $\lambda$, the club of \emph{$\Sigma_n$-correct} cardinals. This is a theorem scheme, with a separate statement for each number~$n$. The reflection theorem is equivalent over the rest of \ZFC\ to the collection axiom (hence to replacement over Zermelo set theory), since the existence of sufficiently large reflecting ordinals $\lambda$ provides $V_\lambda$ as a suitable collecting set for the formula asserting that indeed the witnesses exist.

In the urelement context, when there are a proper class of urelements, then we should not generally expect to reflect truths from $V(A)$ specifically to the class $V_\lambda(A)$, since there will be sets of urelements $w\of A$ in $V(A)$ of cardinality larger than $\lambda$, which having rank $1$ will be in $V_\lambda(A)$, but such $w$ are not equinumerous with any ordinal in $V_\lambda(A)$. This would be a violation of reflection between $V_\lambda(A)$ and $V(A)$ for the statement $\varphi=$``every set of urelements is bijective with an ordinal.'' What we should want to do with reflection, rather than reflecting specifically to some rank-initial-segment class $V_\lambda(A)$, is to reflect truth to some transitive set.

\begin{definition}
The first-order reflection principle is the scheme of assertions that for every set $p$ and every particular formula $\varphi$ in the first-order language of urelement set theory there is a transitive set $v$ containing $p$ such that $\varphi$ is absolute between $v$ and $V(A)$.
\end{definition}

\begin{wrapfigure}[11]{r}{.34\textwidth}\vskip-3ex\hfill
\begin{tikzpicture}
\draw[fill=Orange!50] (0,0) -- (-1,4) -- (2,4) node[below right,scale=.8,align=center] {$V(A)$} -- (1,0) -- cycle;
\draw[fill=Yellow!50] (0,0) -- (-1,4) to node[below,scale=.8] {$V$} (1,4) -- cycle;
\draw (0,3) node[scale=.6,align=center] {Pure\\ sets};
\draw[fill=Yellow] (0,0) -- (-.5,2) node[left,scale=.7] {$\alpha$} to node[below,scale=.7] {$V_\alpha$} (.5,2) -- cycle;
\draw[fill=Orange] (0,0) -- (.5,2) -- (1.5,2) node[below right,scale=.7] {$V_\alpha(A)$} -- (1,0) -- cycle;
\draw[DarkRed,dotted,line width=2pt,line cap=round, dash pattern=on 0pt off \pgflinewidth] (0,0) to node[below,scale=.6,align=center] {Atoms $A$} (1,0);
\end{tikzpicture}
\end{wrapfigure}
If there is merely a set of urelements, then we can stratify the full universe $V(A)$ by the rank hierarchy $V_\alpha(A)$, and each of these is a set. This provides a set-like continuous transfinite tower $V_\alpha(A)$, and this is enough to run the usual \ZFC\ proof of the reflection theorem, where we shall be able to reflect any $\varphi$ from $V(A)$ to some $v=V_\alpha(A)$. Similarly, in a model of $\ZFCUvec+``\Ord$ many atoms'', we can similarly stratify the universe in a set-like hierarchy, using \mbox{$V_{\alpha+1}(\AAvec\restrict\alpha)$}, where $\AAvec\restrict \alpha$ restricts the urelement enumeration to the first $\alpha$ many elements, and taking unions at limit ordinals for continuity. This continuous cumulative hierarchy is again enough to prove the reflection theorem by the usual argument, reflecting any $\varphi$ from $V(A)$ to some $v=\Union_{\alpha<\lambda}V_{\alpha+1}(\AAvec\restrict\alpha)$.

But in fact, we should like to prove that we get the first-order reflection theorem in \ZFCU\ generally, making no assumption on the number or kind of urelements. Let's begin by establishing the following duplication and homogeneity lemmas, showing that over some fixed set of urelements, all additional sets of urelements of the same cardinality look exactly alike.

\begin{lemma}[urelement duplication]\label{Lemma.Duplication}
Assume \ZFU\ plus every set of urelements is well-orderable. Then there is a set $u$, such that every set of urelements $w$ disjoint from $u$ can be duplicated, that is, $w$ is equinumerous with some set of urelements $w'$ disjoint from both $u$ and $w$.
\end{lemma}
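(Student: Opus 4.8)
The plan is to build $u$ by peeling off from the class $A$ of all urelements a short tower of sets of urelements, each obstructing duplication \emph{within the remaining urelements}, and then to argue the tower terminates because the cardinalities of the peeled-off sets strictly decrease. For a class $C\of A$ of urelements, call a set of urelements $w\of C$ \emph{$C$-isolated} if there is no set of urelements $w'\of C$ with $w'\intersect w=\emptyset$ and $w'\simeq w$. The one quantitative fact I need is: $(\ast)$ if $w\of C$ is $C$-isolated and $v\of C$ is a set of urelements with $v\intersect w=\emptyset$, then $\Card v<\Card w$. Indeed, since every set of urelements is well-orderable, if $\Card v\ge\Card w$ then $v$ has a subset $v_0$ of size exactly $\Card w$; but then $v_0\of C$, $v_0\intersect w=\emptyset$, and $v_0\simeq w$, contradicting $C$-isolation. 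This is the only use of the well-orderability hypothesis, and it is the crux: being impossible to duplicate is really a bound on cardinalities.

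Next I claim there is a set of urelements $u$ for which $A\setminus u$ has no $(A\setminus u)$-isolated set. Suppose not. Then there are cardinals of the form $\Card w$ with $w$ an $(A\setminus p)$-isolated set and $p$ a set of urelements; let $\kappa^\ast$ be the least such cardinal, witnessed by $p$ and an $(A\setminus p)$-isolated $w$ with $\Card w=\kappa^\ast$. Applying the (negated) claim to the set of urelements $p'=p\union w$, there is an $(A\setminus p')$-isolated set $w'$. Now $w'\of A\setminus p'=(A\setminus p)\setminus w$, so $w'\of A\setminus p$ and $w'\intersect w=\emptyset$; hence $(\ast)$, applied with $C=A\setminus p$ and the isolated set $w$, gives $\Card{w'}<\Card w=\kappa^\ast$, while minimality of $\kappa^\ast$ gives $\Card{w'}\ge\kappa^\ast$, a contradiction. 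Note the contradiction comes from a single least instance, so the argument needs no iterated choices (no appeal to dependent choice), only the well-orderability of sets of urelements.

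Fixing $u$ as in the claim finishes the proof: if $w$ is any set of urelements disjoint from $u$, then $w\of A\setminus u$, and since $A\setminus u$ has no $(A\setminus u)$-isolated set, $w$ is not $(A\setminus u)$-isolated, so some set of urelements $w'\of A\setminus u$ satisfies $w'\intersect w=\emptyset$ and $w'\simeq w$; this $w'$ is disjoint from both $u$ and $w$, so $w$ is duplicated, as required. The degenerate cases fold in automatically: if there is no $A$-isolated set at all, the claim holds with $u=\emptyset$; if the urelements form a set, one may also simply take $u$ to be all of them.

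The main obstacle is recognizing that one cannot just take $u$ to be a single ``largest'' set of urelements admitting no disjoint copy: the models $\Wbar$ and $Y$ of Section~1 show that after removing such a set one can be left with a further, smaller obstruction that must also be absorbed --- for instance, adjoining countably many extra urelements as a set on top of an $\omega_1$-sized isolated set. Fact $(\ast)$ is what guarantees this absorption process has finite length, and packaging it through the least-cardinal argument above is what keeps the whole proof choice-free beyond the stated hypothesis.
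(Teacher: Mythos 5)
Your proof is correct. It is a near relative of the paper's argument, but the minimization is set up differently, and the difference has some substance. The paper chooses $u$ so as to minimize the \emph{supremum} of the cardinalities of the sets of urelements disjoint from $u$; to get a strict decrease and hence a contradiction it must know that this supremum is actually attained, and that is precisely what the collection axiom is invoked for there (gathering witnesses for a set of realized cardinals and taking the union). You instead take a least counterexample on the cardinality of an unduplicable (``isolated'') set over all choices of base set $p$, and your fact $(\ast)$ --- that an isolated set strictly bounds the size of every set of urelements disjoint from it among the remaining urelements --- delivers the strict decrease directly, with no supremum to realize. Consequently your argument uses only well-orderability of sets of urelements, separation, and the least-ordinal principle, and never touches collection, whereas the paper's proof does; both share the essential insight that, under well-orderability, failure of duplication is really a cardinality bound. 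One stylistic remark: your opening plan speaks of peeling off a terminating tower, but what you actually execute (correctly, as you yourself note) is a single least-counterexample argument, so the tower imagery could simply be dropped.
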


\begin{proof}
Assume \ZFU\ plus every set of urelements is wellorderable, and let $u$ be any set of urelements for which the supremum of the sizes of the sets of urelements $w$ disjoint from $u$ is as small as possible. (Perhaps this supremum is unbounded in the cardinals, and that will be fine.) Notice that if every cardinal $\gamma$ in some set of cardinals is realized by a set of urelements disjoint from $u$, then we can collect such sets together and thereby realize a cardinal at least as great as each of them. It follows by the choice of $u$ that if $w$ is a set of urelements disjoint from $u$, then we must be able to realize this cardinal again disjoint from $u\union w$, for otherwise $u\union w$ would have served as a better choice than $u$. Thus, we have fulled the duplication property.
\end{proof}\goodbreak

The urelement duplication property will not necessarily hold over any set $u$, since in the model $Y$ discussed earlier, the uncountable set $A$ of urelements cannot be duplicated to a disjoint set of urelements of the same size, since all the remaining sets of urelements are countable subsets of $B$, and so in that model $Y$, we do not achieve duplication over $\emptyset$. In the model $Y$, however, we do achieve the duplication property over the set $u=A$, since any set $w$ disjoint from this $u$ can be duplicated to another such set $w'$ disjoint from $u\union w$. Moreover, the second author \cite{Yao2023:Dissertation} proves that it is  consistent with ZFU + $\omega$-DC that the duplication property holds over no set of urelements.

\begin{lemma}[urelement duplication$\to$homogeneity]\label{Lemma.Homogeneity}
Assume \ZFU. If all sets of urelements disjoint from $u$ can be duplicated, then any two equinumerous such sets $w$ and $w'$ are automorphic over $u$---there is an automorphism of the universe $V(A)$ swapping $w$ with $w'$ and fixing every element of $u$.
\end{lemma}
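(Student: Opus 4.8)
The plan is to produce the automorphism in two stages: first a permutation $\sigma$ of the class of urelements $A$ that fixes every element of $u$ and carries $w$ onto $w'$, and then the canonical extension of $\sigma$ to an automorphism of the whole universe. For the second stage, exactly as in the rigidity discussion above, one extends $\sigma$ by $\in$-recursion, setting $\pi(a)=\sigma(a)$ for urelements $a$ and $\pi(y)=\set{\pi(x)\mid x\in y}$ for sets $y$; foundation makes this recursion well-defined, extensionality together with the surjectivity of $\sigma$ on the urelements makes $\pi$ a bijection of $V(A)$ with itself, and the definition visibly preserves both $\in$ and the urelement predicate, so $\pi$ is an automorphism of $\<V(A),\in,\A>$. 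Since $\sigma$ moves no element of $u$, neither does $\pi$; and $\pi''w=\sigma''w=w'$ and likewise $\pi''w'=\sigma''w'$, so it suffices to build $\sigma$ so that it interchanges $w$ and $w'$ while fixing $u$ pointwise.

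Constructing $\sigma$ is immediate when $w$ and $w'$ are disjoint: fix a bijection $h\colon w\to w'$ and let $\sigma$ act as $h$ on $w$, as $h^{-1}$ on $w'$, and as the identity everywhere else; this is a well-defined involution of $A$, it fixes $u$ because $u$ is disjoint from $w\cup w'$, and it swaps $w$ with $w'$ outright. The role of the duplication hypothesis is to reduce the general case to this one. Given equinumerous sets of urelements $w,w'$ disjoint from $u$ but possibly overlapping, I would apply duplication over $u$ to $w\cup w'$ to obtain a set $w_0$ of urelements disjoint from $u\cup w\cup w'$ and equinumerous with $w\cup w'$, and then use $w_0$ as fresh scratch space: disjoint transpositions slide $w\setminus w'$ and $w'\setminus w$ into separate parts of $w_0$, so that after conjugating by these moves the two sets become copies sitting over the common core $w\cap w'$ with disjoint remainders inside $w_0$, where they can be matched and the scratch moves undone. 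Equivalently phrased, duplication over $u$ is exactly what guarantees that the partial bijection $h\colon w\to w'$ can be extended to a permutation of all of $A$ that is the identity on $u$, since it supplies enough urelements outside $u\cup w\cup w'$ to absorb the mismatch between the complements $A\setminus(u\cup w)$ and $A\setminus(u\cup w')$.

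The only real subtlety, and the step I expect to require the most care, is this bookkeeping around the overlap $w\cap w'$: a single transposition swapping $w$ with a fresh disjoint copy also disturbs the urelements lying in $w\cap w'$, so one cannot simply compose ``swap $w$ with $w_0$'' with ``swap $w_0$ with $w'$'' and expect a clean outcome, and one must instead route $w\setminus w'$ and $w'\setminus w$ separately through $w_0$ while tracking where the core $w\cap w'$ is sent. Everything else is routine: the well-foundedness of the $\in$-recursion and the verification that $\pi$ is a structure automorphism follow the pattern of the automorphisms already exhibited in the discussions of rigidity and of the failure of collection, and the fact that duplication over $u$ yields a copy of $w\cup w'$ disjoint from the prescribed finite list of sets follows by the same combining-of-duplications argument used in the proof of the duplication lemma.
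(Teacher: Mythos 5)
Your first stage (extending a permutation of $A$ to an automorphism of $V(A)$ by $\in$-recursion) and your treatment of the disjoint case coincide with the paper's argument. The gap is in the overlapping case. You explicitly reject composing ``swap $w$ with a fresh copy'' with ``swap the fresh copy with $w'$'' on the grounds that it does not yield a clean outcome---but that composition is precisely the paper's proof, and it is correct for what the lemma actually needs. Note first that a literal two-sided swap is impossible in general: if $w$ is infinite and $w'=w\union\singleton{b_0}$ for a single new atom $b_0$, then any bijection $\sigma$ with $\sigma\image w=w'$ has $\sigma\image w'=w'\union\singleton{\sigma(b_0)}\fo w'$, which cannot equal $w$. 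So ``automorphic over $u$'' can only mean that some automorphism fixing $u$ pointwise carries $w$ onto $w'$ (its inverse then carries $w'$ onto $w$), and the composition $\pi_2\compose\pi_1$---where $\pi_1$ exchanges $w$ with a duplicate $w''$ disjoint from $u\union w\union w'$ of the same cardinality, and $\pi_2$ exchanges $w''$ with $w'$---delivers exactly this with no bookkeeping at all.

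Your substitute construction does not close the case you are worried about. Writing $a=w\setminus w'$, $b=w'\setminus w$, $c=w\intersect w'$, after sliding $a$ and $b$ into disjoint pieces $a_0,b_0$ of the scratch set $w_0$, the images of $w$ and $w'$ are $a_0\union c$ and $b_0\union c$: these still share the core $c$, so ``matching'' them is the original problem over again, and matching $a_0$ with $b_0$ while fixing $c$ requires $|a|=|b|$, which fails whenever $c$ is infinite and the symmetric difference is lopsided (for instance $a=\emptyset$ and $b$ a singleton). Your alternative phrasing---extend the bijection $h\colon w\to w'$ itself to a permutation of $A$ that is the identity on $u$---can be made to work: one must biject $D\sqcup b$ with $D\sqcup a$ where $D=A\setminus(u\union w\union w')$, and the duplicate $w_0\of D$ of $w\union w'$ absorbs the mismatch because $|w_0\sqcup b|=|a\sqcup b\sqcup(c\sqcup b)|=|a\sqcup b\sqcup(c\sqcup a)|=|w_0\sqcup a|$, using only $|c\sqcup b|=|w'|=|w|=|c\sqcup a|$ and no choice. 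But that computation is exactly the content you deferred as ``the step requiring the most care,'' and it is strictly harder than the two-step composition you set aside.
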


\begin{proof}
Assume \ZFU\ and sets of urelements disjoint from $u$ can be duplicated. Suppose that $w$ and $w'$ are equinumerous and disjoint from $u$. If $w$ and $w'$ are disjoint, then the equinumerosity produces a permutation $\pi$ of the union $v=w\union w'$. This permutation extends to a permutation $\pi:A\to A$ of the class of all urelements by fixing all other urelements---in particular, fixing every element of $u$---and this permutation generates an automorphism $\pi:V(A)\to V(A)$ showing that $w$ and $w'$ are automorphic over $u$, as desired.

But perhaps $w$ and $w'$ are not disjoint. By the duplication property, however, there is another set disjoint from $u\union w\union w'$  that is equinumerous with $w\union w'$, and in particular, there is a set $w''$ disjoint from $u\union w\union w'$ equinumerous with $w$. So $w$ is automorphic with $w''$ over $u$ by the previous paragraph, and also $w''$ is automorphic to $w'$. By composing these automorphisms, we see that $w$ is automorphic to $w'$.
\end{proof}

We cannot in general dispense with the fixed set $u$ in the homogeneity claim of lemma \ref{Lemma.Homogeneity}, for although as we have observed $V(B)$ is isomorphic to $V(C)$ whenever $B$ and $C$ are equinumerous classes of urelements, such an isomorphism is not necessarily realizable by an automorphism of $V(A)$. Indeed, it is not true in general in \ZFCU\ that whenever two sets of urelements are equinumerous, then they are automorphic, since perhaps there are exactly $\omega_1$ many urelements and $w$ is the set of all urelements, but $w'$ has all but one of them. These sets are equinumerous by a bijection in $V(w)$ and $V(w)$ is isomorphic to $V(w')$, but the sets are not automorphic by any automorphism of $V(w)$.\goodbreak

\begin{theorem}\label{Theorem.ZFCU-proves-first-order-reflection}\
 \begin{enumerate}
   \item \ZFCU\ proves the $\omega$-dependent choice scheme $\omega$-\DC. \footnote{See \cite{Schlutzenberg2021.MO387471:Does-the-axiom-scheme-of-collection-imply-DC?} and \cite{Yao2023:Dissertation} for alternative proofs of this result.}
   \item \ZFCU\ proves the first-order reflection principle scheme.
 \end{enumerate}
\end{theorem}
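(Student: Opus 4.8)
The plan is to establish~(1) first and then use it to prove~(2). For~(1), I would follow the familiar pattern by which the collection scheme proves the dependent-choice scheme. Given a definable class tree $T\subseteq V(A)^{<\omega}$ with parameter $p$ and no terminal nodes above a node $s_0$, the goal is to produce a transitive set $v$ with $p,s_0\in v$ inside which $T$ still has no terminal nodes above $s_0$; then, since $T\cap v$ is an honest set tree with no terminal nodes above $s_0$, the axiom of choice (well-order $v$ and repeatedly take the least one-step extension) yields a branch of length~$\omega$. To build $v$, one closes off: starting from a transitive $v_0\ni p,s_0$ and given a transitive $v_n$, the nodes of $T$ lying in $v_n$ form a set, each of them has a proper $T$-extension, so by collection there is a set $Z_n$ furnishing such an extension for each, and we set $v_{n+1}=\TC(v_n\cup Z_n)$; then $v=\Union_n v_n$ is as required. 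The complication specific to $\ZFCU$ is that the rank hierarchy $V_\alpha(A)$ is not set-like when $A$ is a proper class, so we cannot take $v_{n+1}$ to be a canonical $V_{\gamma_n}(A)$ as in the $\ZF$ proof of reflection; this is where Lemma~\ref{Lemma.Duplication} enters — since $\ZFCU$ includes choice, every set of urelements is well-orderable, so we may fix a set $u$ over which duplication holds, and Lemma~\ref{Lemma.Homogeneity} then lets us arrange that the extensions needed at each stage have urelement support inside $\supp(v_n)\cup u$ together with a duplicate of bounded size, keeping the construction inside the two-dimensional set-like hierarchy $V_\alpha(w)$ and allowing the sequence $\<v_n : n\in\omega>$ to be formed.

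For~(2), fix a formula $\varphi$ and a set $p$; we want a transitive set $v\ni p$ with $\varphi$ absolute between $v$ and $V(A)$. Using Lemma~\ref{Lemma.Duplication}, fix a set $u$ of urelements over which every disjoint set of urelements can be duplicated, and let $v_0$ be a transitive set with $p,u\in v_0$. I would build an increasing chain of transitive sets $v_0\subseteq v_1\subseteq\cdots$ so that at stage $n$, for each of the finitely many existential subformulas $\exists y\,\psi(\vec z,y)$ of $\varphi$ and each tuple $\vec x$ from $v_n$ with $V(A)\models\exists y\,\psi(\vec x,y)$, there is an honest witness placed into $v_{n+1}$. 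Since $v_n^{<\omega}$ is a set, collection yields a set $Z_n$ containing, for every such $\vec x$ and subformula, an actual witness; using Lemma~\ref{Lemma.Homogeneity} (valid because $\supp(v_n)\supseteq u$) one may moreover take these witnesses to have urelement support contained in $\supp(v_n)$ plus a duplicate of bounded size disjoint from it, so the construction does not escape the set-like hierarchy. Set $v_{n+1}=\TC(v_n\cup Z_n)$. The passage $v_n\mapsto v_{n+1}$ is a definable class relation that always succeeds, so $\omega$-\DC\ from part~(1) produces the sequence $\<v_n : n\in\omega>$, and $v=\Union_n v_n$ is then a transitive set with $p\in v$, closed under witnesses for every subformula of $\varphi$.

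The reflection is then the usual Tarski--Vaught induction on the subformulas of $\varphi$: absoluteness between $v$ and $V(A)$ holds for atomic formulas (noting that the urelement predicate $\A$ is absolute between transitive sets containing $\emptyset$), is preserved by the Boolean connectives, and at an existential step is exactly the closure property just arranged. I expect the main obstacle to be precisely the urelement-support bookkeeping forced by the non-set-like rank hierarchy: in plain $\ZF$ one simply reflects $\varphi$ down to some $V_\lambda$, but with a proper class of urelements a single existential witness can demand ever-new atoms, so the naive construction leaves the realm of sets entirely. The duplication and homogeneity lemmas are what tame this, by allowing whatever fresh atoms a witness happens to use to be replaced — via an automorphism fixing $u$ together with the relevant parameters — by a duplicate of controlled size, so that everything stays within the sets $V_\alpha(w)$. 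A secondary subtlety is that the closing-off makes a non-canonical choice at each of the $\omega$ stages; this is harmless for~(2), where $\omega$-\DC\ from~(1) legitimizes forming the recursive sequence, but for~(1) itself one cannot invoke $\omega$-\DC\ and must instead lean on the homogeneity lemma to make the closing-off sufficiently canonical — which is the reason~(1) is proved first.
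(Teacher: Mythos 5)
Your part (2) is essentially the paper's argument (use collection to define a witness-set relation, thread it with $\omega$-\DC, and conclude by Tarski--Vaught), and your overall architecture---prove $\omega$-\DC\ first, then derive reflection from it---matches the paper exactly. The problem is in part (1). Your construction of $v=\Union_n v_n$ via $v_{n+1}=\TC(v_n\union Z_n)$, where $Z_n$ is ``a set furnished by collection,'' makes a non-canonical choice at each of $\omega$ stages; forming the sequence $\<v_n\mid n\in\omega>$ is therefore itself an instance of the $\omega$-\DC\ scheme, the very statement being proved. You flag this circularity yourself, but the proposed repair---``lean on the homogeneity lemma to make the closing-off sufficiently canonical''---is not carried out and does not work as stated: homogeneity tells you that the various candidate witness configurations are automorphic over $u$, but an automorphism orbit is not a canonical choice of representative, and since the atoms are indiscernible over $u$ there is no definable way to pick out the particular fresh urelements that $Z_n$ is to use.

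The paper's resolution is structurally different, and it is the key missing idea: recurse not on the sets $v_n$ but on their canonical numerical invariants. One defines an increasing sequence of cardinals $\lambda_0<\lambda_1<\cdots$ where $\lambda_{n+1}$ is the \emph{least} cardinal such that every relevant $x\in X\intersect V_{\lambda_n}(w)$, for $w\fo u$ of size less than $\lambda_n$, has an $R$-successor in some $V_{\lambda_{n+1}}(w^+)$ with $w^+$ of size less than $\lambda_{n+1}$. Because each value is the least cardinal with a definable property, this $\omega$-recursion on ordinals needs no choice at all; homogeneity over $u$ enters only to show that $\lambda_{n+1}$ exists, since up to automorphism there are only a set of pairs $(x,w)$ to consider once the cardinality of $w$ beyond $u$ is fixed. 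The single non-canonical object---a set $\bar w\fo u$ of urelements large enough to absorb all supports of size less than $\lambda=\sup_n\lambda_n$---is then obtained by one application of collection \emph{after} the recursion is finished, and $v=\Union_{w\of\bar w,\,|w|<\lambda}V_\lambda(w)$ is closed under $R$-successors, after which the axiom of choice threads $R$ through $v$. If you reorganize your part (1) along these lines it goes through; part (2) then stands essentially as written (your extra support bookkeeping there is harmless but unnecessary, since $\omega$-\DC\ already legitimizes that recursion).
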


\begin{proof}
Assume \ZFCU\ holds in $V(A)$, where $V$ is the class of pure sets and $A$ is the class of atoms. Let us first prove the $\omega$-\DC\ scheme. Suppose that $R$ is a definable class relation on a definable class $X$, such that every $x\in X$ has at least one $y\in X$ such that $x\mathrel{R}y$. We want to prove that there is an $\omega$-sequence threading the relation. Fix a parameter $p$ that is sufficient for the definitions of $X$ and $R$.

We define a certain increasing sequence of cardinals $\lambda_0<\lambda_1<\lambda_2<\cdots$ by recursion. Fix a set $u$ as in lemmas \ref{Lemma.Duplication} and \ref{Lemma.Homogeneity}, so that every pair of equinumerous sets of urelements outside $u$ are automorphic over $u$. We may assume that $p$ has support in $u$. Let $\lambda_0$ be the least cardinal such that $X$ has an element inside $V_{\lambda_0}(w)$ for some set $w$ extending $u$ of size less than $\lambda_0$. If $\lambda_n$ is defined, let $\lambda_{n+1}$ be least above $\lambda_n$ such that whenever $x\in X\intersect V_{\lambda_n}(w)$, where $w$ is a set of urelements extending $u$ of size less than $\lambda_n$, and there is a $y$ with $x\mathrel{R} y$, then there is such a $y$ inside $V_{\lambda_{n+1}}(w^+)$, where $w^+$ is an extension of $w$ of size less than $\lambda_{n+1}$. This cardinal exists since all such $V_{\lambda_n}(w)$ are automorphic in $V(A)$, once the number of urelements beyond $u$ is fixed, and so there are only a set of possible $w$ to consider, and consequently a set of possible $x$ to consider. Now let $\lambda=\sup_n\lambda_n$.

By the collection axiom, we can consider the possible cardinalities of sets of urelements and then take the union of a family of such sets realizing all possible cardinalities below $\lambda$, and thereby find a set $\bar w$ of urelements extending $u$, which is at least as large as any set of urelements of size less than $\lambda$. Let $v=\Union_{w\of \bar w,\,|w|<\lambda} V_\lambda(w)$, where we range over all $w\of \bar w$ of size less than $\lambda$. Notice that if $x\in X\intersect v$, then $x$ is in some $V_{\lambda_n}(w)$ for some $w\of\bar w$ of size less than $\lambda_n$, and consequently there is $y\in V_{\lambda_{n+1}}(w')$ for some such $w'$ of size less than $\lambda_{n+1}$, and by homogeneity we may assume $w'\of\bar w$. In short, $v$ has $R$-successors for each of its elements in $X$.

It now follows easily by the axiom of choice that we may thread $R$. We simply well-order the set $v$ and then choose the least successor: fix any $x_0\in X\intersect v$, and let $x_{n+1}$ be least in $v$ such that $x_n\mathrel{R}x_{n+1}$. So $\<x_n\mid n<\omega>$ threads the relation and we have proved the $\omega$-\DC\ scheme.

Next, we observe that the $\omega$-\DC\ scheme suffices to establish every instance of the first-order reflection principle. Fix any formula $\varphi$ in the language of urelement set theory, using $\in$ and the urelement predicate $\A$, and list the subformulas $\varphi_0,\ldots,\varphi_k$. Define $u\mathrel{R} v$ for sets $u,v$ if for every $\vec x$ from $u$, if there is a witness $y$ in $V(A)$ such that $\varphi_i(\vec x,y)$, then there is such a $y\in v$. In other words, the set $v$ contains witnesses for the objects in $u$, if there are any such objects. By the collection axiom scheme, every set $u$ is related to some set $v$, and so by the principle $\omega$-\DC, there is a threading sequence $\<u_n\mid n<\omega>$, such that $u_n\mathrel{R}u_{n+1}$ for every $n$. It follows that the union set $u=\Union_n u_n$ is closed under witnesses for all the $\varphi_i$, and it follows by the Tarski-Vaught criterion that $\varphi$ and all subformulas are absolute between $u$ and $V(A)$, as desired. And we could have accommodated any particular set simply by starting with a sufficient $u_0$.
\end{proof}

\subsection{Reflection without choice}

Perhaps the reader is surprised that our proof of first-order reflection for urelement set theory made use of the axiom of choice, since in the \ZF\ context, one proves the \Levy-Montague reflection theorem without any appeal to the axiom of choice. Does the reflection principle hold in \ZFU? We don't yet know the full answer, but we can prove reflection in several weaker contexts than full \ZFCU. So let us take a small detour in this subsection to do so.

\begin{theorem}\label{Theorem.ZFU+duplication+homogeneity-implies-reflection}
 Assume \ZFU. If there is a set $u$ such that
 \begin{enumerate}
   \item every set of urelements is equinumerous with some set in $V(u)$ and
   \item homogeneity holds over $u$---all equinumerous sets of urelements disjoint from $u$ are automorphic,
 \end{enumerate}
then the first-order reflection principle holds.
\end{theorem}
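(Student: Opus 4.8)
The plan is to adapt the construction in the proof of Theorem~\ref{Theorem.ZFCU-proves-first-order-reflection}, but to build the reflecting transitive set \emph{directly and canonically}, thereby avoiding the appeal to the axiom of choice made there when threading the witness relation via $\omega$-\DC. Fix a formula $\varphi$ with subformulas $\varphi_0,\dots,\varphi_k$ and a set $p$ to be accommodated; we may take $u$ to be a set of urelements, since replacing it by $u\cap A$ changes neither hypothesis. Mirroring the recursion of Theorem~\ref{Theorem.ZFCU-proves-first-order-reflection}, define an increasing sequence of cardinals $\lambda_0<\lambda_1<\cdots$, with $\lambda_0$ large enough that $p\in V_{\lambda_0}(\supp(p))$ and $|\supp(p)|,|u|<\lambda_0$, and with $\lambda_{n+1}$ least above $\lambda_n$ such that for every set of urelements $w\supseteq u$ of size less than $\lambda_n$, every tuple $\vec x$ from $V_{\lambda_n}(w)$, and every $i\le k$, if $\varphi_i(\vec x,y)$ holds for some $y$ in $V(A)$ then it holds for some $y$ of rank below $\lambda_{n+1}$ with $|\supp(y)|<\lambda_{n+1}$. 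Put $\lambda=\sup_n\lambda_n$, a limit cardinal.

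The existence of $\lambda_{n+1}$ is where hypothesis~(1) does its work, substituting for the use of well-orderability of sets of urelements in Theorem~\ref{Theorem.ZFCU-proves-first-order-reflection}. By hypothesis~(2), if $w_1,w_2\supseteq u$ are equinumerous then an automorphism of $V(A)$ fixing $u$ carries $V_{\lambda_n}(w_1)$ onto $V_{\lambda_n}(w_2)$, transporting the ``pattern of witness demands'' over one to that over the other; hence the supremum defining $\lambda_{n+1}$ need only range over equinumerosity classes of such $w$, and by hypothesis~(1) each such class is represented by a set in $V_{\lambda_n}(u)$, which is a set. One therefore defines a function on the set $V_{\lambda_n}(u)$, sending $s$ to the least bound that works for all $w$ with $w\setminus u\simeq s$ (and to $0$ if no such $w$ exists)---well defined precisely because homogeneity over $u$ makes the value independent of the choice of $w$---and applies replacement to its range. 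No instance of choice is used.

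Next, using the collection axiom together with hypothesis~(1), fix a sufficiently rich set $H$ of sets of urelements, each of size less than $\lambda$, which contains $\supp(p)$, is closed under subsets, and is \emph{roomy}: for every $w\in H$ and every set of urelements $t$ of size less than $\lambda$ there is $w'\in H$ with $w\subseteq w'$ such that $w'\setminus w$ contains a copy of $t$, which may be taken disjoint from any prescribed set of size less than $\lambda$. Collection over $V_\lambda(u)$ produces, for each relevant size, actual sets of urelements realizing it, and homogeneity over $u$ then shuffles these into the required disjoint position. Set $v=\bigcup_{w\in H}V_\lambda(w)$; since $v\subseteq V_\lambda(\bigcup H)$ this is a transitive set, and $p\in V_\lambda(\supp(p))\subseteq v$. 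The point of restricting to $w\in H$---rather than taking the whole of $V_\lambda(\bigcup H)$---is that $v$ then omits the large enumerations (of $\bigcup H$, say) that would otherwise spoil reflection, while still seeing plenty of urelements. It remains to verify the Tarski--Vaught criterion: given $\vec x\in v$, say $\vec x\in V_{\lambda_n}(w)$ with $w\in H$ and $|w\cup u|<\lambda_n$, and $\varphi_i(\vec x,y)$ in $V(A)$, the recursion yields a witness $y$ of rank below $\lambda_{n+1}$ whose support extends $w\cup u$ by a set $t$ of size below $\lambda_{n+1}\le\lambda$; choosing $w'\in H$ with $w\subseteq w'$ and a copy of $t$ inside $w'\setminus w$ avoiding $\supp(y)$, and letting $\pi$ be the ``swap-and-fix-everything-else'' automorphism built as in Lemma~\ref{Lemma.Homogeneity} that fixes $u\cup w$ pointwise and carries $t$ into $w'$, we get $\pi(\vec x)=\vec x$, hence $\varphi_i(\vec x,\pi(y))$, with $\pi(y)\in V_\lambda(w')\subseteq v$. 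Thus $\varphi$ and all its subformulas are absolute between $v$ and $V(A)$, and we could have begun with any prescribed $p$.

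The step I expect to be the main obstacle is the cardinal bookkeeping carried out without the axiom of choice: showing that a set $H$ with the roominess property actually exists using only collection and hypothesis~(1), when sets of urelements need not be well-orderable, and confirming that the restricted union $v$ is simultaneously a set and closed under witnesses. That \emph{some} restriction on the supports is genuinely necessary---not merely a convenience---is already visible in the model $Y$ of Section~1: there $V_\lambda(\bar w)$ for any set of urelements $\bar w$ contains an enumeration of all of $\bar w$, which falsifies the reflected instance ``there is an urelement outside the support of $\vec x$.'' So the construction must be arranged, as above, so that $v$ contains many urelements yet omits every single object whose support is all of them.
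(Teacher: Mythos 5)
Your proposal is correct and follows essentially the same route as the paper's proof: the same recursion producing cardinals $\lambda_n$, with hypothesis~(1) used to index the equinumerosity classes of supports by sets in $V_{\lambda_n}(u)$ and homogeneity used to collapse the supremum to a set via replacement, followed by a collection argument producing a large set of urelements and a reflecting transitive set $v=\bigcup_w V_\lambda(w)$ over small supports $w$, verified by Tarski--Vaught with support-moving automorphisms. The only caveat is that expressions such as $|w|<\lambda_n$ or $|\supp(y)|<\lambda_{n+1}$ must be read throughout as ``equinumerous with a set in $V_{\lambda_n}(u)$'' (as the paper phrases it), since without choice a set of urelements need not have an ordinal cardinality; your own remarks about representing each equinumerosity class by a set in $V_{\lambda_n}(u)$ show you intend exactly this.
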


In light of lemma \ref{Lemma.Homogeneity}, which shows that urelement duplication implies homogeneity, it suffices to have urelement duplication over $u$  in place of homogeneity in this theorem. The first hypothesis can be equivalently stated as: $V(u)$ realizes all cardinalities---every set is equinumerous with a set in $V(u)$. As we mentioned earlier, such a hypothesis is relevant for philosophical issues in urelement set theory as a foundation of mathematics, because $V(u)$ would have isomorphic copies of every possible mathematical structure.

Notice that the two hypotheses of the theorem hold in the theory $\ZFU+``$there are $\Ord$ many atoms'' and also in ``there are $V$ many atoms'' and ``there is a set $u$ such that there are $V(u)$ many atoms'' and these theories arise can each arise with the pure sets being an arbitrary model of \ZF, with very bad failures of the axiom of choice.

\begin{proof}
Assume \ZFU\ and fix the set $u$ for which every set of urelements is equinumerous with a set in $V(u)$ and the homogeneity principle holds over $u$. We proceed similarly to the argument of theorem \ref{Theorem.ZFCU-proves-first-order-reflection}, but avoiding the need for the axiom of choice. Fix any set $p$ and formula $\varphi$ and enumerate the subformulas $\varphi_0,\ldots,\varphi_k$ as before. We define an increasing sequence of cardinals $\lambda_0<\lambda_1<\cdots$ as before. Let $\lambda_0$ be any cardinal (and enlarge $u$ if necessary) so that any desired parameters appear in $V_{\lambda_0}(u)$. If $\lambda_n$ is defined, let $\lambda_{n+1}$ be the least cardinal above $\lambda_n$ such that if $\vec x$ is a finite tuple from $V_{\lambda_n}(w)$, where $w$ is a set of urelements equinumerous with some set in $V_{\lambda_n}(u)$ and there is a $y$ for which $\varphi_i(\vec x,y)$, then there is such a $y$ in $V_{\lambda_{n+1}}(w^+)$, where $w^+$ is an extension of $w$ that is equinumerous with some set in $V_{\lambda_{n+1}}(u)$. Such a cardinal exists, since all such $w^+$ are automorphic over $u$, once the size beyond $u$ is fixed, and so there are only a set of possible $\vec x$ to consider. Let $\lambda=\sup_n\lambda_n$.

\enlargethispage{20pt}
As before, we can consider the sets in $V_\lambda(u)$ that are the cardinalities of a set of urelements and apply collection to collect a family of such sets of urelements. The union of that collecting set will be a set $\bar w$ of urelements, whose size is at least as large as any set of urelements equinumerous with an element of $V_\lambda(u)$. Notice that the sets of urelements equinumerous with a set in $V_\lambda(u)$ are closed under finite unions, since if $w$ and $w'$ are isomorphic respectively with $x$ and $x'$ in $V_\lambda(u)$, then by using pairs we may assume $x$ and $x'$ are disjoint, and so $w\union w'$ will be equinumerous with a subset of $x\union x'$, which is in $V_\lambda(u)$. Let $v=\Union_{w\of\bar w}V_\lambda(w)$, where we allow $w\of\bar w$ that are equinumerous with a set in $V_\lambda(u)$. This set $v$ is a transitive set which is closed under witnesses for the formulas $\varphi_i$, since if $\vec x$ is in $v$ and there is a $y$ with $\varphi_i(\vec x,y)$, then since $\vec x\in V_{\lambda_n}(w)$ for some $w\of\bar w$ of size equinumerous with a set in $V_{\lambda_n}$, then we will be able to find such a $y$ in $V_{\lambda_{n+1}}(w^+)$, for a suitable size $w^+\of\bar w$, making $y\in v$. So by the Tarski-Vaught criterion, it follows that $\varphi$ is absolute between $v$ and $V(A)$, as desired.
\end{proof}\goodbreak

The following corollary is an immediate consequence.

\begin{corollary}\label{Corollary.ZFU+pure+duplication-implies-reflection}
 Assume \ZFU. If every set of urelements is equinumerous with a pure set and all sets of urelements can be duplicated over some set $u$, then the first-order reflection principle holds.
\end{corollary}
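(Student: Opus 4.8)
The plan is to read this off directly from Theorem~\ref{Theorem.ZFU+duplication+homogeneity-implies-reflection}, so the work amounts to checking that, for the set $u$ over which all sets of urelements can be duplicated, the two hypotheses of that theorem are met. Once they are, the theorem hands us the first-order reflection principle with nothing further to do.

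First I would verify hypothesis~(1), that every set of urelements is equinumerous with some set in $V(u)$. This is where the assumption ``every set of urelements is equinumerous with a pure set'' is used: every pure set lies in $V = V(\emptyset)$, and $V(\emptyset) \of V(u)$ for any set $u$, since $V(u)$ consists of all sets whose urelement support is contained in $u$. Hence equinumerosity with a pure set is in particular equinumerosity with a set in $V(u)$, which is hypothesis~(1).

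Next I would verify hypothesis~(2), homogeneity over $u$. Here the tool is Lemma~\ref{Lemma.Homogeneity} (urelement duplication $\to$ homogeneity): its hypothesis is exactly that all sets of urelements disjoint from $u$ can be duplicated, which is the duplication assumption of the corollary, and its conclusion is exactly hypothesis~(2)---any two equinumerous sets of urelements disjoint from $u$ are automorphic over $u$. With both hypotheses in hand, Theorem~\ref{Theorem.ZFU+duplication+homogeneity-implies-reflection} applies and gives the first-order reflection principle. I anticipate no genuine obstacle: the content of the corollary is precisely that ``equinumerous with a pure set'' is a special case of ``equinumerous with a set in $V(u)$'' and that the duplication assumption matches the hypothesis of Lemma~\ref{Lemma.Homogeneity} verbatim, so the only thing to keep track of is the trivial observation that the pure sets are contained in $V(u)$ for every $u$.
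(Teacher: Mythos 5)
Your proposal is correct and matches the paper's intent exactly: the paper presents this corollary as an immediate consequence of Theorem~\ref{Theorem.ZFU+duplication+homogeneity-implies-reflection}, having already noted right after that theorem that Lemma~\ref{Lemma.Homogeneity} lets one substitute duplication over $u$ for homogeneity, and that hypothesis~(1) amounts to $V(u)$ realizing all cardinalities. Your two verifications (pure sets lie in $V(\emptyset)\of V(u)$, and duplication over $u$ yields homogeneity over $u$ via Lemma~\ref{Lemma.Homogeneity}) are precisely the intended reading.
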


In particular, the theory $\ZFU+``$every set of urelements is well-orderable'' implies the first-order reflection principle, because all wellorderable sets are equinumerous with an ordinal, which is a pure set, and lemma \ref{Lemma.Duplication} provides a set $u$ over which sets of urelements can be duplicated. The assumption that every set of urelements is wellorderable is strictly weaker than \AC\ over \ZFU, since if one begins in a \ZF\ set-theoretic universe $V$ without \AC, but interprets $V\[A]$ for a well-orderable class $A$, such as $A=\Ord$, then every set of urelements in $V\[A]$ will be wellorderable there, but \AC\ will still fail amongst the pure sets. For example, it is relatively consistent with \ZFU\ to have a class of urelements enumerated by the ordinals, yet still $\R$ has no wellorder. For the same reason, the assertion that every set of urelements is wellorderable is strictly weaker than the assertion that the class of all urelements has a definable well order, since one cannot define a class wellorder of the urelements in that model $\<V\[A],\in,\A>$, as there will be nontrivial automorphisms of this structure swapping urelements beyond the support of any parameters used in the definition, and these would not fix the well order. Indeed, the urelements of this model can have no definable linear order.

Let us also prove that we get reflection for free whenever we construct a model of \ZFU\ by interpreting from a given model in which reflection holds. This includes every model of \ZF, and so all the interpreted models $V\[A]$ will have reflection.

\begin{theorem}\label{Theorem.Reflection-for-interpreted-models}
Assume \ZFU. If the universe $V(A)$ is interpretable in the class $V$ of pure sets, then the first-order reflection principle holds. More generally, if $V(A)$ is interpretable in a set-like manner inside a definable transitive class $W\of V(A)$ containing all pure sets and satisfying \ZFU\ plus the reflection principle, then $V(A)$ also satisfies the reflection principle.
\end{theorem}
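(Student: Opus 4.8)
The plan is to push the reflection principle up from $W$ to $V(A)$ by pulling formulas through the interpretation; the first assertion is then the case $W=V$, using that the pure sets of a model of \ZFU\ form a model of \ZF\ and hence satisfy the reflection principle (reflecting to some $V_\lambda$, which is a transitive set). So fix the interpretation $I=\<\delta_I,\epsilon_I,\upsilon_I>$, definable over $W$ from a parameter $q$, let $\<D,E,U>$ be the interpreted structure (with $D=\set{x\in W\mid W\satisfies\delta_I(x,q)}$ and $E$, $U$ the interpreted membership and atom relations), and let $\pi\colon\<V(A),\in,\A>\cong\<D,E,U>$ be the accompanying isomorphism. Since $W$ and $I$ are definable in $V(A)$ and the interpretation is set-like, I will take $\pi$ to be a class definable in $V(A)$; this is automatic in the intended applications, where $V(A)$ is the canonical interpreted model $V\[A]$ built over $W$ and $\pi=\id$.

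Given a formula $\varphi$ in the language $\set{\in,\A}$ and a set $p\in V(A)$, form the $I$-translation $\varphi^I$ over $W$ by relativizing quantifiers to $\delta_I$ and replacing atomic $\in$, $\A$ by $\epsilon_I$, $\upsilon_I$; the translations of all subformulas of $\varphi$, along with $\delta_I,\epsilon_I,\upsilon_I$, appear among the subformulas of $\varphi^I$. Apply the reflection principle of $W$ to $\varphi^I$ with parameters $\pi(p)$ and $q$: there is a transitive set $t\in W$ containing $\pi(p)$ and $q$ with $\varphi^I$ and all its subformulas absolute between $t$ and $W$, so that $t$ computes the interpretation correctly, i.e. $\set{x\in t\mid t\satisfies\delta_I(x,q)}=D\intersect t$, and similarly for $E$ and $U$. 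A Tarski--Vaught argument then shows that $\<D\intersect t,\,E\restrict(D\intersect t),\,U\restrict(D\intersect t)>$ is $\varphi$-elementary in $\<D,E,U>$: for a subformula $\exists y\,\psi$ of $\varphi$ and $\vec b\in D\intersect t$, the statement $\<D,E,U>\satisfies\exists y\,\psi(\vec b)$ unwinds to $W\satisfies(\exists y\,\psi)^I(\vec b,q)$, reflecting down to $t$ produces some $y\in t$ with $W\satisfies\delta_I(y,q)$ (hence $y\in D\intersect t$) and $\<D,E,U>\satisfies\psi(y,\vec b)$.

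What remains — and what I expect to be the crux — is realizing this $\varphi$-elementary substructure as a genuine transitive set of $V(A)$. I would set $v=\pi^{-1}[D\intersect t]$: this is a set in $V(A)$ by Replacement (since $D\intersect t$ is a set in $W\of V(A)$ and $\pi$ is definable), it contains $p$ (demand $\pi(p)\in t$), and it is transitive in $V(A)$ because $D\intersect t$ is $E$-transitive in $\<D,E,U>$ — and it is exactly here that the hypothesis that the interpretation is \emph{set-like} enters, guaranteeing that the $E$-predecessors of any element of $D\intersect t$ stay inside $t$. Since $\pi\restrict v$ is then an isomorphism of $\<v,\in,\A>$ onto that substructure, which is $\varphi$-elementary in $\<D,E,U>\cong\<V(A),\in,\A>$, the formula $\varphi$ and all its subformulas are absolute between $v$ and $V(A)$, as the reflection principle requires. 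The subtle point is that $D\intersect t$, although $\varphi$-elementary in the interpreted copy of $V(A)$ and a set of $W$, need not a priori be $E$-transitive nor literally a set of members of $V(A)$: the set-likeness of the interpretation handles the first, and the fact that the interpreted structure is isomorphic to $V(A)$ itself — so that $V(A)$ has exactly the atoms needed to carry out a urelement-respecting collapse — handles the second and keeps its image inside $V(A)$. When $\pi=\id$ no collapse is needed at all: $D\intersect t$ is already a transitive set represented inside $V(A)$, and one takes $v=D\intersect t$ directly.
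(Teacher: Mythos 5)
Your proposal is correct and follows essentially the same route as the paper: reflect the translated statement $\varphi^I$ inside $W$ to a transitive set, use set-likeness of the interpreted membership relation to arrange that the reflected piece is $E$-transitive, and transfer the result back to a transitive set of $V(A)$ via the isomorphism with the interpreted copy. The only cosmetic difference is that you make the isomorphism $\pi$ and the pullback $v=\pi^{-1}[D\intersect t]$ explicit where the paper tacitly identifies $V(A)$ with its interpreted copy; just note that the $E$-transitivity of $D\intersect t$ is not automatic from set-likeness alone but is obtained (as the paper does) by additionally demanding that $t$ be closed under the $E$-predecessors of its elements, e.g.\ by reflecting the set-likeness assertion alongside $\varphi^I$.
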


\begin{proof}
Assume the \ZFU\ universe $V(A)$ is interpretable inside a definable transitive class $W\satisfies\ZFU$ containing the pure sets in which the reflection principle holds, and that the interpreted membership relation $\in^{V(A)}$ is set-like in $W$. Fix the definition $\psi$ for the interpretation of $V(A)$ inside $W$. For any statement $\varphi$ true in $V(A)$, consider the more complex statement, ``$\varphi$ holds in the interpreted model defined by $\psi$.'' By reflection in $W$, there a transitive set $v$ in $W$ for which $\varphi$  is absolute between $V(A)$ and the corresponding set-sized version of the universe $V(A)$ as interpreted in $v$, that is, $v$'s version of $V(A)$. We can ensure that this is a transitive set in the interpreted copy of $V(A)$ by also asking $v$ to be closed under all the $V(A)$-members of its elements, using that the interpreted relation is set-like in $W$. Thus, we found a transitive set in $V(A)$ to which $\varphi$ reflects.
\end{proof}

For example, perhaps $V$ is a model of \ZF\ in which the reals are not well orderable, say, but we can nevertheless interpret the model $V\[V]$ with an urelement for every pure set, thereby satisfying $\ZFU+``$there are $V$ many urelements.'' Theorem \ref{Theorem.Reflection-for-interpreted-models} shows that the resulting model $V\[V]$ will fulfill the first-order reflection principle. This conclusion also follows from theorem \ref{Corollary.ZFU+pure+duplication-implies-reflection}, since every set in $V\[V]$ is equinumerous with a pure set and every set of urelements can be duplicated in this model.

\section{Class theory}

We should like next to undertake the entire analysis in the context of second-order set theory, developing the urelement analogues of \Godel-Bernays set theory and Kelley-Morse set theory. These are conveniently formalized as two-sorted theories, with a first-order domain of individuals, consisting of the atoms and the sets, and a second-order domain of classes, consisting of a family of classes of individuals.

Let us begin with a quick review the second-order pure theories. A model of \Godel-Bernays set theory \GBC\ has the form $\<M,\in^M,\mathcal{M}>$, where the first-order part $\<M,\in^M>$ is a model of \ZFC, and the second-order part $\mathcal{M}\of P(M)$ is a collection of classes $X\of M$. The classes are allowed as atomic predicates into the first-order language, so that the model can assert $a\in X$ for the elements $a$ of $X$ in $M$, and we allow these class parameters into the \ZFC\ collection and separation axiom schemes. The theory \GBC\ also has the first-order class comprehension axiom, asserting that for any formula $\varphi$ involving only first-order quantifiers (over the sets) and class parameters $X_i$, the collection $X=\set{a\mid \varphi(a,X_0,\ldots,X_n)}$ forms a class in $\mathcal{M}$. The theory \GBC\ also includes the axiom of global choice, which can be equivalently expressed either as the assertion that there is a global choice function $F$ for which $F(u)\in u$ for every nonempty set $u$, as the assertion that there is a global well-order relation $<$ on the universe of sets, a principle known also as the global well-order principle, or as the $\Ord$ enumeration principle, asserting specifically that there is such an order of type $\Ord$. Although these formulations of global choice are equivalent for the pure-set purposes of \GBC, we warn the reader that they are no longer equivalent in the urelement class theories.

The theory \GBC\ is conservative over \ZFC\ for assertions about sets, because we can equip any model $M\satisfies\ZFC$ with all its first-order definable classes (with parameters), and this will satisfy all of \GBC\ except possibly the global choice principle, since in some models there is no definable global well order. But with the method of forcing we can add a generic global well order to the universe, a forcing extension adding no sets, and then take the classes that are first-order definable using this new generic class parameter. The result is a model of \GBC\ with the same sets as $M$, and so the conclusion is that any purely set-theoretic situation that can happen in \ZFC\ can also happen in \GBC, and this is an alternative equivalent way to express conservativity. It follows from the conservativity result that \GBC\ is equiconsistent with \ZFC. Despite what seems to be a very close semantic connection between \GBC\ and \ZFC, however, these theories (if consistent) are not mutually interpretable---the reason is that \GBC\ is finitely axiomatizable and consequently cannot be interpreted in \ZFC, for such an interpretation would make use of only finitely many axioms of \ZFC, but \ZFC\ proves the consistency of its finite fragments, and so such an interpretation of \GBC\ in \ZFC\ would violate the second incompleteness theorem.

Kelley-Morse set theory \KM\ strengthens \GBC\ with the second-order class comprehension axiom scheme, allowing one to define classes $\set{x\mid\varphi(x,Z)}$ by formulas $\varphi$ having second-order quantifiers ranging over the available classes of the model. This theory is not conservative over \ZFC\, since it implies that there is a satisfaction class for first-order truth, and the existence of such a truth predicate implies $\Con(\ZFC)$ and $\Con(\ZFC+\Con(\ZFC))$ and much more (see \cite{Hamkins.blog2014:KMImpliesCon(ZFC)AndMuchMore} for an elementary account). Meanwhile, if $\kappa$ is an inaccessible cardinal, then $\<V_\kappa,\in,V_{\kappa+1}>$ is a model of Kelley-Morse set theory, and so the consistency strength of \KM\ is strictly between \ZFC\ and \ZFC\ plus an inaccessible cardinal. So \KM\ ultimately is just a small step up in consistency strength from \ZFC\ or \GBC.

Gitman and Hamkins \cite{GitmanHamkins:Kelley-MorseSetTheoryAndChoicePrinciplesForClasses} (see also \cite{GitmanHamkins2016:OpenDeterminacyForClassGames, GitmanHamkinsHolySchlichtWilliams2020:The-exact-strength-of-the-class-forcing-theorem}) have observed that even Kelley-Morse set theory \KM\ does not prove the class choice principle \CC, which asserts that for every class $I$, if for every individual $i\in I$ there is a class $X$ for which $\varphi(i,X,I)$, then there is a class $X\of I\times V$ such that for every $i\in I$ we have $\varphi(i,X_i,I)$, where $X_i=\set{x\mid (i,x)\in X}$ is the $i$th section of the class $X$. For example, if for every $n$ there is a class $X$ with $\varphi(n,X)$, then \CC\ allows you to put such classes together into a single class $X\of \omega\times V$ such that $\varphi(n,X_n)$ for all $n$. This is essentially a choice principle or collection principle for classes, and even the $\omega$-indexed version $\omega$-\CC\ is not provable in \KM.

Nevertheless, every model of \KM\ admits a submodel with the same sets (but possibly fewer classes), which is a model of \KM+\CC; see \cite{GitmanHamkins:Kelley-MorseSetTheoryAndChoicePrinciplesForClasses}. It follows that \KM\ and \KM+\CC\ are mutually interpretable theories and consequently also equiconsistent. But they are not bi-interpretable, in light of \cite{Enayat2016:Variations-on-a-Visserian-theme, FreireHamkins2021:Bi-interpretation-in-weak-set-theories}. Meanwhile, the class choice principle supports important applications of Kelley-Morse set theory, and \KM+\CC\ should be seen as a natural, robust extension of \KM.

\subsection{Interpretation of Kelley-Morse in a first-order set theory}

The class choice principle supports an important bi-interpretation phenomenon of Kelley-Morse set theory, showing that \KM+\CC\ is bi-interpretable with first-order strengthenings of $\ZFCm$ with an inaccessible cardinal. The observation is due originally to Wictor Marek and Andrzej Mostowski \cite{MarekMostowski1975:On-extendibility-of-models-of-ZF-to-KM}, with perhaps an earlier form in \cite{Scott1960:A-different-kind-of-model-for-set-theory}. But  see also the excellently thorough presentation of Kameryn Williams \cite{Williams2018:dissertation}.

\begin{theorem}[Marek, Mostowski]\label{Theorem.MarekMostowski}
 The following theories are bi-interpretable.
 \begin{enumerate}
   \item Kelley-Morse set theory \KM\ with the class choice principle \CC
   \item $\ZFCm+$ there is a largest cardinal, which is inaccessible
 \end{enumerate}
\end{theorem}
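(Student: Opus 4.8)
The plan is to exhibit the two interpretations explicitly and then check that the two round‑trips are definable isomorphisms.

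\textbf{From (1) to (2): the unrolling construction.} Given a model $\langle M,\in^M,\mathcal M\rangle\models\KM+\CC$, I would interpret the first‑order structure $N$ whose elements are the \emph{membership codes}: classes $R\in\mathcal M$ coding a binary relation on a field $\mathrm{fld}(R)\subseteq M$ that is extensional, well‑founded, and has a top point $t_R$ above every other point. Well‑foundedness here means that every nonempty \emph{subclass} of $\mathrm{fld}(R)$ has a minimal element, which is precisely where the full second‑order comprehension of $\KM$ (not merely $\GBC$) is used. Two codes are identified when there is a class isomorphism between them — unique when it exists, by the usual Mostowski‑style induction using extensionality and well‑foundedness — and one sets $R\mathrel{\hat\in}S$ iff $R$ is isomorphic to the restriction of $S$ below some $S$‑predecessor of $t_S$. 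I claim $\KM+\CC$ proves $N\models\ZFCm$ together with ``$\Ord^M$ is the largest cardinal and it is inaccessible.'' Extensionality, foundation, pairing, union, infinity and separation are straightforward surgeries on codes; the first‑order axiom of choice in $N$ follows from the global well‑order of $\langle M,\mathcal M\rangle$, used to well‑order the field of any code. That $\Ord^M$ is the largest cardinal of $N$ holds because every code is a class, hence injects into $\Ord^M$ via the global well‑order; regularity and the strong‑limit property of $\Ord^M$ inside $N$ come from collection/replacement and power set in $M$ applied to the coded sets of rank below $\Ord^M$, which are exactly the members of $M$.

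\textbf{From (2) to (1).} Given $N\models\ZFCm$ with largest cardinal $\kappa$, which is inaccessible, I would interpret $M=V_\kappa^N$ and $\mathcal M=P(V_\kappa)^N$, the collection of all $N$‑subsets of $V_\kappa$. Since $\kappa$ is inaccessible, $V_\kappa^N\models\ZFC$ in the standard way. First‑ and second‑order class comprehension for $\langle V_\kappa,\mathcal M\rangle$ reduce to separation in $N$ applied to the set $V_\kappa$: a formula over the two‑sorted structure with set‑ and class‑quantifiers becomes a formula over $N$ relativized to $V_\kappa$ and its subsets, and the resulting definable subclass of $V_\kappa$ is again an $N$‑subset of $V_\kappa$. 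Global choice holds because a well‑order of $V_\kappa$ in $N$ is itself an element of $P(V_\kappa)^N$ after coding $V_\kappa\times V_\kappa$ into $V_\kappa$. And $\CC$ for $\langle V_\kappa,\mathcal M\rangle$ is exactly collection in $N$: given that for each $i$ in a class $I\subseteq V_\kappa$ there is a class $X\subseteq V_\kappa$ with $\varphi(i,X)$, collection in $N$ yields a set of candidate $X$'s, and the well‑order lets us select one for each $i$ and assemble them into a single subset of $I\times V_\kappa\subseteq V_\kappa$.

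\textbf{Bi‑interpretation and the main obstacle.} It remains to verify that $\Psi\circ\Phi$ and $\Phi\circ\Psi$ are isomorphic to the identity by maps the models themselves define. Starting from $\langle M,\mathcal M\rangle$, unrolling to $N$, and passing to $\langle V_{\Ord^M}^N,P(V_{\Ord^M})^N\rangle$: the map sending $m\in M$ to the code given by $\in^M$ on $\TC^M(\{m\})$ with top $m$ is a definable isomorphism onto $V_{\Ord^M}^N$, and sending a class $A\in\mathcal M$ to the code of the set $A$ is a definable membership‑preserving bijection onto $P(V_{\Ord^M})^N$. Conversely, starting from $N$, forming $\langle V_\kappa,P(V_\kappa)\rangle$, and unrolling: each $x\in N$ has $|\TC(\{x\})|\le\kappa$ since $\kappa$ is largest, so pulling $\in$ back along any injection $\TC(\{x\})\hookrightarrow V_\kappa$ gives a code in $P(V_\kappa)$, while conversely every such code is a well‑founded extensional relation of size $\le\kappa$ that $N$ Mostowski‑collapses to a unique set; this correspondence is definable in $N$ and membership‑preserving. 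The technical heart of the whole argument — and the step I expect to be the main obstacle — is verifying \emph{collection} in the unrolled model (equivalently, getting the full $\ZFCm$ rather than merely ``ZFC without power set via replacement''), since that is exactly the step consuming $\CC$: assembling a single code for the collecting set out of a class‑indexed family of codes in one direction, and recognizing $\CC$ over $\langle V_\kappa,P(V_\kappa)\rangle$ as collection in $N$ in the other. A secondary but pervasive obstacle is that all the Mostowski‑collapse and uniqueness‑of‑isomorphism arguments must be carried out at the level of classes inside $\KM$, and that the two round‑trip isomorphisms must be shown \emph{definable} in the respective models, since external mutual interpretability alone would yield only equiconsistency, not bi‑interpretation.
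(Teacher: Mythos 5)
Your proposal is correct and follows essentially the same route as the paper: the unrolling of $\langle M,\mathcal M\rangle$ via equivalence classes of extensional well-founded class membership codes in one direction, the passage to $\langle V_\kappa^N, P(V_\kappa)^N\rangle$ in the other, with $\CC$ corresponding exactly to first-order collection in the unrolled model, and the round-trip isomorphisms witnessed definably by the codes $\langle\TC(\{x\}),\in\rangle$. You also correctly isolate the same technical pressure points the paper flags (and defers to Williams's dissertation for): carrying out the Mostowski-style uniqueness and extensionality arguments at the class level, and verifying collection in $W$ as the step consuming $\CC$.
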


\begin{proof}
To interpret the first theory in the second, assume a set-theoretic universe $\<M,\in^M>$ satisfies $\ZFCm$ and has a largest cardinal $\kappa$, which is inaccessible there. It follows that $M_\kappa=(V_\kappa)^M$ is a model of \ZFC. Inside $M$, we can consider the sets available that would form classes on $M_\kappa$, that is, the set $\mathcal{M}=\set{X\in M\mid M\satisfies X\of M_\kappa}$. The point now is that $\<M_\kappa,\in^M,\mathcal{M}>$ is a model of Kelley-Morse set theory by the same reasoning as in \ZFC\ that $\<V_\kappa,\in,V_{\kappa+1}>$ is a model of \KM\ whenever $\kappa$ is inaccessible---everything works fine just in $\ZFCm$. Note that the power set axiom holds below $\kappa$ in $M$ as a consequence of what it means for $\kappa$ to be inaccessible; and we get the class choice principle \CC\ in this \KM\ model, since instances of class choice in $\<M_\kappa,\in^M,\mathcal{M}>$ amount to instances of the first-order collection axiom in $M$, which holds because collection is part of $\ZFCm$.

For the converse interpretation, we provide the construction and sketch the argument, but refer the reader to \cite{Williams2018:dissertation} for fuller details. We begin with a model $\<V,\in,\mathcal{V}>$ of \KM+\CC\ and shall interpret a corresponding model $\<W,\varin>$ of $\ZFCm$ with a largest cardinal $\kappa$, which is inaccessible there. We employ a standard  technique of coding sets with binary relations, perhaps used more commonly when coding hereditarily countable sets with relations on $\N$, but the same idea works higher up, even with proper class codes as here, resulting in the \emph{unrolling} of a model of second-order set theory to a model of first-order set theory with sets of higher rank. Specifically, following \cite{Williams2018:dissertation}, let us say that a \emph{membership code} is a extensional class directed graph relation $E$ with a unique maximal node. (Let us agree to consider the empty relation always with $\emptyset$ as the unique maximal node, so the domain of individuals for any relation $E$ will be determined by $E$.)

As an example, for any set $x$ consider the membership relation $\<\TC(\set{x}),\in>$ on the transitive closure of the singleton $\set{x}$. This is an extensional directed graph having a unique maximal node, which is $x$ itself---the lower nodes represent the elements of $x$ and the elements of elements and so forth. Indeed, this is the principal motivating example of membership codes, for we shall think of the membership codes as representing sets completely in analogy with this case. Thus, we say that two codes are equivalent $E\sim F$, if they are isomorphic as directed graphs. For any node $y$ in the field of the relation $E$, let $E\downarrow y$ be the graph obtained by restricting to $y$ and nodes that are hereditarily below $y$---this also is a membership code, and we define $F\varin E$ if and only if $F\sim E\downarrow y$ for some $y$ with $y\mathrel{E} x$, where $x$ is the maximal node of $E$.

Williams \cite{Williams2018:dissertation} proves various welcome properties about these codes from weak fragments of \KM. For example, in $\text{GBc}^-$ one can prove that the isomorphisms of membership codes are unique when they exist, and initial partial isomorphisms of codes agree on their common domain. With \ETR, one can thus prove that any two codes admit a maximal partial initial isomorphism, and using this, one can prove extensionality for the codes.

Indeed, let $\<W,\varin>$ be the structure arising from the corresponding equivalence classes $[E]_\sim$ considered under the $\varin$ membership relation. This model is interpretable in $\<V,\in,\mathcal{V}>$, and the main observation to make is that it satisfies $\ZFCm$ with a largest cardinal, which is inaccessible there. We refer the reader to \cite{Williams2018:dissertation} for the details. The class choice principle \CC\ is used in order to verify the collection axiom in $W$, since collecting sets in $W$ amounts to collecting classes together in $\<V,\in,\mathcal{V}>$, and this is exactly what \CC\ enables. The largest cardinal $\kappa$ is simply $\Ord^V$, which is easily coded in $\mathcal{V}$ and is inaccessible there because $\Ord$ is closed under power sets and is regular with respect to classes in $\mathcal{V}$.

Finally, we observe that these interpretations provide a bi-interpretation. The original $\ZFCm$ model $\<M,\in^M>$ with a largest cardinal $\kappa$, which is inaccessible, can see how it is copied into $\<M_\kappa,\in^M,\mathcal{M}>$, since it can form the membership codes $\<\TC(\set{x}),\in^M>$ for any set $x$, and these will have size $\kappa$ and thus have isomorphic copies appearing in $\mathcal{M}$. And so the model $W$ interpreted inside $\<M_\kappa,\in^M,\mathcal{M}>$ will be a copy of $\<M,\in^M>$. And the model $\<V,\in,\mathcal{V}>$ of \KM+\CC\ can see how it arises as $W_\kappa$ inside the interpreted model $\<W,\varin>$. So this is a bi-interpretation.
\end{proof}

Since \KM\ is mutually interpretable (although not bi-interpretable) with $\KM+\CC$, it follows that \KM\ is mutually interpetable with $\ZFCm$ plus the assertion that there is a largest cardinal, which is inaccessible.

\section{Class theory with urelements}

Let us now introduce the urelement analogues of \GBC\ and \KM. Namely, \GBCU\ is the two-sorted theory for models of the form $\<M,\in^M,\A,\mathcal{M}>$, where $\<M,\in^M,\A>$ is a model of \ZFCU, allowing class parameters in the collection and separation schemes, and $\mathcal{M}$ is a collection of classes in $M$ that fulfills the first-order class comprehension axiom (in the language with $\in$ and $\A$, allowing class parameters), plus the assertion that there is a global well-order $<$ of the universe. The global well order $<$ is simply one of the classes in $\mathcal{M}$, as is the class of all urelements. The theory \KMU\ extends \GBCU\ with the second-order class comprehension axiom. The theories $\KMU$ and $\GBCU$ are formulated analogously with the enumeration predicate $\AAvec$ for the atoms rather than merely the predicate $\A$. We warn the reader that the various formulations of global choice, although equivalent in pure set theory, come apart in the urelement context (see \cite{Howard1978:Independence-results-for-class-forms-of-AC}). In particular, we won't generally be able to convert every class well order of the universe to a well order of type $\Ord$, since there may simply be too many urelements for this to be possible. We shall discuss this issue in further depth later.

In any model of \Godel-Bernays set theory $\<V,\in,\mathcal{V}>$, with any class $A\in \mathcal{V}$, we can define the interpreted urelement model $\<V\[A],\barin,\AAvec,\mathcal{V}\[A]>$ in analogy with the first-order case, defining $V\[A]$, $\barin$ and $\AAvec$ just as we did for the \ZFC\ context in section \ref{Section.Interpreting-urelements-in-ZFC}, but now also interpreting the second-order part of the models by placing the classes $\mathcal{V}\[A]=\set{B\in \mathcal{V}\mid B\of V\[A]}$. That is, the new classes are simply the old classes that happen to be subclasses of $V\[A]$.

\begin{theorem}\label{Theorem.GBC-GBCU-bi-interpretation}
 Every model $\<V,\in,\mathcal{V}>$ of \GBC\ is bi-interpretable, for every class $A\in \mathcal{V}$, with the interpreted model $\<V\[A],\barin,\AAvec,\mathcal{V}\[A]>$, which is a model of $\GBCU +``$there are $A$ many urelements.'' If the original model satisfies \KM, then the interpreted model satisfies $\KMU$, and if the original model satisfies the class choice principle \CC, then so does the interpreted model.
\end{theorem}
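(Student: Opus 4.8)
The plan is to build the second-order layer on top of the first-order bi-interpretation of Theorem~\ref{Theorem.V-bi-interpretable-V[A]-AAvec}. Recall that, uniformly in the class parameter $A$, the structure $\<V\[A],\barin,\AAvec>$ is a definable interpreted structure inside $\<V,\in>$, while conversely $\<V,\in>$ is recovered inside $\<V\[A],\barin>$ as the pure sets by way of the isomorphism $u\mapsto\check u$, and $\AAvec$ lets $V\[A]$ see exactly how it is built from its pure sets. For the class sort I would take the classes of the interpreted model to be $\mathcal{V}\[A]=\set{B\in\mathcal{V}\mid B\of V\[A]}$, a subcollection of $\mathcal{V}$ definable from $A$; conversely the classes of $V$ are recovered from $\mathcal{V}\[A]$ as $\set{\,\set{u\mid\check u\in C}\mid C\in\mathcal{V}\[A],\ C\of\hat V}$, using that the pure-set class $\hat V$ is definable in $\<V\[A],\barin,\AAvec>$ and that each $B\in\mathcal{V}$ appears as $\check B=\set{\check u\mid u\in B}$, a class of $\mathcal{V}$ lying inside $V\[A]$, hence a member of $\mathcal{V}\[A]$. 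The first-order isomorphism $u\mapsto\check u$ induces the class-sort correspondence $B\mapsto\check B$, which is definable in each model, so once the axioms are checked this is a bi-interpretation in the same sense as Theorem~\ref{Theorem.MarekMostowski}, and, being uniform, it also bi-interprets the associated theories.

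Next I would verify $\<V\[A],\barin,\AAvec,\mathcal{V}\[A]>\models\GBCU$. That $\<V\[A],\barin,\A>\models\ZFCU$ is already established in the discussion preceding Theorem~\ref{Theorem.V-bi-interpretable-V[A]-AAvec}, and the same argument gives collection and separation with class parameters from $\mathcal{V}\[A]$, since such a parameter is a subclass of $V\[A]$ drawn from $\mathcal{V}$ and any subset of $V\[A]$ realized in $V$ is represented by some $\bar y\in V\[A]$. First-order class comprehension holds because, for a first-order formula $\varphi$ in $\set{\barin,\A}$ with parameters from $\mathcal{V}\[A]$, satisfaction in the interpreted structure is first-order expressible in $\<V,\in,\mathcal{V}>$, so the comprehended class lies in $\mathcal{V}$ by class comprehension in $\GBC$, and it is a subclass of $V\[A]$, hence in $\mathcal{V}\[A]$. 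The class of urelements is the definable-from-$A$ class $\bar A\of V\[A]$, so it lies in $\mathcal{V}\[A]$, and $\A$ is definable from $\AAvec$. For global well-order, a global well-order $<_V\in\mathcal{V}$ of $V$ restricts to a well-order of the class $V\[A]$, and re-encoding its pairs as Kuratowski pairs computed inside $V\[A]$ produces a class of $\mathcal{V}$ that is a subclass of $V\[A]$, hence a global well-order of the interpreted universe living in $\mathcal{V}\[A]$. Finally $\AAvec$ itself, similarly re-encoded, is a class of $\mathcal{V}\[A]$, and by the first-order bi-interpretation it witnesses ``there are $A$ many urelements.''

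For the stronger theories I would run the same comprehension argument while tracking quantifier complexity. If $\<V,\in,\mathcal{V}>\models\KM$, then for a second-order formula $\varphi$ the satisfaction predicate of the interpreted model is second-order expressible in $\<V,\in,\mathcal{V}>$ by relativizing class quantifiers to ``$B\of V\[A]$'', so second-order class comprehension in $V$ supplies the comprehended class in $\mathcal{V}$, which again sits inside $V\[A]$ and hence in $\mathcal{V}\[A]$; thus the interpreted model satisfies $\KMU$. For $\CC$, given an instance in the interpreted model with index class $I\in\mathcal{V}\[A]$ and, for each individual $i\in I$, a witnessing class in $\mathcal{V}\[A]$, I would translate this to an instance of $\CC$ in $\<V,\in,\mathcal{V}>$ whose statement additionally demands that the chosen sections be subclasses of $V\[A]$; applying $\CC$ in $V$ yields a class $Y\of I\times V$ in $\mathcal{V}$ with every section $Y_i\of V\[A]$, and re-encoding $Y$ with $V\[A]$-pairs produces the required choice class in $\mathcal{V}\[A]$ whose $V\[A]$-sections are the $Y_i\in\mathcal{V}\[A]$.

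I expect the main obstacle to be bookkeeping rather than a conceptual hurdle: in each case one must confirm that the class just produced genuinely belongs to $\mathcal{V}\[A]$, which means both (i) that it lies in $\mathcal{V}$---handled uniformly by the observation that truth in the interpreted first-order or second-order structure is definable in $\<V,\in,\mathcal{V}>$ at the appropriate level---and (ii) that it is a subclass of $V\[A]$, which forces every relation, function, or choice class on $V\[A]$ to be represented by Kuratowski pairs computed inside $V\[A]$ rather than by honest ordered pairs of $V$. Once this translation discipline is fixed throughout, all the needed instances of comprehension, global choice, and class choice descend directly from the corresponding principles of $\<V,\in,\mathcal{V}>$.
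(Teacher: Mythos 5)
Your proposal is correct and follows essentially the same route as the paper: the same definition of $\mathcal{V}\[A]$ as the classes of $\mathcal{V}$ contained in $V\[A]$, axiom verification by transferring definability (first-order, second-order, and class-choice instances) from the interpreted structure back to $\<V,\in,\mathcal{V}>$, and the bi-interpretation obtained by extending $u\mapsto\check u$ to the class sort together with $\AAvec$ for the reverse direction. Your extra care about internal pair-coding and the global well-order is sound bookkeeping that the paper leaves implicit.
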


\begin{proof}
We know that $\<V\[A],\barin,\AAvec>$ satisfies $\ZFCU+``$there are $A$ many urelements,'' and we can allow class parameters into the collection and separation schemes. And since $V\[A]$ is first-order definable in the original model, any first-order definable class in $\<V\[A],\barin,\AAvec>$ is also definable in the original model, and so it exists as a class in $\mathcal{V}$ and will consequently be added to $\mathcal{V}\[A]$. If the original model satisfies \KM, then this argument works also with second-order definable classes, and if class choice \CC\ holds originally, it will hold in the interpreted model, since instances of class choice there amount to instances of class choice in the original model. Since $V$ can see how it is interpreted into the pure sets of $V\[A]$ via $u\mapsto\check u$, it can also transfer all the classes of $\mathcal{V}$ into the corresponding classes of pure sets $X\mapsto\hat X=\set{\check u\mid u\in X}$, and thus $\<V,\in,\mathcal{V}>$ can see how it is copied into $\<V\[A],\barin,\AAvec,\mathcal{V}\[A]>$. Conversely, the latter model can use the urelement enumeration predicate $\AAvec$ to see how it arises from the pure sets by the construction we have specified. So this is a bi-interpretation.
\end{proof}\goodbreak

The following theories are consequently all bi-interpretable with parameters:
\begin{enumerate}
  \item \GBC
  \item $\GBCU+``$there are $\omega$ many atoms''
  \item $\GBCU+``$there are $\Ord$ many atoms''
  \item $\GBCU+``$there are $V$ many atoms''
\end{enumerate}
as well as the following theories:
\begin{enumerate}
  \item \KM
  \item $\KMU+``$there are $\omega$ many atoms''
  \item $\KMU+``$there are $\Ord$ many atoms''
  \item $\KMU+``$there are $V$ many atoms''
\end{enumerate}\goodbreak

\noindent and the following:
\begin{enumerate}
  \item \KM+\CC
  \item $\KMU+\CC+``$there are $\omega$ many atoms''
  \item $\KMU+\CC+``$there are $\Ord$ many atoms''
  \item $\KMU+\CC+``$there are $V$ many atoms''
  \item $\ZFCm+``$there is a largest cardinal $\kappa$, which is inaccessible''
  \item $\ZFCU^{-}+``$there is a largest cardinal $\kappa$, which is inaccessible, and $\kappa$ many atoms''
\end{enumerate}

Note that we can take a class parameter enumerating all the urelements, if desired. In the next section we shall explain how to achieve more than $\Ord$ many urelements in models of \GBCU\ and \KMU.\goodbreak

\newpage
\section{Abundant atoms}\label{Section.Abundant-atoms}

\begin{wrapfigure}[10]{r}{.4\textwidth}\vskip-2ex\hfill
\begin{tikzpicture}[scale=.8]
\draw[fill=Orange!40] (0,0) -- (-1,4) -- (5,4) node[below left,scale=.8,align=center] {$V(A)$ } -- (4,0) -- cycle;
\draw[fill=Yellow!40] (0,0) -- (-1,4) to node[below,scale=.8] {$V$} (1,4) -- cycle;
\draw (0,3) node[scale=.6,align=center] {Pure\\ sets};
\draw[fill=Yellow] (0,0) -- (-.5,2) node[left,scale=.7] {$\kappa$} to node[below,scale=.7] {$V_\kappa$ } (.5,2) -- cycle;
\draw[fill=Orange!60] (0,0) -- (.5,2) -- (4.5,2) node[below left,scale=.7] {$V_\kappa(A)$} -- (4,0) -- cycle;
\draw[fill=Orange] (0,0) -- (.5,2) to[out=0,in=170] node[below left,scale=.7] {$H_\kappa(A)$} (4.0675,.25) -- (4,0) -- cycle;
\draw[DarkRed,dotted,line width=2pt,line cap=round, dash pattern=on 0pt off \pgflinewidth] (0,0) to node[below,scale=.45,align=center] {Atoms $A$} (4,0);
\end{tikzpicture}
\end{wrapfigure}
In urelement set theory, for any cardinal $\kappa$ we define the corresponding \emph{hereditary} class $H_\kappa(A)$ to be $(H_\kappa)^{V(A)}$, that is, the class of atoms and sets in $V(A)$ of hereditary size less than $\kappa$, the sets whose transitive closure has size less than $\kappa$. If we are working in a second-order urelement theory in the ambient universe $\<V(A),\in,\mathcal{V}>$, then we may equip $H_\kappa(A)$ with the family of classes $\mathcal{H}=\mathcal{V}\restrict H_\kappa(A)$, consisting of all the classes $X\in\mathcal{V}$ for which $X\of H_\kappa(A)$. Since every atom $a\in A$ appears in $H_\kappa(A)$, it follows that $A\of H_\kappa(A)$ and consequently also every class of urelements $B\of A$ is a class in the hereditary model $\mathcal{H}$. Thus, although $H_\kappa(A)$ has many fewer sets than $V(A)$, we nevertheless retain exactly the same classes of urelements in $\mathcal{H}$ as in $\mathcal{V}$.

\begin{theorem}\label{Theorem.GBCU-H_kappa(A)}
Assume that \GBCU\ holds in $\<V(A),\in,\mathcal{V}>$ with urelements $A$, and suppose $\kappa$ is an inaccessible cardinal in this model. Then $\<H_\kappa(A),\in,\mathcal{H}>$ is a model of \GBCU. If the ambient universe satisfies \KMU, then this hereditary model also satisfies \KMU, and similarly if \CC\ holds initially, then it holds in the hereditary model.
\end{theorem}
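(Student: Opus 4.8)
The plan is to run the classical verification that $H_\kappa$ models $\ZFCU$ when $\kappa$ is inaccessible, and then transfer the class-existence axioms from the ambient model by relativization. The enabling observation is that $H_\kappa(A)$ is itself a class of the ambient model --- it is first-order definable from the set parameter $\kappa$, so it lies in $\mathcal{V}$ by first-order comprehension --- and that $\mathcal{H}=\set{X\in\mathcal{V}\mid X\of H_\kappa(A)}$ is a definable subcollection of $\mathcal{V}$, so that satisfaction in $\langle H_\kappa(A),\in,\mathcal{H}\rangle$ is expressible inside $\langle V(A),\in,\mathcal{V}\rangle$ by relativizing first-order quantifiers to $H_\kappa(A)$ (with $\kappa$ as a parameter) and second-order quantifiers to the condition ``$X\of H_\kappa(A)$.''

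First I would verify $\langle H_\kappa(A),\in,\A\rangle\satisfies\ZFCU$. Extensionality, foundation, pairing, union, and infinity are immediate from the closure properties of $H_\kappa(A)$ together with $\kappa$ being uncountable; since every atom has hereditary size below $\kappa$, we have $A\of H_\kappa(A)$, so the predicate $\A$ is correctly interpreted as $A$. Power set uses that $\kappa$ is a strong limit: if $\Card{\TC(x)}<\kappa$ then $\Card{\TC(P(x))}\le 2^{\Card{x}}+\Card{\TC(x)}<\kappa$, so $P(x)\in H_\kappa(A)$. Separation holds because any subset of a member of $H_\kappa(A)$ is again a member. Collection is where regularity of $\kappa$ enters, along with the global well-order: if $x\in H_\kappa(A)$ and for each $y\in x$ there is $z\in H_\kappa(A)$ with $\langle H_\kappa(A),\in,\mathcal{H}\rangle\satisfies\varphi(y,z,\vec p)$, let $z_y$ be the $<$-least such $z$ and put $w=\set{z_y\mid y\in x}$; then $\Card{w}\le\Card{x}<\kappa$ and $\Card{\TC(w)}<\kappa$, being a union of fewer than $\kappa$ sets each of size below $\kappa$, so $w\in H_\kappa(A)$ collects the required witnesses. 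The axiom of choice holds by restricting the global well-order to the given set.

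Next I would handle the class axioms and the two strengthenings. The global well-order axiom holds because the restriction of the ambient global well-order to $H_\kappa(A)$ is a subclass of $H_\kappa(A)\cross H_\kappa(A)\of H_\kappa(A)$ --- ordered pairs of members of $H_\kappa(A)$ again being members --- hence an element of $\mathcal{H}$ well-ordering the individuals; likewise $A\in\mathcal{H}$. For first-order class comprehension, given a first-order formula $\varphi$ over $\set{\in,\A}$ with parameters from $\mathcal{H}$ and $H_\kappa(A)$, the class $\set{a\in H_\kappa(A)\mid\langle H_\kappa(A),\in,\mathcal{H}\rangle\satisfies\varphi}$ is first-order definable over $\langle V(A),\in,\mathcal{V}\rangle$ via the relativization, so it lies in $\mathcal{V}$, and being a subclass of $H_\kappa(A)$ it lies in $\mathcal{H}$; this yields $\GBCU$. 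If the ambient model satisfies $\KMU$, the same computation works for $\varphi$ carrying second-order quantifiers, since relativizing those to ``$X\of H_\kappa(A)$'' turns the target into a genuine second-order assertion over $\langle V(A),\in,\mathcal{V}\rangle$, to which ambient second-order comprehension applies. Finally, if $\CC$ holds in the ambient model, then given $I\in\mathcal{H}$ with, for each $i\in I$, a class $X\in\mathcal{H}$ witnessing the relativized $\varphi(i,X,I)$, ambient $\CC$ applied to the property ``$X\of H_\kappa(A)$ and $\psi(i,X,I)$,'' where $\psi$ is the relativization of $\varphi$, produces $Y\in\mathcal{V}$ with $Y\of I\cross H_\kappa(A)$ and $\psi(i,Y_i,I)$ for every $i\in I$; since $I\of H_\kappa(A)$ we get $Y\of H_\kappa(A)\cross H_\kappa(A)\of H_\kappa(A)$, so $Y\in\mathcal{H}$ and $Y$ witnesses the instance of $\CC$ in the hereditary model.

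The step I expect to require the most care is not any single axiom but the uniform observation that truth in $\langle H_\kappa(A),\in,\mathcal{H}\rangle$, including its second-order quantifiers, is expressible inside $\langle V(A),\in,\mathcal{V}\rangle$ at the appropriate level --- first-order for $\GBCU$, second-order for $\KMU$ and $\CC$ --- so that the ambient comprehension and class-choice schemes may be invoked; within the first-order part, the collection axiom is the one step that genuinely uses the inaccessibility of $\kappa$, and the urelements must be carried through the hereditary-size bookkeeping, though the definition of $H_\kappa(A)$ by hereditary size absorbs them automatically, since every set of urelements occurring inside a member of $H_\kappa(A)$ already has size below $\kappa$.
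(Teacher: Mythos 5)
Your proposal is correct and follows essentially the same route as the paper's proof: verify \ZFCU\ in $H_\kappa(A)$ from its closure properties (transitivity, closure under pairing, union, power set, and containing every subset of itself of size less than $\kappa$, which is where inaccessibility enters), then obtain class comprehension, global well-order, \KMU, and \CC\ by relativizing to $H_\kappa(A)$ and invoking the corresponding ambient schemes, noting that any resulting subclass of $H_\kappa(A)$ lands in $\mathcal{H}$. The paper's proof is merely a sketch of this same argument; your version supplies the axiom-by-axiom details it leaves to the reader.
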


\begin{proof}
Consider the structure $\<H_\kappa(A),\in,\mathcal{H}>$ as a model of second-order urelement set theory. What we claim is that this is a model of \GBCU. Using the fact that $\kappa$ is inaccessible, it is straightforward to verify that $\<H_\kappa(A),\in>$ is a model of \ZFCU, even with class parameters from $\mathcal{H}$, using that this is a transitive class closed under pairing, union, and power sets, and contains as an element every subset of it of size less than $\kappa$. Since $H_\kappa(A)$ is a definable class in the ambient universe, it follows that any first-order definable class $X\of H_\kappa(A)$, allowing class parameters, will exist in $\mathcal{V}$ and hence be placed into $\mathcal{H}$. So $\<H_\kappa(A),\in,\mathcal{H}>$ will fulfill first-order class comprehension. It will satisfy the global well order principle, since $H_\kappa(A)$ admits a well order in the ambient universe and this class will therefore exist in $\mathcal{H}$. So $\<H_\kappa(A),\in,\mathcal{H}>$ will be a model of \GBCU.

Furthermore, if the ambient universe $\<V(A),\in,\mathcal{V}>$ satisfies \KMU, then classes defined by second-order definitions also will exist and hence be in $\mathcal{H}$, and so $\<H_\kappa(A),\in,\mathcal{H}>$ will be a model of \KMU. And instances of \CC\ in $\<H_\kappa(A),\in,\mathcal{H}>$ similarly reduce to instances of \CC\ in the ambient universe.
\end{proof}\goodbreak

Let us look a little more closely at this model $\<H_\kappa(A),\in,\mathcal{H}>$, for there are some interesting things to notice. The ordinals of $H_\kappa(A)$ are exactly the ordinals below $\kappa$, and so if $A$ is larger than $\kappa$ in the original universe, then because it contains every urelement in $A$, the class $H_\kappa(A)$ will not be equinumerous with $\kappa$. In other words, $\<H_\kappa(A),\in,\mathcal{H}>$ will be a model of \GBCU\ or \KMU\ in which every class has a well order, but the universe has no $\Ord$-enumeration. In particular, the so-called limitation of size principle fails in this model, even though the global choice and global well order principles hold.

Let us now assume specifically that $A$ is equinumerous with $\Ord$ in the original model and look a little more closely at what kind of classes there are in $\<H_\kappa(A),\in,\mathcal{H}>$. Since $A$ has size $\Ord$ in the original universe $\<V(A),\in,\mathcal{V}>$, it follows that for every cardinal $\delta$, including cardinals $\delta\geq\kappa$, there will be a class $B\of A$ of size $\delta$ in $V(A)$. Such a class will have been a set in $V(A)$, but is a proper class of urelements in $\<H_\kappa(A),\in,\mathcal{H}>$, although it is smaller than $A$ in size, in the sense that these two proper classes are not equinumerous.

In this model $\<H_\kappa(A),\in,\mathcal{H}>$, let us say that a class is \emph{small}, if it is strictly smaller in equinumerosity than the class $A$ of all urelements. There are many small classes in this model, since all the cardinals between $\kappa$ and $\Ord$ in the original universe $V(A)$ are realized by small proper classes $B\of A$ in $\mathcal{H}$. Indeed, the small classes $B\of A$ are exactly the classes $B\of A$ that are sets in $V(A)$. It follows from this that if $B\of A$ is a small class of urelements, then it has a power set in $V(A)$, and so we may find a class $D\of I\times B$, for some small class $I$, such that the sections $D_i=\set{b\in B\mid (i,b)\in D}$ are all different and every class $B'\of B$ is realized as $D_i$ for some $i\in I$. In other words, the model $\<H_\kappa(A),\in,\mathcal{H}>$ can see that every small class of urelements has a small power class, indexed by a class $D$ upon a small class $I$ in that way. It also follows that for every small class $I$ and every class $D\of I\times A$, such that $D_i$ is small for every $i\in I$, then $D$ itself is small. This is simply because the union of set many sets in $V(A)$ is a set, by the replacement axiom. This fact also is visible in $\<H_\kappa(A),\in,\mathcal{H}>$.\goodbreak

Let us give a name to these properties.

\begin{definition}
 The \emph{abundant atom axiom} (\AAA) is the assertion in urelement class theory that
 \begin{enumerate}
   \item The class of urelements is strictly larger than $\Ord$;
   \item every small class of urelements admits a small power class, that is,
    for every small class $B$ there is a small class $I$ and $D\of I\times B$ such that every subclass of $B$ is realized as a section $D_i$; and
   \item every small-indexed class of small classes is small, that is, if $I$ is a small class and every section $D_i$ of a class $D\of I\times A$ is small, then $D$ itself is small.
 \end{enumerate}
\end{definition}

And so we established the following improvement on theorem \ref{Theorem.GBCU-H_kappa(A)}.

\begin{theorem}\label{Theorem.GBCU-H_kappa(A)-AAA}
Assume that \GBCU\ holds with $\Ord$ many urelements $A$ and $\kappa$ is an inaccessible cardinal. Then $\<H_\kappa(A),\in,\mathcal{H}>$ is a model of \GBCU\ plus the abundant atom axiom \AAA. If the ambient universe satisfies \KMU, then this hereditary model also satisfies \KMU. If \CC\ holds originally, then it holds in the hereditary model.
\end{theorem}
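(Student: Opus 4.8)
The plan is to derive most of the statement from Theorem~\ref{Theorem.GBCU-H_kappa(A)} and then verify the abundant atom axiom by hand. The hypotheses here include those of Theorem~\ref{Theorem.GBCU-H_kappa(A)}, so that result already gives that $\<H_\kappa(A),\in,\mathcal{H}>$ satisfies \GBCU, that it satisfies \KMU\ if the ambient universe does, and that it satisfies \CC\ if the ambient universe does. Thus the only new content is to verify \AAA\ in $\<H_\kappa(A),\in,\mathcal{H}>$, and this is where the extra hypothesis that $A$ has size $\Ord$ in $\<V(A),\in,\mathcal{V}>$ is used.

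The first step I would carry out is to establish a characterization of the \emph{small} classes of $\<H_\kappa(A),\in,\mathcal{H}>$: a class $C\in\mathcal{H}$ is small precisely when $C$ is a set of $V(A)$ (necessarily a subclass of $H_\kappa(A)$). For the forward direction, smallness provides an injection $f\colon C\to A$ in $\mathcal{H}\of\mathcal{V}$; the image $f\image C$ is a subclass of $A$ in $V(A)$, and it cannot be a proper class there, since every proper subclass of $A$ in $V(A)$ is equinumerous with $\Ord$ and hence with $A$ --- pull back through a bijection $\Ord\cong A$ in $\mathcal{V}$ and enumerate the resulting proper class of ordinals in increasing order, which is a set-like well order of type $\Ord$ --- and that would make $C$ equinumerous with $A$ in $\mathcal{H}$, contradicting smallness. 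So $f\image C$ is a set of $V(A)$, and therefore so is $C$, by replacement. Conversely, any set $u$ of $V(A)$ with $u\of H_\kappa(A)$ is small in $\mathcal{H}$: it is not equinumerous in $\mathcal{V}$ with the proper class $A$ of $V(A)$, while $|u|<\Ord=|A|$ in $V(A)$ together with $A$ having $\Ord$ many urelements yields an injection of $u$ into the urelements in $\mathcal{V}$, which is a subclass of $H_\kappa(A)\times H_\kappa(A)$ and hence a member of $\mathcal{H}$.

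Granting the characterization, the three clauses of \AAA\ follow routinely, each by pushing the question into the ambient \ZFCU\ universe. For clause (1), the ordinals of $H_\kappa(A)$ are exactly those below $\kappa$; an enumeration of $\kappa$ many urelements is a set of $V(A)$ contained in $H_\kappa(A)$, hence a class in $\mathcal{H}$ witnessing an injection $\kappa\to A$, while no injection of $A$ into $\kappa$ can exist in $\mathcal{H}\of\mathcal{V}$, lest $A$ be a set of $V(A)$ by replacement. For clause (2), a small class $B$ is a set of $V(A)$, so $P(B)$ exists there with $|P(B)|<\Ord=|A|$; fixing a bijection $g$ of a set $I$ of urelements onto $P(B)$ in $\mathcal{V}$, the class $D=\set{(i,x)\mid i\in I,\ x\in g(i)}\of I\times B$ has all its elements of hereditary size below $\kappa$, so $D\in\mathcal{H}$, the index class $I$ is small by the characterization, and the sections of $D$ run through all subsets of $B$ --- which in $\mathcal{H}$ are exactly the subclasses of $B$, since a subclass of the set $B$ is a subset of it by separation. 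For clause (3), if $I$ is small then $I$ is a set of $V(A)$, and if each section $D_i$ of $D\of I\times A$ is small then each $D_i$ is a set of $V(A)$, so $D=\Union_{i\in I}\set{i}\times D_i$ is a set-indexed union of sets of $V(A)$, hence a set of $V(A)$ by replacement and union, hence small in $\mathcal{H}$ again by the characterization.

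The crux of the argument is the smallness characterization, and within it the claim that every proper subclass of $A$ is equinumerous with $A$: this is the one place the new hypothesis ``$\Ord$ many urelements'' genuinely enters, and it is precisely what reduces clauses (2) and (3) to the power set and replacement axioms holding in the ambient universe. The remaining work is just bookkeeping about membership in $\mathcal{H}$, chiefly the observation that a class of ordered pairs built from members of $H_\kappa(A)$ is itself a subclass of $H_\kappa(A)$, so that the witnessing classes we construct in $\mathcal{V}$ actually live in $\mathcal{H}$.
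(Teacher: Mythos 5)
Your proposal is correct and follows essentially the same route as the paper, which establishes this theorem via the discussion preceding it: the paper likewise derives \GBCU/\KMU/\CC\ from theorem~\ref{Theorem.GBCU-H_kappa(A)} and verifies \AAA\ by observing that the small classes are exactly those that are sets of $V(A)$, reducing clauses (2) and (3) to the power set and replacement axioms of the ambient universe. Your write-up is somewhat more careful than the paper's (proving the smallness characterization in both directions and for arbitrary classes rather than only subclasses of $A$, and checking membership in $\mathcal{H}$), but the underlying argument is the same.
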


Let us deepen the analysis by revealing the bi-interpretation phenomenon at the heart of the situation.

\begin{theorem}\label{Theorem.AAA-bi-interpretation-five-theories}
The following theories are bi-interpretable, with parameters.
\begin{enumerate}
  \item $\KMU+\CC+{}$the abundant atom axiom
  \item $\KM+\CC+{}$there is an inaccessible cardinal
  \item $\KMU+\CC+{}$there is an inaccessible cardinal and $\Ord$ many atoms
  \item $\ZFCm+{}$there are two inaccessible cardinals $\kappa<\lambda$, and $\lambda$ is the largest cardinal
  \item $\ZFCU^{-}+{}$there are two inaccessible cardinals $\kappa<\lambda$, where $\lambda$ is the largest cardinal, and there are $\lambda$ many atoms
\end{enumerate}
\end{theorem}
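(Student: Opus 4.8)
The plan is to connect the five theories by a path of bi-interpretations, with parameters, in which four of the links are essentially instances of bi-interpretations already established and only one link---between theory~(1) and theory~(3)---carries genuinely new content; since bi-interpretation composes, this will suffice. First the routine links. To bi-interpret (2) with (3), apply Theorem~\ref{Theorem.GBC-GBCU-bi-interpretation} with $A=\Ord$, noting only that an inaccessible cardinal transfers: the pure sets of $\<V\[\Ord],\barin,\AAvec,\mathcal{V}\[\Ord]\>$ form a copy of $\<V,\in,\mathcal{V}\>$, and inaccessibility of a cardinal $\delta$ concerns only $V_\delta$ and the regularity of $\delta$, both preserved by the copying. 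To bi-interpret (2) with (4), apply the Marek--Mostowski bi-interpretation of Theorem~\ref{Theorem.MarekMostowski}, observing that it relativizes to a second inaccessible: an inaccessible cardinal in a model of $\KM+\CC$ stays inaccessible in the unrolled $\ZFCm$ model below the largest cardinal $\Ord$, giving two inaccessibles with the larger one largest, as in~(4); and conversely from $\<W,\in\>\models(4)$ with inaccessibles $\kappa<\lambda$ and $\lambda$ largest, the structure $\<V_\lambda^W,\in,\{X\in W:X\of V_\lambda^W\}\>$ is a model of $\KM+\CC$ in which $\kappa$ remains inaccessible. To bi-interpret (4) with (5), note that Theorem~\ref{Theorem.Set-urelements-bi-interpretation} goes through over $\ZFCm$ for any set~$A$; applying it with $A$ a set of size $\lambda$, the largest cardinal (for instance $A=V_\lambda$, using $|V_\lambda|=\lambda$ for inaccessible $\lambda$), produces a model of $\ZFCU^{-}+{}$``there are $\lambda$ many atoms'' whose pure sets copy $\<W,\in\>$ and therefore carry along the two inaccessibles $\kappa<\lambda$ and the property that $\lambda$ is the largest cardinal.

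The new link bi-interprets (1) with (3). Interpreting (1) in (3) is essentially Theorem~\ref{Theorem.GBCU-H_kappa(A)-AAA}: from $\<V(A),\in,\mathcal{V}\>\models(3)$ with inaccessible $\kappa$, the structure $\<H_\kappa(A),\in,\mathcal{H}\>$ is a model of $\KMU+\CC+\AAA$, and it is definable in the ambient model with $\kappa$ as a parameter. For the converse I would unroll the classes of a model $\<U(A),\in,\mathcal{U}\>$ of $\KMU+\CC+\AAA$ by the membership-code construction of Theorem~\ref{Theorem.MarekMostowski}, but now retaining the atoms $A$ of $U$ as the atoms of the resulting first-order structure and using the \emph{small} membership codes---those isomorphic to codes whose field has size less than that of $A$, equivalently those obtained by transporting a set's membership relation onto a small set of atoms---to code its sets. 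In the recovered structure $\Ord^U$ becomes a cardinal $\kappa$ that is inaccessible there, regular because $U$ satisfies replacement and a strong limit because $U$ satisfies the power set axiom, exactly as in the pure Marek--Mostowski argument; and the three clauses of the abundant atom axiom are precisely what make the recovered structure satisfy $\ZFCU$ with $\Ord$ many atoms: the first clause says there are more than $\kappa$ atoms, the second (small classes have small power classes) yields the power set axiom for the urelement-supported sets, and the third (small-indexed families of small classes are small) yields replacement and collection. Equipping this first-order structure with the classes coded by its \emph{non}-small membership codes then recovers a $\KMU$ model, and $\CC$ in $U$ is exactly what is needed, just as in Theorem~\ref{Theorem.MarekMostowski}, to verify its collection axiom and its class choice principle. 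The two round-trips of the bi-interpretation are then checked in the standard way, using that a model can form the membership codes of its own sets and classes and hence parametrically define the isomorphism with its double-interpreted copy; in particular, on the (3)-side one uses that a set of the model, having size smaller than the class of atoms, can have its membership relation transported onto a set of atoms, so that it is coded by a small class.

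The step I expect to be the main obstacle is this calibration of $\AAA$: one must verify that its second and third clauses translate, through the unrolling, into exactly the power set axiom and the replacement/collection axioms of the recovered model---nothing left over, nothing missing---and that its first clause is exactly the assertion that the recovered model has more atoms than its inaccessible $\kappa$, i.e. a proper class of atoms enumerated in type $\Ord$. Getting this tight requires care about which subclasses of $A$ count as small, namely exactly those that are sets of the larger recovered universe, as the discussion following Theorem~\ref{Theorem.GBCU-H_kappa(A)} brings out, and about the failure of the limitation-of-size principle in these models, which forces the atoms to be tracked separately from the ordinals throughout. Adapting the membership-code machinery to a universe with a proper class of atoms is otherwise routine but must be carried out so that the reconstituted sets correctly carry their urelement supports.
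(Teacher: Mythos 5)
Your proposal is correct and follows essentially the same route as the paper: the links (2)--(3), (2)--(4), (4)--(5) are handled by the same prior theorems, the AAA clauses are calibrated against inaccessibility of the cardinal arising from the class of atoms in exactly the paper's way, and your direct (1)--(3) link via urelement membership codes restricted to small codes is precisely the ``urelement unrolling'' the paper describes immediately after its proof. The only cosmetic difference is routing: the paper's primary chain unrolls (1) all the way to the pure $\ZFCm$ model of (4) using class membership codes on the urelements and then closes the loop by recognizing the original model as $H_\kappa(A)$ inside $W(A)$, whereas you carry the new content on the (1)--(3) edge.
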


\newpage
\begin{wrapfigure}{r}{.45\textwidth}\vskip-0ex\hfill
\begin{tikzpicture}[scale=.8]
\draw[fill=Blue!15] (0,4) -- (0,5.5) -- (1,5.5) to node[below,scale=.6,align=center] {\Large $W(A)$\\[1ex] $\ZFCU^{-}$\\ $\lambda$ atoms, $\kappa,\lambda$ inacc} (4,5.5) to[out=0,in=70] (5,4);
\draw[fill=Blue!30] (-1,4) to[out=105,in=180] (0,5.5) node[below,scale=.6,align=center] {\Large $W$\\[1ex] $\ZFCm$\\ $\kappa,\lambda$ inacc} to [out=0,in=75] (1,4);
\draw[fill=Orange!40] (0,0) -- (-1,4) -- (1,4) to node[below,scale=.6,align=center] {\Large $\Vbar(A)$\\[1ex] $\KMU+\CC$\\ $\kappa$ inacc, $\Ord$ atoms}  (5,4) node[right,scale=.7] {$\lambda$} --  (4,0) -- cycle;
\draw[fill=Yellow!40] (0,0) -- (-1,4) to node[below,scale=.6,align=center] {\Large $\Vbar$\\[1ex] $\KM+\CC$\\ $\kappa$ inacc} (1,4) -- cycle;
\draw[fill=Yellow] (0,0) -- (-.5,2) to node[below,scale=.6,align=center] {\Large$V$\\ \scriptsize\KM\\ \scriptsize\CC} (.5,2) -- cycle;
\draw[fill=Orange] (0,0) -- (.5,2) to node[below,scale=.6,align=center] {\Large $V(A)$\\[1ex] $\KMU+\CC$\\ Abundant atoms} (4.5,2) node[right,scale=.7] {$\kappa$} -- (4,0) -- cycle;
\draw[DarkRed,dotted,line width=2pt,line cap=round, dash pattern=on 0pt off \pgflinewidth] (0,0) to node[below,scale=.45,align=center] {Abundant atoms $A$} (4,0);
\end{tikzpicture}
\captionsetup{style=rightside}
\caption{The bi-interpretable models}\label{Figure.Five-models}
\end{wrapfigure}
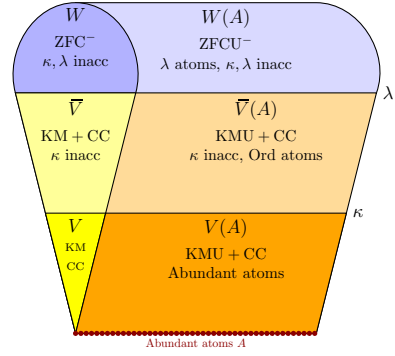
\smallskip\noindent\emph{Proof.} 
The relationships of the five bi-interpretable models are illustrated in figure \ref{Figure.Five-models}, but note that the model $V$ at lower left is not itself used directly in the interpretations. Let us start by interpreting from the bottom right of the figure to the top left, interpreting from theory (1) to theory (4). Suppose $\<V(A),\in,\mathcal{V}>$ satisfies $\KMU+\CC$ with the abundant atom axiom. We may follow the unrolling construction of theorem \ref{Theorem.MarekMostowski}, which actually works equally well in $\KMU+\CC$ as in $\KM+\CC$. That is, we consider the class membership codes $E$, allowing class relations not just on $\Ord$ or on the pure sets $V$, but on the class of urelements, and define equivalence $E\sim F$ and membership $E\varin F$. The arguments of theorem \ref{Theorem.MarekMostowski} show that the resulting universe $\<W,\varin>$ is a model of $\ZFCm$. The ordinals of $V$ become an inaccessible cardinal $\kappa$ in $W$. Because of the abundant atom axiom, however, there will be cardinals in $W$ larger than $\kappa$. The shortest well-ordering of the class of atoms will become an inaccessible cardinal $\lambda$ of $W$ strictly above $\kappa$. The cardinals below $\lambda$ in $W$ are precisely those with class membership codes that are small in $\mathcal{V}$. The abundant atom axiom ensures $\lambda$ is inaccessible in $W$, namely, a strong limit because any small class membership code will have a small class code for the power set and regular, because any small class code for a set of cardinals less than $\lambda$ will also be less than $\lambda$. Thus, $W$ is a model of $\ZFCm$ with two inaccessible cardinals $\kappa<\lambda$, where $\lambda$ is the largest cardinal.

The model $\<W,\varin>$ is in turn bi-interpretable with the extension $\<W(A),\varin>$ adjoining $\lambda$ many atoms. This bi-interpretation can be realized via the construction of $W\[\lambda]$ described in section \ref{Section.Interpreting-urelements-in-ZFC} as with theorem \ref{Theorem.V-bi-interpretable-V[A]-AAvec}, but undertaken in $\ZFCm$, where the construction works fine, producing a model of $\ZFCU^{-}$ in which $\kappa$ and $\lambda$ remain inaccessible, with $\lambda$ remaining as the largest cardinal, and the set of urelements has size $\lambda$. Thus, theories (4) and (5) are bi-interpretable.

The model $\<W,\varin>$ is bi-interpretable with $\<\Vbar,\barin,\Vcalbar>$ as in theorem \ref{Theorem.MarekMostowski}, where $\Vbar=W_\lambda$ and $\Vcalbar$ consists of the subsets of $\Vbar$ available in $W$. The model $\<\Vbar,\barin,\Vcalbar>$ has $\lambda$ as its class of ordinals, and $\kappa$ will still be inaccessible there. The model $\<\Vbar,\barin,\Vcalbar>$ is in turn bi-interpretable with $\<\Vbar(A),\barin,\Vcalbar(A)>$, adjoining $\lambda$ many atoms, because of theorem \ref{Theorem.GBC-GBCU-bi-interpretation}.

The original model $\<V(A),\in,\mathcal{V}>$ is realized as the $\<H_\kappa(A),\varin,\mathcal{H}>$ as defined in $\<W(A),\varin>$, and it can see how it is realized in that way via the interpretation to $W$ and then to $W\[A]$. So all five theories are bi-interpertable, as claimed. \hfill$\QEDbox$\medskip\goodbreak 

Let us also describe how to achieve a direct bi-interpretation of $\<V(A),\in,\mathcal{V}>$ with $\<W(A),\varin>$ and hence also with $\<\Vbar(A),\barin,\Vcalbar(A)>$. We can simply mount an urelement analogue of the unrolling construction, what might be called the \emph{urelement unrolling} of the model. Specifically, define that an \emph{urelement membership code} is a  well-founded directed graph class relation $E$ with a unique maximal node, which is extensional on all its non-minimal nodes, and where the $E$-minimal nodes are either elements of $A$ or $\emptyset$. For any set $x$ in the urelement context, the canonical membership code $\<\TC(\set{x}),\in>$ is an example, since the $\in$-minimal elements of this set will be atoms or $\emptyset$. As before, we define an equivalence relation on the codes $E\sim F$, which holds when they are isomorphic by a map fixing the atoms used as minimal elements. And we define $E\varin F$ if there is some $y\mathrel{F} x$, where $x$ is the maximal node of $F$, such that $E\sim F\downarrow y$. In the theory \KMU+\CC, one can verify that these codes work as expected to code higher sets, just as in theorem \ref{Theorem.MarekMostowski} with the pure membership codes. One shows that the isomorphisms witnessing equivalence are unique, that any two partial initial isomorphisms of codes agree on their common part, that every two codes admits a maximal partial initial isomorphism. Using the urelement analogues of the arguments made earlier, one can thus verify that the structure $\<W(A),\varin,\mathcal{W}>$, where $W(A)$ consists of the equivalence classes $[E]_\sim$ of the urelement membership codes, and $\mathcal{W}$ consists of the classes $X\of W(A)$ that are realized by a class of codes in $\mathcal{H}$. One gets the power set axiom in $\<W(A),\varin>$ below $\lambda$ using that the small classes are closed under powerclass by the abundant atom axiom. To verify collection, one uses both \CC\ as before and also the part of the abundant atom axiom asserting that the small classes are closed under small unions. The cardinal $\kappa$ arising from $\Ord^V$ is inaccessible in $W(A)$, as is the cardinal $\lambda$ that comes from the class of atoms.

If one considers only the urelement membership codes arising from small classes, the result is the model $\<\Vbar(A),\barin,\Vcalbar(A)>$, and  if one considers only the small class membership codes (with no urelements), then one realizes $\<\Vbar,\barin,\Vcalbar>$. In this way, we can directly interpret in a natural way between any two of these five models.

Finally, let us observe that we can take only the pure sets and classes of $\<V(A),\in,\mathcal{V}>$ to get the model $\<V,\in,\mathcal{V}_{\tiny pure}>$ of $\KM+\CC$ indicated in the lower left of figure \ref{Figure.Five-models}, but what we should like specifically to observe is that this model is not bi-interpretable with the other five models mentioned in the proof of theorem \ref{Theorem.AAA-bi-interpretation-five-theories}. This model has no classes of size larger than $\kappa$ and has no way to represent the cardinals of $\Vbar$ and $W$ above $\kappa$. If we were to perform the unrolling construction of this model, we would get a model of $\ZFCm$ in which $\kappa$ was the largest cardinal. In fact, it would be $H_{\kappa^+}^W=H_{\kappa^+}^{\Vbar}$, which is considerably smaller than the other models we are considering here.

\section{Second-order reflection in pure set theory}

Let us now consider the topic of reflection in models $\<V,\in,\mathcal{V}>$ of second-order set theory. Of course, we immediately get first-order reflection as with the \Levy-Montague reflection theorem---every first order statement $\varphi(X)$ that is true in $\<V,\in,\mathcal{V}>$ even with a class parameter $X\in\mathcal{V}$ is also true at some rank in the cumulative hierarchy $V_\lambda\satisfies\varphi(X\restrict V_\lambda)$.

Generalizing this, the \emph{second-order reflection principle} (evidently first considered in \cite{Bernays1976:On-the-problem-of-schemata-of-infinity}) is the scheme of assertions that every second order assertion $\varphi(X)$ true in $\<V,\in,\mathcal{V}>$ is already true in some rank-initial segment of the universe  $V_\lambda\satisfies\varphi(X\intersect V_\lambda)$.  It would be equivalent to say that every $\varphi(X)$ true in $\<V,\in,\mathcal{V}>$ is true in some transitive set $v$, that is, the statement $\varphi(X\intersect v)$ holds in the structure  $\<v,\in,P(v)>$, equipped with all its subsets. The reason is that by simply including \GBC\ as part of what $\varphi$ asserts, we can thereby assume $\<v,\in,P(v)>$ is a model of \GBC, and since the model has all subsets of $v$ this will be a model of second-order $\ZFC_2$, but by Zermelo's quasi-categoricity theorem, this implies that $v$ must be $V_\kappa$ for some inaccessible cardinal $\kappa$. So in fact, the second-order reflection principle is equivalently stated as: every second-order assertion $\varphi(X)$ true in the full universe $\<V,\in,\mathcal{V}>$ reflects to some inaccessible cardinal $\<V_\kappa,\in,V_{\kappa+1}>\satisfies\varphi(X\intersect V_\kappa)$. Note that we can equivalently state reflection as the principle that every formula $\varphi(x,X)$ with a free variable $x$ and class parameter $X$ is absolute to some inaccessible $V_\kappa$, since we can incorporate as a parameter the class of satisfying instances $x$, and reflecting that class and the fact that it is defined by $\varphi$ amounts to absoluteness of $\varphi(x,X)$ between $V_\kappa$ and $V$.

By pushing on this inaccessibility argument a little, we get a stationary proper class of inaccessible cardinals. Namely, for any class club $C\of\Ord$, we can reflect the assertion that $C$ is a class club down to some inaccessible $V_\kappa$, and from this it follows that $\kappa$ is a limit point of $C$ and hence in $C$. In other words, second-order reflection implies that every closed unbounded class of cardinals contains an inaccessible cardinal, and this is what it means to say tha $\Ord$ is Mahlo. Pushing still harder achieves even better lower bounds, as follows.

\begin{theorem}
 Assume \KM\ plus the second-order reflection principle. Then for every $n$ there are a stationary proper class of $\Pi^1_n$-indescribable cardinals.
\end{theorem}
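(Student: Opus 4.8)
The plan is to run the standard bootstrapping argument for indescribable cardinals, using second-order reflection in $\<V,\in,\mathcal{V}>$ as the engine. First I would establish the base case, which is essentially already done in the preceding discussion: second-order reflection implies $\Ord$ is Mahlo, hence there is a stationary proper class of inaccessible cardinals, and these are exactly the $\Pi^1_0$-indescribable (equivalently $\Sigma^1_1$-indescribable) cardinals in the usual sense. The induction hypothesis will be that for a fixed meta-theoretic $n$, there is a stationary proper class $S_n$ of $\Pi^1_n$-indescribable cardinals, and moreover that this is \emph{provable} from $\KM$ plus second-order reflection, so that the fact ``$S_n$ is stationary and consists of $\Pi^1_n$-indescribable cardinals'' is itself a second-order assertion available to reflect.

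The key step is the successor case. Suppose toward establishing $\Pi^1_{n+1}$-indescribability that $\kappa$ fails to be $\Pi^1_{n+1}$-indescribable but lies in some class club $C$; I want a contradiction, or rather I want to show the class of $\Pi^1_{n+1}$-indescribable cardinals is stationary. The cleaner route: take any class club $C\of\Ord$. Consider the second-order assertion $\Phi$ over $\<V,\in,\mathcal{V}>$ that says: $\KM$ holds, $C$ is a closed unbounded class of ordinals, and $S_n$ is a stationary proper class of $\Pi^1_n$-indescribable cardinals (using the parameter coding $C$, and noting $S_n$ is second-order definable). By second-order reflection, $\Phi$ reflects to some inaccessible $V_\kappa$, so inside $\<V_\kappa,\in,V_{\kappa+1}>$ we have that $C\intersect\kappa$ is club in $\kappa$ and $S_n\intersect\kappa$ is stationary in $\kappa$ and consists of cardinals that are $\Pi^1_n$-indescribable \emph{as computed in $V_\kappa$}; by upward absoluteness of $\Pi^1_n$-indescribability from $V_\kappa$ to $V$ at such cardinals (the witnessing substructure argument goes through since $V_\kappa$ is a $\ZFC$ model with the right closure), these cardinals below $\kappa$ are genuinely $\Pi^1_n$-indescribable. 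Now I argue $\kappa$ itself is $\Pi^1_{n+1}$-indescribable: given a $\Pi^1_{n+1}$ statement $\psi$ true in $\<V_\kappa,\in,A>$ for some $A\of V_\kappa$, write $\psi=\forall X\,\theta(X)$ with $\theta$ of complexity $\Pi^1_n$ (absorbing the leading block), and reflect the truth of $\psi$ together with ``$S_n$ is stationary'' down further using the induction hypothesis applied \emph{inside} $V_\kappa$—more precisely, one finds a stationary set of $\alpha<\kappa$ below which $\psi$ restricted is a true $\Pi^1_n$ statement, hence reflected by an element of $S_n\intersect\kappa$, giving an $\alpha<\kappa$ with $\<V_\alpha,\in,A\intersect V_\alpha>\satisfies\psi$. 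Finally $\kappa$ is a limit point of $C$ since $C\intersect\kappa$ was club, so $\kappa\in C$, and $\kappa$ is $\Pi^1_{n+1}$-indescribable; since $C$ was arbitrary, the $\Pi^1_{n+1}$-indescribable cardinals form a stationary proper class.

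There is a bookkeeping subtlety I would be careful about: the whole argument is a theorem \emph{scheme}, one instance per $n$, and at stage $n+1$ I am invoking the statement of stage $n$ as a reflectable second-order assertion, so I must confirm that $\Pi^1_n$-indescribability is expressible by a single second-order formula (it is: ``for every $\Pi^1_n$ formula'' is a quantifier over a metatheoretically fixed finite set of syntactic objects, and one uses a partial satisfaction class for $\Pi^1_n$ truth, available in $\KM$) and that the implication ``second-order reflection $\Rightarrow$ stationarily many $\Pi^1_n$-indescribables'' is itself formalizable so that its conclusion can serve as a parameter-free hypothesis at the next stage.

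\begin{proof}
We argue by induction on $n$, proving each instance of the scheme from $\KM$ plus second-order reflection.

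For $n=0$: a $\Pi^1_0$-indescribable cardinal is just an inaccessible cardinal (a true $\Pi^1_0$ statement in $\<V_\kappa,\in,A>$ is first-order with a class parameter, reflected by ordinary \Levy-Montague reflection once $\kappa$ is inaccessible). As observed before the theorem, second-order reflection implies that every class club $C\of\Ord$ contains an inaccessible cardinal: reflect ``$C$ is a class club'' to an inaccessible $V_\kappa$; then $\kappa$ is a limit point of $C$, hence $\kappa\in C$. So the inaccessible cardinals form a stationary proper class, establishing the case $n=0$.

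Now suppose the result holds for $n$; we prove it for $n+1$. Note first that $\Pi^1_n$-indescribability is expressible by a single formula of second-order set theory: using the $\KM$ satisfaction class for $\Pi^1_n$ truth in structures $\<V_\alpha,\in,A>$, the statement ``$\alpha$ is $\Pi^1_n$-indescribable'' becomes a genuine (parameter-free) second-order assertion. Let $S_n$ denote the class of $\Pi^1_n$-indescribable cardinals, a second-order definable class, which by induction hypothesis is a stationary proper class.

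Fix an arbitrary class club $C\of\Ord$; we must find a $\Pi^1_{n+1}$-indescribable cardinal in $C$. Let $\Phi(C)$ be the conjunction of: \GBC\ holds; $C$ is closed and unbounded in $\Ord$; and $S_n$ is a stationary proper class of $\Pi^1_n$-indescribable cardinals. This is a second-order assertion true in $\<V,\in,\mathcal{V}>$ (with parameter $C$), so by second-order reflection it reflects to an inaccessible cardinal $\kappa$, meaning $\<V_\kappa,\in,V_{\kappa+1}>\satisfies\Phi(C\intersect V_\kappa)$. Thus $C\intersect\kappa$ is club in $\kappa$, and so $\kappa$ is a limit point of $C$ and $\kappa\in C$. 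Also, $S_n\intersect\kappa$ is stationary in $\kappa$, and every $\alpha\in S_n\intersect\kappa$ is $\Pi^1_n$-indescribable as computed in $V_\kappa$; since $V_\kappa$ is a model of \ZFC\ agreeing with $V$ on $V_\alpha$ and on $\Pi^1_n$ truth over $\<V_\alpha,\in,A\intersect V_\alpha>$ for $A\of V_\kappa$, each such $\alpha$ is genuinely $\Pi^1_n$-indescribable.

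It remains to see that $\kappa$ is $\Pi^1_{n+1}$-indescribable. Let $A\of V_\kappa$ and let $\psi$ be a $\Pi^1_{n+1}$ sentence with $\<V_\kappa,\in,A>\satisfies\psi$. Write $\psi$ in the form $\forall X\,\theta(X)$ where $\theta$ is $\Pi^1_n$. Consider the first-order-over-$\<V_\kappa,\in,V_{\kappa+1}>$ class $T$ of those $\alpha<\kappa$ such that $\<V_\alpha,\in,A\intersect V_\alpha>\not\satisfies\psi$; if $T$ were stationary in $\kappa$ it would meet the club of $\alpha<\kappa$ closed under the relevant Skolem functions for $\theta$, yielding—by the usual condensation argument inside $V_\kappa$—an $\alpha$ with $\<V_\alpha,\in,A\intersect V_\alpha>\satisfies\theta(X_0)$ failing for some $X_0$ while the corresponding instance is reflected; more directly, pick any $\alpha\in S_n\intersect\kappa$ above a suitable closure point. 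Given a counterexample $X_0\of V_\alpha$ to $\theta$ in $\<V_\alpha,\ldots>$ one gets a false $\Pi^1_n$ statement where a true one should be, contradicting $\Pi^1_n$-indescribability of $\alpha$ applied to the $\Sigma^1_n$ statement $\exists X\,\lnot\theta(X)$. Hence $T$ is nonstationary, in fact there are stationarily many $\alpha<\kappa$ with $\<V_\alpha,\in,A\intersect V_\alpha>\satisfies\psi$, and in particular such an $\alpha$ exists. This shows $\kappa$ is $\Pi^1_{n+1}$-indescribable.

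Therefore $\kappa\in C$ is $\Pi^1_{n+1}$-indescribable; since $C$ was an arbitrary class club, the $\Pi^1_{n+1}$-indescribable cardinals form a stationary proper class. This completes the induction and the proof of the scheme.
\end{proof}
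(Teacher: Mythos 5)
Your overall framework (a theorem scheme, one instance per metatheoretic $n$, with $\Pi^1_n$-indescribability expressed via a partial satisfaction predicate available in \KM) is fine, and your base case is correct. But the successor step rests on a false implication: you try to conclude that $\kappa$ is $\Pi^1_{n+1}$-indescribable from the fact that $S_n\intersect\kappa$ is stationary in $\kappa$, and this does not follow. Having stationarily many $\Pi^1_n$-indescribable cardinals below an inaccessible $\kappa$ does not make $\kappa$ even $\Pi^1_1$-indescribable: if $\kappa$ is the least inaccessible cardinal below which the $\Pi^1_n$-indescribables are stationary, then the $\Pi^1_1$ assertion ``the predicate is stationary'' over $\<V_\kappa,\in,S_n\intersect\kappa>$ would reflect to a smaller inaccessible, contradicting minimality; so that least $\kappa$ is not $\Pi^1_1$-indescribable, let alone $\Pi^1_{n+1}$-indescribable. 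Your attempted verification of the step exposes exactly where it breaks: given $\alpha\in S_n\intersect\kappa$ and a counterexample $X_0\of V_\alpha$ to $\theta$ in $V_\alpha$, you claim a contradiction with the $\Pi^1_n$-indescribability of $\alpha$, but indescribability of $\alpha$ only reflects statements that are \emph{true} in $V_\alpha$ down to smaller $V_\beta$; it gives no reason why $\theta(X_0)$ should hold in $V_\alpha$ merely because $\forall X\,\theta(X)$ holds in $V_\kappa$ --- that downward transfer from $\kappa$ to $\alpha$ is precisely what you are trying to establish, so the argument is circular.

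The idea you are missing is to reflect the reflection property itself rather than the class of indescribables. Working in \KM\ with a universal $\Pi^1_n$ satisfaction predicate, the statement ``every $\Pi^1_n$ assertion about any class parameter $Y$ reflects from $\<V,\in,Y>$ to some inaccessible $V_\gamma$ with $\gamma\in C$'' is a single second-order assertion, and it is true by a first application of second-order reflection (conjoining ``$C$ is a class club'' so that the reflecting inaccessible lands in $C$). Reflecting \emph{that} statement to an inaccessible $V_\kappa$ with $\kappa\in C$ yields directly that every $Y\of V_\kappa$ and every $\Pi^1_n$ sentence true in $\<V_\kappa,\in,Y>$ reflects to some inaccessible $V_\gamma$ with $\gamma<\kappa$, which is precisely the $\Pi^1_n$-indescribability of $\kappa$. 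This is the paper's (non-inductive) argument: one double application of reflection handles each $n$ uniformly and avoids the false stationarity-to-indescribability inference entirely.
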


\begin{proof}
Assume \KM\ plus second-order reflection. Fix any class club $C\of\Ord$ and any class $X\of\Ord$. By the reflection property, any $\Pi^1_n$ assertion $\varphi(X)$ reflects down to some $V_\kappa$ with $\kappa$ inaccessible and in $C$. By reflecting \emph{that} property, using a universal $\Pi^1_n$ predicate, we find such a $\kappa$ such that every subset $Y\of\kappa$ reflects every $\Pi^1_n$ property from $\<V_\kappa,\in,Y>$ to some smaller inaccessible $\<V_\gamma,\in,Y\intersect\gamma>$, with $\gamma\in C$. Thus, $\kappa$ is $\Pi^1_n$-indescribable and in $C$. So the class of them is stationary.
\end{proof}

For a quick upper bound on the strength of second-order reflection, let us observe the following; see also \cite{SolovayReinhardtKanamori1978:Strong-axioms-of-infinity-and-elementary-embeddings}.

\begin{theorem}
If $\kappa$ is a measurable cardinal, then $\<V_\kappa,\in,V_{\kappa+1}>$ is a model of $\KM+\CC$ plus the second-order reflection principle.
\end{theorem}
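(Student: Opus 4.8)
\textit{Proof proposal.} The plan is to use the measurability of $\kappa$ in the standard way, through the ultrapower embedding, reflecting second-order truths of $\<V_\kappa,\in,V_{\kappa+1}>$ down to a smaller inaccessible cardinal. I would first dispose of the routine parts. Since a measurable cardinal is inaccessible, $\<V_\kappa,\in,V_{\kappa+1}>$ is a model of \KM\ by the observation recalled above. For the class choice principle \CC, note that the classes of this model are all of $P(V_\kappa)$: given an instance with index class $I\of V_\kappa$ and the hypothesis that each $i\in I$ has a class witness $X$ with $\varphi(i,X,I)$, apply the axiom of choice in $V$ to select for each $i\in I$ some $X_i\of V_\kappa$ with $\varphi(i,X_i,I)$, and set $X=\set{(i,x)\mid i\in I,\ x\in X_i}$. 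Because $\kappa$ is inaccessible, $X$ has size at most $\kappa$ and hence, under the usual coding of pairs inside $V_\kappa$, is an element of $V_{\kappa+1}$; it witnesses the given instance of \CC.

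The heart of the matter is second-order reflection. Fix a normal ultrafilter $U$ on $\kappa$ and let $j\colon V\to M=\Ult(V,U)$ be the associated ultrapower embedding, so $j$ has critical point $\kappa$, $\kappa<j(\kappa)$, and ${}^\kappa M\of M$. The key structural fact I would isolate first is that $M$ computes the relevant structure correctly: since $\Card{V_\kappa}=\kappa$ and $M$ is closed under $\kappa$-sequences, every subset of $V_\kappa$ in $V$ lies in $M$, so $P(V_\kappa)^M=P(V_\kappa)^V$, hence $V_\kappa^M=V_\kappa$ and $V_{\kappa+1}^M=V_{\kappa+1}$; moreover $\kappa$ remains inaccessible in $M$, since any witness to its singularity or to some small power set being large would already be an element of $V_{\kappa+1}$, which $M$ and $V$ share. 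Because the second-order satisfaction relation for the set-sized structure $\<V_\kappa,\in,V_{\kappa+1}>$ is computed by a recursion on formulas whose quantifiers range only over $V_\kappa$ and over $P(V_\kappa)$, this satisfaction relation is therefore absolute between $M$ and $V$.

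Now suppose $\<V_\kappa,\in,V_{\kappa+1}>\satisfies\varphi(X)$ for some second-order formula $\varphi$ and some $X\in V_{\kappa+1}$. Since $j\restrict V_\kappa$ is the identity, $j(X)\intersect V_\kappa=X$, so by the absoluteness just noted $M$ satisfies $\<V_\kappa,\in,V_{\kappa+1}>\satisfies\varphi(j(X)\intersect V_\kappa)$; as $\kappa$ is inaccessible in $M$ and $\kappa<j(\kappa)$, this means $M$ satisfies ``there is an inaccessible cardinal $\gamma<j(\kappa)$ with $\<V_\gamma,\in,V_{\gamma+1}>\satisfies\varphi(j(X)\intersect V_\gamma)$,'' witnessed by $\gamma=\kappa$. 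Applying the elementarity of $j$ in reverse (the coded formula $\varphi$ is fixed by $j$, and the parameters $j(\kappa),j(X)$ pull back to $\kappa,X$), the universe $V$ satisfies ``there is an inaccessible cardinal $\gamma<\kappa$ with $\<V_\gamma,\in,V_{\gamma+1}>\satisfies\varphi(X\intersect V_\gamma)$.'' Since $\gamma<\kappa$, this is exactly an instance of the second-order reflection principle as it is to be read inside the model $\<V_\kappa,\in,V_{\kappa+1}>$, reflecting $\varphi(X)$ to the inaccessible cardinal $\gamma$ of the model. The single embedding $j$ works uniformly for every $\varphi$ and every $X\in V_{\kappa+1}$, so the entire reflection scheme holds in $\<V_\kappa,\in,V_{\kappa+1}>$.

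I expect the only genuinely delicate point to be the absoluteness of second-order satisfaction between $M$ and $V$, which is precisely why I would prove $P(V_\kappa)^M=P(V_\kappa)^V$ as an explicit preliminary; once that is in hand, everything else is routine manipulation of the ultrapower embedding.
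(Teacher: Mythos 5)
Your proof is correct and follows essentially the same route as the paper's: reflect via a measurable embedding $j:V\to M$, observe that $M$ computes $V_{\kappa+1}$ (hence second-order satisfaction over $V_\kappa$) correctly so that $\kappa$ itself witnesses reflection below $j(\kappa)$ inside $M$, and pull back by elementarity. You merely make explicit two points the paper leaves implicit, namely the absoluteness of $P(V_\kappa)$ between $M$ and $V$ and the routine verification of $\KM+\CC$.
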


\begin{proof}
Suppose that $\kappa$ is a measurable cardinal and $\varphi(X)$ holds in $V_\kappa$ for some second-order assertion $\varphi$, with $X\of V_\kappa$. Let $j:V\to M$ be an elementary embedding with critical point $\kappa$. So $\varphi(j(X))$ holds in $M_{j(\kappa)}$, equipped with all its subsets in $M$. But notice that $V_\kappa$ and $V_{\kappa+1}$ both exist in $M$, and so $M$ can see that $\varphi(j(X))$ reflects from $M_{j(\kappa)}$ down to the structure $\<V_\kappa,\in,V_{\kappa+1}>$, since $j(X)\intersect V_\kappa=X$. So $M$ thinks the assertion $\varphi(j(X))$ reflects to an inaccessible cardinal below $j(\kappa)$. By elementarity, it follows in $V$ that $\varphi(X)$ must reflect from $V_\kappa$ to some inaccessible $V_\delta$ below $\kappa$, as desired.
\end{proof}

Kanamori \cite[exercise~9.18]{Kanamori2004:TheHigherInfinite2ed} shows that an $\omega$-\Erdos\ cardinal suffices for this, and this is interesting because these cardinals are consistent with $V=L$ and consequently the large cardinal upper bound on the strength of second-order reflection is comparatively low in the large cardinal hierarchy, amongst the large cardinals that can exist in $L$.

Let us also observe a curiosity, namely, that the second-order reflection principle erases the difference between \GBC\ and \KM. Indeed, one even gets global choice, as a consequence of \AC\ for sets only, as well as the principle of class choice \CC, all for free as a consequence of second-order reflection. Let \GBc\ be the theory $\GB+\AC$, that is, where we omit global choice and have just \AC\ for sets.\goodbreak

\begin{theorem}\label{Theorem.GBc+reflection=KMU+CC}
Over \Godel-Bernays set theory $\GBc$, the second-order reflection principle implies global choice, second-order class comprehension, and class choice. In short, \GBc\ plus second-order reflection is equivalent to $\KM+\CC$ plus second-order reflection. Similarly, in the urelement context $\textup{GBcU}$ plus second-order reflection is equivalent to $\KMU+\CC$ plus second-order reflection.
\end{theorem}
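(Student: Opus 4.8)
The plan is to prove all three conclusions by a single device: if one of the principles fails in $\<V,\in,\mathcal{V}>$, then the \emph{failure} of that principle is itself expressible as a second-order assertion with a class parameter, hence is a true second-order statement of the universe, and so by the second-order reflection principle it reflects down. The point is that "there is no class $Y$ such that $\ldots$'', "there is no class that is a global well-order'', and "no class chooses witnesses for this family'' are all perfectly good second-order formulas, so they fall under the reflection scheme. I will use the equivalent form of second-order reflection established above, so that the statement reflects to an inaccessible cardinal $\<V_\kappa,\in,V_{\kappa+1}>$ --- a structure carrying \emph{all} subsets of $V_\kappa$ as its classes and (using \AC\ for sets, which is available in $\GBc$, together with inaccessibility, so $V_\kappa$ is closed under ordered pairs) itself a model of $\KM+\CC$. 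The contradiction in each case will come from the fact that such a reflecting structure already contains the class whose nonexistence was reflected.

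For second-order class comprehension, fix a second-order formula $\varphi(x,Z)$ and a class parameter $Z\in\mathcal{V}$ (folding several parameters into one if necessary), and suppose toward contradiction that $\set{x\mid\varphi(x,Z)}$ is not a class in $\mathcal{V}$. Then $\<V,\in,\mathcal{V}>$ satisfies the second-order assertion "for every class $Y$ there is a set $x$ with $x\in Y\iff\neg\varphi(x,Z)$''. Reflecting this assertion (with $Z$) down gives an inaccessible $\kappa$ with $\<V_\kappa,\in,V_{\kappa+1}>$ satisfying the same assertion about $Z\intersect V_\kappa$. But now put $Y_0=\set{x\in V_\kappa\mid \<V_\kappa,\in,V_{\kappa+1}>\satisfies\varphi(x,Z\intersect V_\kappa)}$; this is a subset of $V_\kappa$, hence an element of $V_{\kappa+1}$, i.e.\ one of the classes of the reflecting structure, and it plainly witnesses the failure of the reflected assertion --- contradiction. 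Hence $\mathcal{V}$ is closed under second-order definitions.

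Global choice and class choice follow by the same template. If no class of $\mathcal{V}$ well-orders the universe, then "there is no class that is a well-order of the universe'' is a true second-order assertion; reflecting it down to an inaccessible $\<V_\kappa,\in,V_{\kappa+1}>$ yields a structure with no class well-ordering $V_\kappa$, yet \AC\ for sets provides a well-order of the set $V_\kappa$, which (by closure of $V_\kappa$ under ordered pairs) is a subset of $V_\kappa$ and so lies in $V_{\kappa+1}$ --- contradiction. For class choice, given an instance of \CC\ whose hypothesis holds but whose conclusion fails, the conjunction "[the hypothesis holds] and [there is no choosing class]'' is a true second-order assertion with parameter $I$; reflecting it to an inaccessible $\<V_\kappa,\in,V_{\kappa+1}>$ produces a structure in which the hypothesis of the instance still holds but no choosing class exists --- yet, well-ordering $V_{\kappa+1}$ in $V$, choosing for each $i\in I\intersect V_\kappa$ the least class-witness, and gluing these along $I\intersect V_\kappa$, produces a subset of $V_\kappa$ which is a choosing class of $\<V_\kappa,\in,V_{\kappa+1}>$, again a contradiction. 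Together these show $\<V,\in,\mathcal{V}>\satisfies\KM+\CC$, and since the second-order reflection principle obviously still holds there, $\GBc$ plus second-order reflection is equivalent to $\KM+\CC$ plus second-order reflection (the reverse implication being trivial, as $\GBc\of\KM+\CC$).

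For the urelement version, one runs the identical argument over $\textup{GBcU}$ using the urelement form of second-order reflection, which reflects a true second-order assertion to a transitive \emph{set} $v$ with $\<v,\in,P(v)>$ equipped with all its subsets as classes. The one point requiring care --- and the main obstacle --- is to guarantee that the witnessing objects produced in the contradictions (the defined class, the global well-order, the choosing class), which are built from ordered pairs of elements of $v$, actually lie inside $v$ and hence count as genuine classes of $\<v,\in,P(v)>$. Unlike the inaccessible $V_\kappa$ of the pure case, a bare transitive set need not be closed under pairing, so one arranges this by folding into the reflected second-order assertion a clause asserting that the structure satisfies enough set theory (pairing and union suffice, or all of $\textup{GBcU}$ if one prefers); the reflecting $v$ is then closed under pairing. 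As before one uses \AC\ for sets, part of $\textup{GBcU}$, to well-order $v$ and to make the boundedly-many choices. With $v$ so arranged the three contradictions go through verbatim, and we conclude that $\textup{GBcU}$ plus second-order reflection is equivalent to $\KMU+\CC$ plus second-order reflection.
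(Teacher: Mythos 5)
Your proposal is correct and takes essentially the same route as the paper: reflect the purported failure of each principle to an inaccessible $\<V_\kappa,\in,V_{\kappa+1}>$ (resp.\ a structure $\<v,\in,P(v)>$ of the form $H_\kappa(w)$ in the urelement case), which, carrying \emph{all} subsets of its domain and using \AC\ for sets, already satisfies $\KM+\CC$ with global choice, contradicting the reflected assertion. The paper states this in three sentences; you have simply spelled out the three contradictions explicitly, including the correct care about closure under pairing in the urelement case.
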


\begin{proof}
Assume \GBc\ and the second-order reflection principle. If there was no global well order of the universe, or if any particular axiom of \KM\ failed or any instance of \CC\ failed, then this assertion would reflect to some inaccessible $V_\kappa$. But every such model $\<V_\kappa,\in,V_{\kappa+1}>$ satisfies \KM+\CC, and global choice holds here by the axiom of choice in $V$. So those failures couldn't have occurred up in $V$ in the first place. The same idea works in the urelement context (see section \ref{Section.Second-order-reflection-with-urelements}), since the reflection set will have to have the form $H_\kappa(w)$ with an inaccessible cardinal $\kappa$ and a set of urelements $w$, and all such models similarly fulfill $\KMU+\CC$.
\end{proof}

\section{Second-order reflective cardinals and supercompactness}

We introduce a notion of second-order reflection from higher structures to smaller substructures, which will turn out to characterize the supercompact cardinals.

\begin{definition}\
\begin{enumerate}
  \item A cardinal $\kappa$ is \emph{second-order reflective}, if every second-order sentence $\varphi$ true in some structure $M$ (of any size) with $\kappa\of M$ in a language of size less than $\kappa$ is also true in a first-order elementary substructure $m\elesub M$ of size less than $\kappa$ and with $m\intersect\kappa\in\kappa$.
  \item The cardinal $\kappa$ is \emph{second-order $\lambda$-reflective}, for a cardinal $\lambda\geq\kappa$, if such reflection occurs for all models $M$ of size $\lambda$.
  \item The cardinal $\kappa$ is reflective or $\lambda$-reflective \emph{for $\Pi^1_n$ assertions}, if such reflection occurs for all sentences $\varphi$ of complexity $\Pi^1_n$.
\end{enumerate}
\end{definition}

One can equivalently drop the requirement that $m$ is elementary in $M$, requiring only that $m$ is a submodel, by Skolemizing the language to make these notions agree. It is also equivalent to consider only finite languages, rather than languages of size less than $\kappa$, since one can index the language elements by ordinals below some $\gamma<\kappa$, with $\gamma$ named as a constant, and then $m\intersect\kappa\in\kappa$ will ensure that all desired indices are in~$m$. One can accommodate individuals $a\in M$ and class parameters $X\of M$ into $\varphi$ simply by including them in the signature $\<M,a,X,\dots>$. And it is equivalent to ask for the absoluteness of a formula $\varphi(x)$ between $m$ and $M$, rather than merely a sentence, simply by including the class of satisfying instances into the language.

The $\lambda$-reflective cardinals have a natural affinity with Magidor's \cite{Magidor1971:On-the-role-of-supercompact-and-extendible-cardinals} characterization of supercompactness, namely, that $\kappa$ is supercompact if and only if for every ordinal $\lambda\geq\kappa$ there is $\alpha<\kappa$ and an elementary embedding $j:V_\alpha\to V_\lambda$ sending its critical point to $\kappa$, and more to the point, an affinity with Magidor's proof that the smallest supercompact cardinal is the least cardinal $\kappa$ such that every $\Pi^1_1$ sentence true in any structure $M$ (in a finite language) is true in a substructure of size less than $\kappa$. Despite the similarity of this hypothesis with our notion of $\lambda$-reflectivity above---it lacks only the requirement that $m\intersect\kappa\in\kappa$---our notion exactly does not appear in \cite{Magidor1971:On-the-role-of-supercompact-and-extendible-cardinals}. Nevertheless, as we prove in theorem \ref{Theorem.Reflective-supercompact} and corollary \ref{Corollary.Reflective-supercompact}, the concept of $\lambda$-reflectivity successfully characterizes supercompactness generally, not just for the least supercompact cardinal. Furthermore, theorem \ref{Theorem.Reflective-nearly-supercompact} provides a level-by-level characterization of the $\lambda$-reflective cardinals as exactly the nearly $\lambda$-supercompact cardinals. Our results can therefore be taken as a refinement of Magidor's characterizations.\goodbreak

\begin{theorem}\ \label{Theorem.Reflective-supercompact}
 \begin{enumerate}
   \item Every $\lambda$-supercompact cardinal $\kappa$ is second-order $\lambda$-reflective.
   \item Every cardinal $\kappa$ that is $2^{\lambda^{<\kappa}}\!\!$-reflective for $\Pi^1_1$ assertions is $\lambda$-supercompact.
 \end{enumerate}
\end{theorem}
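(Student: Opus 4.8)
The plan is to handle the two items separately.

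\smallskip
\noindent\emph{Item (1): a $\lambda$-supercompact cardinal is second-order $\lambda$-reflective.} Fix a $\lambda$-supercompactness embedding $j\colon V\to N$, so $\cp(j)=\kappa$, $j(\kappa)>\lambda$, and ${}^{\lambda}N\of N$. Let $M$ be a structure of size $\lambda$ with $\kappa\of M$ in a language of size less than $\kappa$, and let $\varphi$ be a second-order sentence true in $M$; since the signature and $\varphi$ are coded by ordinals below $\kappa$, they are fixed by $j$. The plan is to use $m:=j\image M$ as the witness inside $N$ and then pull back along $j$. First, $m\in N$ and $j\restrict M\in N$, each being the pointwise image of a size-$\lambda$ set under $j$, hence coded by a $\lambda$-sequence of members of $N$; so $N$ computes $\Card m=\Card M=\lambda<j(\kappa)$. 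Second, $m\elesub j(M)$ as a first-order structure, because $j$ is elementary and first-order satisfaction is absolute for the transitive $N$. Third, $m\satisfies\varphi$: the map $j\restrict M\colon M\to m$ is an isomorphism, so $m\satisfies\varphi$ holds in $V$, and ${}^{\lambda}N\of N$ puts every subset of the size-$\lambda$ set $m$ and of its finite powers into $N$, so $m\satisfies\varphi$ is absolute between $V$ and $N$. Fourth, $m\intersect j(\kappa)=\kappa\in j(\kappa)$, since $j\restrict\kappa=\id$ gives $j\image M\intersect j(\kappa)=\set{j(a)\mid a\in M\text{ an ordinal }<\kappa}=M\intersect\kappa=\kappa$. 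Thus $N$ satisfies ``$j(M)$ has a first-order elementary substructure of size less than $j(\kappa)$, meeting $j(\kappa)$ in an ordinal, that satisfies $\varphi$,'' and by elementarity of $j$ the same statement holds of $M$, $\kappa$, $\varphi$ in $V$. Hence $\kappa$ is second-order $\lambda$-reflective.

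\smallskip
\noindent\emph{Item (2): $2^{\lambda^{<\kappa}}$-reflective for $\Pi^1_1$ implies $\lambda$-supercompact.} Put $\theta=2^{\lambda^{<\kappa}}$ and let $\mathfrak M=\<\mathcal H,\in,\triangleleft,\kappa,\lambda>$, where $\mathcal H$ is a transitive set of size $\theta$ containing $\lambda+1$, the collection $\mathcal P_\kappa(\lambda)$ of subsets of $\lambda$ of size less than $\kappa$, its full power set $\mathcal P(\mathcal P_\kappa(\lambda))$, all functions $\mathcal P_\kappa(\lambda)\to\lambda$, all ${<}\kappa$-sequences of subsets of $\mathcal P_\kappa(\lambda)$, and $V_\kappa$; here $\triangleleft$ well-orders $\mathcal H$ in type $\theta$ (giving definable Skolem functions) and $\kappa,\lambda$ are constants. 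Over $\mathfrak M$ the assertion ``there is a normal fine $\kappa$-complete ultrafilter on $\mathcal P_\kappa(\lambda)$'' is $\Sigma^1_1$ — the ultrafilter is a second-order object selecting elements of $\mathcal P(\mathcal P_\kappa(\lambda))$, and the fineness, normality, and $\kappa$-completeness clauses become \emph{first-order} because the witnessing functions and short sequences are elements of $\mathcal H$ — so its negation $\varphi$ is $\Pi^1_1$. Assuming toward contradiction that $\kappa$ is not $\lambda$-supercompact, $\mathfrak M\satisfies\varphi$; by $2^{\lambda^{<\kappa}}$-reflectivity for $\Pi^1_1$ assertions fix $m\elesub\mathfrak M$ with $\Card m<\kappa$, $m\intersect\kappa\in\kappa$, and $m\satisfies\varphi$. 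Let $\pi\colon m\cong\bar m$ be the transitive collapse and $j=\pi^{-1}\colon\bar m\to\mathfrak M$; since $\kappa,\lambda$ are named and $m\intersect\kappa=\delta\in\kappa$, one gets $\cp(j)=\delta$ and $j(\delta)=\kappa$ (a Magidor-style embedding), and for $\bar\lambda:=j^{-1}(\lambda)$ the seed $s:=j\image\bar\lambda=m\intersect\lambda$ is a subset of $\lambda$ of size $\bar\lambda<\kappa$, hence an element of $\mathfrak M$'s copy of $\mathcal P_\kappa(\lambda)$. One checks that $W:=\set{X\in(\mathcal P(\mathcal P_\kappa(\lambda)))^{\bar m}\mid s\in j(X)}$ is a normal fine $\delta$-complete ultrafilter on $(\mathcal P_\delta(\bar\lambda))^{\bar m}$; the remaining task is to parlay this into a genuine normal fine $\kappa$-complete ultrafilter on all of $\mathcal P_\kappa(\lambda)$ in $V$, contradicting the reflected assumption and finishing the proof.

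\smallskip
\noindent\emph{The main obstacle} is exactly this last step: converting a measure defined only on the fewer-than-$\kappa$-many subsets of $\mathcal P_\delta(\bar\lambda)$ visible in the small collapse $\bar m$ into an honest ultrafilter measuring \emph{every} subset of $\mathcal P_\kappa(\lambda)$, while retaining fineness, normality, and $\kappa$-completeness. Both hypotheses enter here: the size bound $\theta=2^{\lambda^{<\kappa}}$ guarantees that $\mathfrak M$ genuinely sees all of $\mathcal P(\mathcal P_\kappa(\lambda))$, so the failure of $\lambda$-supercompactness that $\mathfrak M$ reports is the real one, and the condition $m\intersect\kappa\in\kappa$ pins $\cp(j)$ below $\kappa$ with $j(\cp(j))=\kappa$, which is what ties the construction to $\kappa$ itself. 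The cleanest route is to factor through near supercompactness: reflectivity at level $\theta$ for $\Pi^1_1$ yields that $\kappa$ is nearly $\theta$-supercompact (with $\bar m$, suitably arranged to be $\ltkappa$-closed, and $\mathfrak M$ as target, providing the witnessing embedding), and then one applies the standard transfer, due to Schanker, that a nearly $2^{\gamma^{<\kappa}}$-supercompact cardinal is $\gamma$-supercompact, taking $\gamma=\lambda$; this matches theorem~\ref{Theorem.Reflective-nearly-supercompact}'s level-by-level correspondence together with the near-supercompact-to-supercompact step.
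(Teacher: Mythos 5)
Your item (1) is correct and is essentially the paper's argument: pass to $j\image M\elesub j(M)$ inside $N$, note that it lies in $N$, has size $\lambda<j(\kappa)$, meets $j(\kappa)$ in the ordinal $\kappa$, and still satisfies $\varphi$ because ${}^\lambda N\of N$ gives $N$ all the relevant subsets; then pull back by elementarity.

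Item (2) has a genuine gap, and the step you single out as ``the main obstacle'' is in fact not an obstacle the proof needs to overcome. Reflection hands you that the $\Pi^1_1$ sentence $\varphi$ (``there is no normal fine $\kappa$-complete ultrafilter on $P_\kappa\lambda$'') is true \emph{in $m$}: its second-order quantifier ranges over subsets of $m$, and its first-order matrix is evaluated with quantifiers restricted to $m$. To refute this you do not need to manufacture a genuine ultrafilter on all of $P_\kappa\lambda$ in $V$; you only need a single subset of $m$ that $m$ itself regards as such an ultrafilter. You already have it in hand: with $s=m\intersect\lambda$, the class $\mu_m=\set{X\in m\mid X\of P_\kappa\lambda,\ s\in X}$ (the pushforward of your $W$ along $j$) is a subset of $m$, and one verifies directly that from $m$'s point of view it is an ultrafilter ($s\in X$ or $s\in P_\kappa\lambda\setminus X$ for each $X\in m$), $\kappa$-complete (a $\beta$-sequence in $m$ with $\beta<\kappa$ has $\beta\of m$ because $m\intersect\kappa\in\kappa$, so every entry lies in $m$ and contains $s$), fine (every ordinal of $m$ below $\lambda$ lies in $s$), and normal (a regressive $f\in m$ has $f(s)\in s\of m$, and the set $\set{x\mid f(x)=f(s)}$ is then definable in $m$ and contains $s$). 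This contradicts $m\satisfies\varphi$ on the spot, which is exactly where the paper's proof ends and where your write-up instead veers off toward building a $V$-measure.

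Your proposed repair---derive near $2^{\lambda^{<\kappa}}$-supercompactness and invoke Schanker's transfer to $\lambda$-supercompactness---is a mathematically viable alternative route, but as written it is a citation rather than a proof, and it is not really a shortcut: the reflectivity-to-near-supercompactness step (the relevant direction of theorem \ref{Theorem.Reflective-nearly-supercompact}) is itself proved by precisely the internal-to-$m$ verification of $\mu_m$ described above. Moreover your parenthetical identifying $\bar m$ as the domain of the witnessing near-supercompactness embedding cannot be right: such an embedding must have critical point $\kappa$ and a transitive, $\ltkappa$-closed domain containing the relevant subset of $2^{\lambda^{<\kappa}}$, whereas your $j\colon\bar m\to\mathfrak M$ has critical point $\delta=m\intersect\kappa<\kappa$ and $\Card{\bar m}<\kappa$. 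So either complete the direct contradiction inside $m$, or genuinely prove the near-supercompactness step and the transfer; as it stands the argument is unfinished.
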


\begin{proof}
(1) Suppose that $\kappa$ is $\lambda$-supercompact, and consider any structure $M=\<\lambda,R,f,\dots>$ on domain $\lambda$, with some second order statement $\varphi$ being true in $M$. Let $j:V\to N$ be a $\lambda$-supercompactness embedding. So $j(M)=\<j(\lambda),j(R),j(f),\dots>$ is a structure in $N$ in the same language, and $\varphi$ holds in $j(M)$. Consider the substructure $j\image M=\<j\image\lambda,j(R),j(f),\dots>$, which is isomorphic to $M$ by the pointwise application map $x\mapsto j(x)$. In particular, $\varphi$ is also true in $j\image M$, and this is visible in $N$, which has all the same subsets of this structure as $V$. Since $j$ is elementary, it follows also that $j\image M$ is a first-order elementary substructure of $j(M)$. It has size $\lambda$, which is less than $j(\kappa)$, and is in $N$, and furthermore, $(j\image\lambda)\intersect j(\kappa)=\kappa\in j(\kappa)$. So in $N$, we have found an elementary substructure of $j(M)$ of size less than $j(\kappa)$, whose intersection with $j(\kappa)$ is an ordinal, to which $\varphi$ reflects. By elementarity, it follows in $V$ that there must be such an elementary substructure $m$ of $M$ of size less than $\kappa$ to which $\varphi$ reflects, as desired. So $\kappa$ is second-order $\lambda$-reflective.

(2) Suppose that $\kappa$ is $2^{\lambda^{<\kappa}}$-reflective for $\Pi^1_1$ assertions, and let $M$ be the structure $\<H_{(\lambda^{<\kappa})^+},\in,\kappa,\lambda>$, which is a transitive structure of size $2^{\lambda^{<\kappa}}$, containing $\lambda$, $P_\kappa\lambda$, and every subset of $P_\kappa\lambda$, and having $\kappa$ and $\lambda$ as named constants. The question whether there is a normal fine measure on $P_\kappa\lambda$ is expressible as a second-order assertion $\varphi$ in this structure, since any such measure is determined by a subset $\mu\of M$, picking out the measure-one subsets of $P_\kappa\lambda$ and asserting that these sets constitute a normal fine measure. Normality and fineness are both first-order properties of $\mu$ over $M$, and so the assertion that $\kappa$ is not $\lambda$-supercompact is $\Pi^1_1$-expressible over $M$. So if $\kappa$ is not $\lambda$-supercompact, then this reflects to some elementary substructure $m\elesub M$ of size less than $\kappa$ with $m\intersect\kappa\in\kappa$---but note that $m$ will not be transitive. Let $s=m\intersect\lambda$, which is an element of $P_\kappa\lambda$, and let $\mu_m=\set{X\in m\mid X\of P_\kappa\lambda, s\in X}$ be the family of sets in $m$ containing $s$, the principal filter. The model $m$ can see that this measure is $\kappa$-complete, because if $\vec X=\<X_\alpha\mid\alpha<\beta>$ is a $\beta$-sequence of subsets $X_\alpha\of P_\kappa\lambda$ that $m$ thinks are all in $\mu_m$, then $\beta\in m$ and consequently  every $\alpha<\beta$ is in $m$, and so every $X_\alpha$ is in $m$. Thus, $s\in X_\alpha$ for every $\alpha<\beta$ and hence also $s\in\Intersect_{\alpha<\beta}X_\alpha$, which is $\Intersect\vec X$ as computed in $m$. Since every ordinal $\alpha<\lambda$ in $m$ is in $s$, it follows that $m$ will think that $\mu_m$ is fine. And since every element of $s$ is in $m$, it follows that $m$ will think that every regressive function is constant on a $\mu_m$-measure one set. So $m$ thinks that $\kappa$ is $\lambda$-supercompact, contrary to the reflection assumption.
\end{proof}\goodbreak

\begin{corollary}\label{Corollary.Reflective-supercompact}
The following are equivalent:
    \begin{enumerate}
     \item $\kappa$ is second-order reflective.
      \item $\kappa$ is reflective for $\Pi^1_1$ assertions.
      \item $\kappa$ is a supercompact cardinal.
    \end{enumerate}
\end{corollary}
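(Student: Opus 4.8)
The plan is to derive the corollary directly from Theorem~\ref{Theorem.Reflective-supercompact}, threading the implications in the cycle $(3)\Rightarrow(1)\Rightarrow(2)\Rightarrow(3)$. The only real content is the two parts of that theorem; everything else is a matter of unwinding the quantifiers hidden in the word ``reflective'' (with no cardinal subscript), which is to say ``$\lambda$-reflective for every $\lambda\geq\kappa$.''

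For $(3)\Rightarrow(1)$, suppose $\kappa$ is supercompact. Then $\kappa$ is $\lambda$-supercompact for every cardinal $\lambda\geq\kappa$, so by Theorem~\ref{Theorem.Reflective-supercompact}(1) it is second-order $\lambda$-reflective for every such $\lambda$. Given any structure $M$ with $\kappa\of M$ in a language of size less than $\kappa$, its domain has some size $\lambda$; we may assume $\lambda\geq\kappa$ (pad the structure if necessary, or note $\kappa\of M$ already forces $|M|\geq\kappa$), and then $\lambda$-reflectivity applies to $M$. Hence $\kappa$ is second-order reflective. The step $(1)\Rightarrow(2)$ is immediate, since every $\Pi^1_1$ sentence is in particular a second-order sentence, so reflectivity for all second-order sentences specializes to reflectivity for $\Pi^1_1$ assertions.

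For $(2)\Rightarrow(3)$, suppose $\kappa$ is reflective for $\Pi^1_1$ assertions, meaning such reflection occurs for structures of arbitrary size. Fix any cardinal $\lambda\geq\kappa$. Then in particular $\kappa$ is $2^{\lambda^{<\kappa}}$-reflective for $\Pi^1_1$ assertions, so Theorem~\ref{Theorem.Reflective-supercompact}(2) tells us $\kappa$ is $\lambda$-supercompact. As $\lambda\geq\kappa$ was arbitrary, $\kappa$ is supercompact.

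I expect essentially no obstacle here: the corollary is a bookkeeping consequence of the theorem. The one point deserving a sentence of care is confirming that the unparametrized notions ``second-order reflective'' and ``reflective for $\Pi^1_1$ assertions'' really do quantify over all structures of all sizes (so that the $\lambda$-indexed versions for every $\lambda\geq\kappa$ are exactly what is being asserted), and that in the definition one may harmlessly assume $\lambda\geq\kappa$ since the hypothesis $\kappa\of M$ already guarantees $|M|\geq\kappa$. With that observed, the three equivalences close up.
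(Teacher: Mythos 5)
Your proposal is correct and matches the paper's own argument, which likewise obtains the cycle by applying Theorem~\ref{Theorem.Reflective-supercompact}(1) level-by-level for supercompactness implies reflectivity, noting that $\Pi^1_1$ reflectivity is a special case, and invoking Theorem~\ref{Theorem.Reflective-supercompact}(2) at $2^{\lambda^{<\kappa}}$ to recover $\lambda$-supercompactness for each $\lambda$. The bookkeeping about unparametrized reflectivity quantifying over all sizes is exactly the right (and only) point of care.
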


\begin{proof}
Theorem \ref{Theorem.Reflective-supercompact} shows that supercompactness implies reflectivity level-by-level, and we only need reflectivity for $\Pi^1_1$ assertions at $2^{\lambda^{<\kappa}}$ to get $\lambda$-supercompactness.
\end{proof}

The second-order reflectivity hierarchy is therefore simply a different, finer manner of stratifying the supercompactness hierarchy. Indeed, let us mount a somewhat more delicate analysis, which will enable us to provide an exact level-by-level equivalence using the notion of near supercompactness. Specifically, Schanker \cite{Schanker2011:Dissertation, Schanker2011:WeaklyMeasurableCardinals, Schanker2013:PartialNearSupercompactness}, in dissertation work undertaken with the first author of this paper, defines that a cardinal $\kappa$ is \emph{nearly $\lambda$-supercompact}, if for every $A\of\lambda$ there is a transitive model $M\satisfies\ZFCm$, closed under $\ltkappa$-sequences, with $\lambda,A\in M$, and there is an elementary embedding $j:M\to N$ with critical point $\kappa$ for which $\lambda<j(\kappa)$ and $j\image\lambda\in N$. He proves an abundance of equivalent characterizations in \cite[theorem~2.1.3]{Schanker2011:Dissertation},
one of which is the normal filter property: $\kappa$ is nearly $\lambda$-supercompact if and only if for every collection $M$ having at most $\lambda$ many subsets of $P_\kappa\lambda$ and at most $\lambda$ many functions $f:P_\kappa\lambda\to\lambda$, there is a $\kappa$-complete fine filter on $P_\kappa\lambda$ measuring every subset in $M$ that is also $M$-normal, meaning that every regressive function $f$ in $M$ is constant on a measure one set.

\begin{theorem}\label{Theorem.Reflective-nearly-supercompact}
Assume $\lambda=\lambda^{<\kappa}$. Then the following are equivalent:
 \begin{enumerate}
     \item $\kappa$ is $\lambda$-reflective for $\Pi^1_1$ assertions
     \item $\kappa$ is nearly $\lambda$-supercompact
 \end{enumerate}
\end{theorem}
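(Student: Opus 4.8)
The plan is to prove the equivalence by two separate arguments, in each direction translating between the ``$\Pi^1_1$ reflection'' formulation and Schanker's normal filter characterization of near supercompactness; the hypothesis $\lambda=\lambda^{<\kappa}$ will be used throughout to keep the relevant structures of size exactly $\lambda$, and I will freely use the remarks after the definition of second-order reflectivity to reduce to a finite language and a single universal second-order quantifier.

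\emph{Direction $(2)\Rightarrow(1)$.} Suppose $\kappa$ is nearly $\lambda$-supercompact and $\varphi=\forall X\,\psi(X)$ is a $\Pi^1_1$ sentence true (against all subsets in $V$) in a structure $M$ of size $\lambda$ with $\kappa\of M$. Passing to an isomorphic copy I may take $M$ to have domain $\lambda$ with $\kappa$ as an initial segment and a finite language, and I code $M$ (with $\varphi$) by a set $A\of\lambda$. Applying near $\lambda$-supercompactness to $A$ gives a transitive $\bar M\satisfies\ZFCm$ closed under ${<}\kappa$-sequences with $\lambda,A\in\bar M$ and $j\colon\bar M\to\bar N$ with critical point $\kappa$, $\lambda<j(\kappa)$, $j\image\lambda\in\bar N$. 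Since $\varphi$ is $\Pi^1_1$ and holds in $M$ in $V$, it holds in $M$ as computed in $\bar M$, so applying $j$ it holds in $j(M)$ as computed in $\bar N$. Now inside $\bar N$ let $m=j\image M$, the substructure of $j(M)$ on domain $j\image\lambda$: it is first-order elementary in $j(M)$ because $j\restrict M$ is an elementary embedding, it has size $\lambda<j(\kappa)$, and $m\intersect j(\kappa)=j\image\kappa=\kappa\in j(\kappa)$. Moreover $\bar N\satisfies$``$m\satisfies\varphi$'': for any $X\in\bar N$ with $X\of m^{<\omega}$, the pointwise preimage $(j\restrict M)^{-1}[X]$ lies in $V$, so $\psi$ holds of it in $M$ by hypothesis, hence $\psi$ holds of $X$ in $m$ via the isomorphism $j\restrict M\colon M\cong m$. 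Thus $\bar N$ satisfies ``there is a first-order elementary substructure of $j(M)$ of size $<j(\kappa)$ whose intersection with $j(\kappa)$ is an ordinal and in which $\varphi$ holds,'' and pulling this back along $j$ gives the corresponding statement about $M$ in $\bar M$. Because $\bar M$ is closed under ${<}\kappa$-sequences it contains every subset of any ${<}\kappa$-sized set, so elementarity, cardinality, the ordinal condition, and $\Pi^1_1$-truth of $\varphi$ in such an $m$ are all absolute between $\bar M$ and $V$; hence $\varphi$ reflects in $V$ to a suitable $m\elesub M$.

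\emph{Direction $(1)\Rightarrow(2)$.} Suppose $\kappa$ is $\lambda$-reflective for $\Pi^1_1$ assertions and, toward a contradiction, $\kappa$ is not nearly $\lambda$-supercompact. By the normal filter property there is a collection $M$ with at most $\lambda$ subsets of $P_\kappa\lambda$ and at most $\lambda$ functions $P_\kappa\lambda\to\lambda$ admitting no $\kappa$-complete fine $M$-normal filter measuring every subset in $M$; enlarging $M$ harmlessly (the bounds survive because $\lambda^{<\kappa}=\lambda$) I may assume $M$ is closed under complements and ${<}\kappa$-intersections and contains every set $\set{s\in P_\kappa\lambda\mid\alpha\in s}$. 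Let $\mathcal A$ be a transitive structure of size $\lambda$ containing $V_\kappa$, $\lambda$, $P_\kappa\lambda$ and $M$, with $\kappa,\lambda,M$ named, so $\kappa\of\mathcal A$. The assertion ``there is no $\kappa$-complete fine $M$-normal filter on $P_\kappa\lambda$ measuring every subset in $M$'' is a $\Pi^1_1$ sentence $\varphi$ over $\mathcal A$: a candidate filter is represented by its trace on the algebra $M$, a subset $\mu$ of $\mathcal A$, and being a proper filter, fine, $M$-normal, measuring $M$, and $\kappa$-complete (using that $M$ is closed under ${<}\kappa$-intersections and that ${<}\kappa$-sequences are elements of $\mathcal A$) are all first-order over $\<\mathcal A,\mu>$. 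By $\lambda$-reflectivity, $\varphi$ holds in some $m\elesub\mathcal A$ of size less than $\kappa$ with $m\intersect\kappa\in\kappa$. I then contradict this: put $s=m\intersect\lambda\in P_\kappa\lambda$ and $\mu_m=\set{X\in m\mid X\of P_\kappa\lambda\ \text{and}\ s\in X}$, a subset of $m$ and hence a legitimate value of the second-order variable. Exactly as in the proof of Theorem~\ref{Theorem.Reflective-supercompact}(2), $m$ thinks $\mu_m$ is a proper $\kappa$-complete fine $M$-normal filter measuring every subset in $M$: $\kappa$-completeness uses $m\intersect\kappa\in\kappa$ to see that any ${<}\kappa$-sequence in $m$ of $\mu_m$-sets has all entries in $m$ hence all containing $s$; fineness holds because $m\intersect\lambda=s$, so every ordinal below $\lambda$ that $m$ sees lies in $s$; and $M$-normality holds because for regressive $f\in m$ the value $f(s)\in s\of m$, so $f^{-1}\{f(s)\}$ is a set in $m$ containing $s$. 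This contradicts $m\satisfies\varphi$, so $\kappa$ is nearly $\lambda$-supercompact.

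The main obstacle I anticipate is the complexity bookkeeping in $(1)\Rightarrow(2)$: one must verify that ``$\kappa$ is not nearly $\lambda$-supercompact'' really is expressible by a genuinely $\Pi^1_1$ — not merely second-order — sentence over a structure of size exactly $\lambda$, which is where $\lambda=\lambda^{<\kappa}$ is essential (to fit $P_\kappa\lambda$, the closed-up collection $M$, and all relevant ${<}\kappa$-sequences into a domain of size $\lambda$) and where one must take care that after the single universal second-order quantifier the matrix remains first-order. A secondary delicate point is the absoluteness transfer in $(2)\Rightarrow(1)$, where closure of $\bar M$ under ${<}\kappa$-sequences is exactly what guarantees that $\Pi^1_1$-truth in the small reflecting substructure $m$ is computed the same way in $\bar M$ as in $V$.
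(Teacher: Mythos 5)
Your proof is correct and follows essentially the same route as the paper's: the forward direction uses the near-supercompactness embedding together with the downward $\Pi^1_1$ transfer to $j\image M$, and the converse uses Schanker's normal filter characterization, reflecting the nonexistence statement and refuting it with the principal filter generated by $s=m\intersect\lambda$. The only quibble is that your transitive structure $\mathcal{A}$ need not (and in general cannot) contain $V_\kappa$, whose cardinality may exceed $\lambda$ before inaccessibility of $\kappa$ is established; dropping it changes nothing, since only $\kappa$, $\lambda$, $P_\kappa\lambda$, and the family $M$ are actually used.
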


Every nearly $\lambda$-supercompact cardinal $\kappa$ is also nearly $\lambda^{<\kappa}$-supercompact, by \cite[lemma~2.1.5]{Schanker2011:Dissertation}, and so we could have alternatively stated this theorem as the claim: a cardinal $\kappa$ is $\lambda^{<\kappa}$-reflective for $\Pi^1_1$ if and only if it is nearly $\lambda$-supercompact.

\begin{proof}
Suppose that $\kappa$ is nearly $\lambda$-supercompact and $\lambda=\lambda^{<\kappa}$. Consider any first-order structure $\Lambda=\<\lambda,R,f,\dots>$ with a signature of size less than $\kappa$, satisfying a $\Pi^1_1$ sentence $\varphi$. We may place this structure into a transitive model $M\elesub H_{\lambda^+}$ of size $\lambda$, closed under $\ltkappa$-sequences. So there is an elementary embedding $j:M\to N$ with critical point $\kappa$ and $j\image\lambda\in N$. It follows that $j\image\Lambda$ is in $N$, isomorphic to $\Lambda$, and a first-order elementary substructure of $j(\Lambda)$. Since $\varphi$ is $\Pi^1_1$ and is actually true in $\Lambda$, it will also be true of $j\image\Lambda$ in $N$. (This step of the argument would not necessarily work for more complicated assertions, since $N$ may not have all subsets of $j\image\Lambda$.) So $N$ thinks that $\varphi$ reflects from $j(\Lambda)$ to a small elementary substructure, whose intersection with $j(\kappa)$ is an ordinal. By elementarity, $M$ thinks the same of $\Lambda$, and since $M\elesub H_{\lambda^+}$, this is also true in $V$. So $\kappa$ is $\lambda$-reflective for $\Pi^1_1$ assertions.

Conversely, suppose now that $\kappa$ is $\lambda$-reflective for $\Pi^1_1$ assertions, and consider any family of at most $\lambda$ many subsets of $P_\kappa\lambda$ and $\lambda$ many functions $f:P_\kappa\lambda\to\lambda$. Place these into a transitive structure $M\elesub H_{\lambda^+}$ of size $\lambda$ and closed under $\ltkappa$-sequences. The nonexistence of an $M$-normal fine $\kappa$-complete filter measuring every set in $M$ is expressed by a $\Pi^1_1$ sentence over $M$. So if indeed there were no such filter, this would reflect to some elementary substructure $m\elesub M$ of size less than $\kappa$ with $m\intersect\kappa\in\kappa$. As in the proof of theorem \ref{Theorem.Reflective-supercompact}, we may use the set $s=m\intersect \lambda$ to define a filter $\mu_m=\set{X\in m\mid X\of P_\kappa\lambda, s\in X}$, and this will be fine, $m$-normal, and $\kappa$-complete. So $m$ thinks that there is an $m$-normal fine $\kappa$-complete filter, contrary to the reflection. So $\kappa$ must have had the normal fine filter property originally and thus it is nearly $\lambda$-supercompact.
\end{proof}

Theorem \ref{Theorem.Reflective-nearly-supercompact} thus provides a more refined analysis than theorem \ref{Theorem.Reflective-supercompact} in the characterization of the cardinals that are $\lambda$-reflective for $\Pi^1_1$ assertions, which are exactly the nearly $\lambda$-supercompact cardinals. This theorem is part of a constellation of closely related, similar results in the research literature, including \cite[theorems~3.5,4.7 ]{Carr1985:Pxd-generalizations-of-weak-compactness}, \cite[theorem~1.4]{Cody2020:Characterizations-of-the-weakly-compact-ideal-on-Pkappalambda}, and 
\cite[lemma~2.8]{HayutMagidor2022:Subcompact-cardinals-type-omission-and-ladder-systems}.

In the \KM\ context, we say that $\kappa$ is \emph{second-order $X$-reflective}, for any class $X\fo\kappa$, if any such assertion $\varphi$ true in a class structure $M$ on domain $X$ is also true in a set structure $m\elesub M$ of size less than $\kappa$ with $m\intersect\kappa\in\kappa$. And similarly with \emph{$X$-reflective for $\Pi^1_1$ assertions}, restricting the complexity of $\varphi$. If $\kappa$ is $\Ord$-reflective for $\Pi^1_1$, then it is $\lambda$-reflective for $\Pi^1_1$ for all $\lambda\geq\kappa$, and so by theorem \ref{Theorem.Reflective-supercompact}, it follows that $\kappa$ is supercompact. In this sense, $\Ord$-reflectivity is a strengthening of supercompactness.

\section{Second-order reflection with urelements}\label{Section.Second-order-reflection-with-urelements}

Let us now analyze the situation of second-order reflection in the urelement context. A model $\<V(A),\in,\mathcal{V}>$ of second-order urelement set theory \GBCU\ satisfies the \emph{second-order reflection principle} if every second-order statement $\varphi(X)$ true in that model of some class parameter $X\in\mathcal{V}$ reflects to some some transitive set $v\in V(A)$, meaning that $\<v,\in,P(v)>\satisfies\varphi(X\intersect v)$. By including $\GBCU$ as part of $\varphi$, we may assume $\<v,\in,P(v)>\satisfies\GBCU$, and from this it follows that $v$ must have the form $H_\kappa(w)$ for some inaccessible cardinal $\kappa$ and set of urelements $w$, because $v$ will be closed under power sets and will have to contain all the size-less-than-$\kappa$ subsets of itself, where $\kappa=v\intersect\Ord$, which will consequently be inaccessible. (Note that when $w$ has size $\kappa$ or larger, this is not the same as $V_\kappa(w)$, since $w$ itself has rank $1$ and would appear as a set, but in $H_\kappa(w)$ it would be a proper class.) The second author proved in \cite{Yao2022:Reflection-principles-and-second-order-choice-principles-with-urelements} that \KMU\ with at most $\Ord$ many urelements and second-order reflection is bi-interpretable with \KM\ plus second-order reflection. The main contribution of this article, in contrast, will be to observe the dramatic increase in large-cardinal strength of second-order reflection when there are more than $\Ord$ many urelements.  

Let us begin with the observation that from supercompactness, we can produce models of urelement set theory with more than $\Ord$ many urelements, yet with second-order reflection. This observation is one of the core ideas and main results proved by the second-author in  \cite{Yao2022:Reflection-principles-and-second-order-choice-principles-with-urelements, Yao2023:Dissertation}, showing that if $\kappa$ is $\kappa^+$-supercompact, then there is a model of \KMU\ with strictly more than $\Ord$ many urelements and the second-order reflection principle. Here, we generalize the observation to higher levels of supercompactness.

\begin{theorem}\label{Theorem.Supercompact-to-reflection}
Assume $V$ satisfies $\ZFC+\kappa$ is $\lambda$-supercompact, where $\lambda>\kappa$. In the interpreted model $V\[\lambda]$ of \ZFCU\ with $\lambda$ many urelements $A$, the hereditary model $\<H_\kappa(A),\in,\mathcal{H}>$ is a model of $\KMU+\CC$ plus second-order reflection with more than $\Ord$ many urelements, indeed, $\lambda$ many.
\end{theorem}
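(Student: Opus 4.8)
The plan is to realize the required model as the hereditary model $\<H_\kappa(A),\in,\mathcal{H}>$ of Theorem~\ref{Theorem.GBCU-H_kappa(A)}, now built over $\lambda$ many urelements rather than $\Ord$ many, and to verify second-order reflection by reflecting directly along a $\lambda$-supercompactness embedding $j\colon V\to N$, the crucial point being that the abundant urelements of the hereditary model are exactly the image $j\image A$.

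First, since $\kappa$ is $\lambda$-supercompact it is inaccessible and (by Solovay's theorem) $\lambda^{<\kappa}=\lambda$. Hence, working in the interpreted model $V\[\lambda]$ with its class $A$ of $\lambda$ many urelements, the hereditary class $H_\kappa(A)$ is a set of size $\lambda$ in $V$; let $\mathcal{H}=P(H_\kappa(A))^V$. The ordinals of $H_\kappa(A)$ are exactly those below $\kappa$, while $A\of H_\kappa(A)$ has size $\lambda>\kappa$, so $\<H_\kappa(A),\in,\mathcal{H}>$ indeed has more than $\Ord$ many urelements, namely $\lambda$ many. That $\<H_\kappa(A),\in,\mathcal{H}>\satisfies\KMU+\CC$ follows exactly as in Theorem~\ref{Theorem.GBCU-H_kappa(A)}: inaccessibility of $\kappa$ makes $\<H_\kappa(A),\in>$ a model of $\ZFCU$ (even with class parameters from $\mathcal{H}$); first- and second-order class comprehension hold because $\mathcal{H}$ consists of \emph{all} subsets of $H_\kappa(A)$ in $V$; global choice holds because $V\satisfies\AC$ well-orders the set $H_\kappa(A)$ and that order lands in $\mathcal{H}$; and each instance of $\CC$ in the model reduces to a choice in $V$ among subsets of $H_\kappa(A)$, all of which lie in $\mathcal{H}$.

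For second-order reflection, fix a $\lambda$-supercompactness embedding $j\colon V\to N$, with critical point $\kappa$, ${}^\lambda N\of N$, and $\lambda<j(\kappa)$. Given a second-order sentence $\varphi(X)$ true in $\<H_\kappa(A),\in,\mathcal{H}>$ with $X\in\mathcal{H}$, we may as usual assume $\varphi$ includes $\GBCU$ as a conjunct, since $\GBCU$ holds there. The reflecting set will be $v=j\image H_\kappa(A)$, computed in $N$. The key observation is that \emph{every element of $H_\kappa(A)$ has size below $\kappa$}, the critical point of $j$, so $j$ acts pointwise on each such element, $j(a)=j\image a$; consequently $v=j\image H_\kappa(A)$ is already transitive in $N$, and no transitive collapse is needed. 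Moreover $v$ has size $\lambda<j(\kappa)$ with urelements among $j\image A\of j(A)$, so $v\in j(H_\kappa(A))$; and because ${}^\lambda N\of N$ with $|v|=\lambda$, we have $v\in N$ and $P(v)^N=P(v)^V$, so that $j\restrict H_\kappa(A)$ together with $Y\mapsto j\image Y$ is an isomorphism of two-sorted structures $\<H_\kappa(A),\in,\mathcal{H}>\iso\<v,\in,P(v)^N>$ sending $X$ to $j\image X$. Hence $\<v,\in,P(v)^N>\satisfies\varphi(j\image X)$. Finally, since $X\of H_\kappa(A)$ and $j$ is elementary, $a\in X\iff j(a)\in j(X)$ for $a\in H_\kappa(A)$, so $j\image X=j(X)\intersect v$. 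Thus, in $N$, the transitive set $v\in j(H_\kappa(A))$ satisfies $\<v,\in,P(v)>\satisfies\varphi(j(X)\intersect v)$; that is, $N$ sees that $\varphi(j(X))$ reflects to a transitive set. Applying elementarity of $j$ to this first-order assertion about $V$ (with set parameters $H_\kappa(A)$ and $X$), the corresponding statement holds in $V$: there is a transitive $v'\in H_\kappa(A)$ with $\<v',\in,P(v')>\satisfies\varphi(X\intersect v')$. Because $\GBCU$ is built into $\varphi$, this $v'$ automatically has the form $H_\mu(w)$ for an inaccessible $\mu<\kappa$ and a set of urelements $w$, as noted at the beginning of this section. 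As $\varphi$ was arbitrary, $\<H_\kappa(A),\in,\mathcal{H}>$ satisfies the second-order reflection principle.

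I expect the main obstacle to be the correct handling of the class parameter $X$ through the reflection. A naive appeal to Theorem~\ref{Theorem.Reflective-supercompact}(1) would only reflect $\varphi$ to a generally non-transitive elementary submodel $m\elesub M$, and the ensuing transitive collapse $\pi$ moves sets, so the pushed-forward parameter $\pi\image(X\intersect m)$ need not equal $X\intersect\bar m$: the wrong parameter. The remedy above is to reflect directly to $v=j\image H_\kappa(A)$; because all members of $H_\kappa(A)$ lie below the critical point, this image is already transitive and $j\image X=j(X)\intersect v$ holds on the nose. (This is the natural generalization of the second author's argument for the $\kappa^+$-supercompact case.) A secondary point, where $\lambda^{<\kappa}=\lambda$ and ${}^\lambda N\of N$ are used, is verifying that $N$ computes the second-order part of $v$ correctly, i.e.\ that $P(v)^N=P(v)^V$.
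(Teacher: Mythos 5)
Your proof is correct, and at its core it is the same argument as the paper's: reflection comes from the $\lambda$-supercompactness embedding, and the reflected structure is transitive because every element of $H_\kappa(A)$ has size below $\kappa$. The only real difference is organizational: the paper simply applies theorem \ref{Theorem.Reflective-supercompact}(1) to the size-$\lambda$ structure $\<H_\kappa(A),\in,\mathcal{H}>$ (after arranging $\lambda=\lambda^{<\kappa}$) to get an elementary substructure $m\elesub H_\kappa(A)$ of size less than $\kappa$ with $m\intersect\kappa\in\kappa$, and then observes that this $m$ is \emph{already transitive}: any $x\in m$ is enumerated in $m$ by a bijection from some $\gamma<\kappa$, and since $\gamma\in m\intersect\kappa\in\kappa$ we get $\gamma\of m$ and hence $x\of m$. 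So the ``main obstacle'' you identify --- that the naive appeal to theorem \ref{Theorem.Reflective-supercompact}(1) yields a non-transitive $m$ whose collapse would corrupt the parameter $X$ --- is a misdiagnosis: no collapse is ever needed, for exactly the reason your own argument exploits (smallness of the elements of $H_\kappa(A)$ relative to the critical point, combined with $m\intersect\kappa\in\kappa$). Your version, which inlines the embedding and reflects to $v=j\image H_\kappa(A)$ inside $N$, is a correct and careful unwinding of the same mechanism, and your verification that $P(v)^N=P(v)^V$ via ${}^\lambda N\of N$ and that $j\image X=j(X)\intersect v$ is exactly right; it just re-proves what the black-box citation already delivers.
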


\begin{proof}
Let $V(A)$ be the extension of $V$ by adding $\lambda$ many urelements, isomorphic to the model $V\[\lambda]$ as interpreted inside $V$. We consider $H_\kappa(A)$ as defined in $V(A)$, with its isomorphic copy $H_\kappa\[\lambda]$ interpreted in $V$, taking $\mathcal{H}$ as the power set of $H_\kappa(A)$ in $V(A)$. (We can build $H_\kappa\[\lambda]$ directly in $V$ by starting with the urelement objects $\<0,\alpha>$ for $\alpha<\lambda$ and then closing under the operation $y\of H_\kappa\[\lambda]\implies \<1,y>\in H_\kappa\[\lambda]$ for $y$ of size less than $\kappa$.) This is a model of $\KMU+\CC$ by
theorem \ref{Theorem.GBCU-H_kappa(A)}, and since $\lambda>\kappa$, the model thinks the class of urelements is strictly larger than $\Ord$. By replacing $\lambda$ with $\lambda^{<\kappa}$, we may assume $\lambda=\lambda^{<\kappa}$. The structure $\<H_\kappa(A),\in,\mathcal{H}>$, has size $\lambda$, and so by theorem \ref{Theorem.Reflective-supercompact}, every second-order statement $\varphi(X)$ true in that structure reflects to some first-order elementary substructure $m$ of size less than $\kappa$, whose intersection with $\kappa$ is an ordinal $m\intersect\kappa\in m$. This implies $m$ is transitive, since if $x\in m$, then $m$ can enumerate it in some order type $\gamma$, which must be less than $\kappa$, but since $m\intersect\kappa\in\kappa$, it follows that $\gamma\of m$ and so every element of $x$ is also in $m$. So it is transitive, and so we have verified second-order reflection in this model.
\end{proof}

Next, we improve on the hypothesis by observing that the proof can be undertaken with the weaker assumption that $\kappa$ is merely nearly $\lambda$-supercompact, instead of $\lambda$-supercompact. We simply place $H_\kappa\[\lambda]$ into a transitive $M\elesub H_{\lambda^+}$ of size $\lambda$, closed under $\ltkappa$-sequences, and then apply theorem \ref{Theorem.Reflective-nearly-supercompact}. This provides:

\begin{theorem}\label{Theorem.Nearly-supercompact-KMU+reflection}
Assume $V$ satisfies $\ZFC+\kappa$ is nearly $\lambda$-supercompact, where $\lambda>\kappa$. In the interpreted model $V\[\lambda]$ of \ZFCU\ with $\lambda$ many urelements $A$, the hereditary model $\<H_\kappa(A),\in,\mathcal{H}>$ is a model of $\KMU+\CC$ plus $\Pi^1_1$ reflection with more than $\Ord$ many urelements, indeed, $\lambda$ many.
\end{theorem}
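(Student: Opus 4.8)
The plan is to run the proof of Theorem \ref{Theorem.Supercompact-to-reflection} essentially word for word, replacing its single appeal to the supercompactness characterization of Theorem \ref{Theorem.Reflective-supercompact} by the finer $\Pi^1_1$-level characterization of Theorem \ref{Theorem.Reflective-nearly-supercompact}. First I would normalize the cardinal arithmetic: since every nearly $\lambda$-supercompact cardinal is also nearly $\lambda^{<\kappa}$-supercompact \cite[lemma~2.1.5]{Schanker2011:Dissertation}, we may replace $\lambda$ by $\lambda^{<\kappa}$ and so assume $\lambda=\lambda^{<\kappa}$; this loses nothing for the conclusion, which asks only for more than $\Ord$ many urelements. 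Working inside $V$, build the copy $H_\kappa\[\lambda]$ of $H_\kappa(A)$ directly --- start from the urelement objects $\<0,\alpha>$ for $\alpha<\lambda$ and close under $y\mapsto\<1,y>$ for sets $y$ of size less than $\kappa$ --- and equip it with $\mathcal{H}$, the power set of $H_\kappa(A)$ as computed in $V\[\lambda]$. By Theorem \ref{Theorem.GBCU-H_kappa(A)}, the structure $\<H_\kappa(A),\in,\mathcal{H}>$ is a model of $\KMU+\CC$; its ordinals are exactly $\kappa$, while $|A|=\lambda>\kappa$, so it has strictly more than $\Ord$ many urelements, in fact $\lambda$ many.

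It then remains to verify second-order $\Pi^1_1$ reflection in $\<H_\kappa(A),\in,\mathcal{H}>$. Given a $\Pi^1_1$ assertion $\varphi(X)$ true there with a class parameter $X\in\mathcal{H}$, absorb $X$ into the signature so that $M=\<H_\kappa(A),\in,X>$ is a first-order structure on a domain of size $\lambda=\lambda^{<\kappa}$. The key move is to apply the reflectivity of $\kappa$ in $V$: since $\kappa$ is nearly $\lambda$-supercompact and $\lambda=\lambda^{<\kappa}$, Theorem \ref{Theorem.Reflective-nearly-supercompact} gives that $\kappa$ is $\lambda$-reflective for $\Pi^1_1$ assertions. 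Concretely one places $H_\kappa\[\lambda]$ into a transitive $M^{*}\elesub H_{\lambda^+}$ of size $\lambda$ closed under $\ltkappa$-sequences, takes a near-$\lambda$-supercompactness embedding $j\colon M^{*}\to N$ with critical point $\kappa$, $\lambda<j(\kappa)$, and $j\image\lambda\in N$, and observes that $j\image M$ is isomorphic to $M$, is a first-order elementary substructure of $j(M)$, and --- since $\varphi$ is $\Pi^1_1$ and $N$ has all the relevant subsets of $j\image M$ --- satisfies $\varphi$ in $N$; elementarity then pushes the reflection back down. Either way we obtain a first-order elementary substructure $m\elesub M$ of size less than $\kappa$ with $m\intersect\kappa\in\kappa$ in which $\varphi$ holds, with its second-order quantifiers ranging over $P(m)$.

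Finally I would check that $m$ is transitive, exactly as in Theorem \ref{Theorem.Supercompact-to-reflection}: if $x\in m$ then $m$ (which models enough choice, being an elementary submodel of $M$, which thinks every set is well-orderable) enumerates $x$ in some order type $\gamma<\kappa$, and $m\intersect\kappa\in\kappa$ forces $\gamma\of m$, so every element of $x$ lies in $m$. Hence $m$ is a transitive set of size less than $\kappa$, so $m\in H_\kappa(A)\of V(A)$; and because $\varphi$ is $\Pi^1_1$, its truth in $m$ with second-order part $P(m)$ coincides with its genuine truth in $\<m,\in,P(m)>$, giving $\<m,\in,P(m)>\satisfies\varphi(X\intersect m)$, which is precisely an instance of the second-order $\Pi^1_1$ reflection principle. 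The only real friction I anticipate is organizational --- keeping straight that the near-supercompactness of $\kappa$ lives in $V$ whereas $\<H_\kappa(A),\in,\mathcal{H}>$ lives in $V\[\lambda]$, handled by working with the copy $H_\kappa\[\lambda]$ inside $V$ and identifying $V$ with the pure sets of $V\[\lambda]$, and confirming that the ``$m$ carries all its subsets'' output of the reflective-cardinal framework is what the reflection principle demands, which is unproblematic exactly because $\Pi^1_1$ statements are universal in the second-order quantifier. No genuinely new idea beyond Theorems \ref{Theorem.Supercompact-to-reflection} and \ref{Theorem.Reflective-nearly-supercompact} is required.
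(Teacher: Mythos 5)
Your proposal is correct and is essentially identical to the paper's own argument: the paper proves this theorem in a single sentence by noting that one places $H_\kappa\[\lambda]$ into a transitive $M\elesub H_{\lambda^+}$ of size $\lambda$ closed under $\ltkappa$-sequences and applies theorem \ref{Theorem.Reflective-nearly-supercompact}, with everything else (the normalization $\lambda=\lambda^{<\kappa}$, the appeal to theorem \ref{Theorem.GBCU-H_kappa(A)}, and the transitivity of $m$ from $m\intersect\kappa\in\kappa$) carried over verbatim from the proof of theorem \ref{Theorem.Supercompact-to-reflection}. Your unpacking of the embedding argument and the downward persistence of $\Pi^1_1$ sentences matches the intended reasoning.
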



A consequence of theorem \ref{Theorem.Nearly-supercompact-KMU+reflection} is that to produce a model of $\KMU+\CC$ with strictly more than $\Ord$ many atoms and second-order reflection, it suffices to have a nearly $\kappa^+$-supercompact cardinal, which is strictly weaker than $\kappa^+$-supercompact, although this hypothesis remains very strong in large cardinal terms---for example, by \cite{Schanker2011:Dissertation} it implies $\AD^{L(\R)}$.

\section{Second-order reflection with abundant atoms implies supercompact}

We come now finally to the main result of this article, where we aim to prove conversely that supercompactness is required for second-order reflection in the context of urelement set theory when there are abundant atoms, even just for $\Pi^1_1$ reflection.

\begin{maintheorem}\label{Theorem.AAA+RP2-bi-interpretation-five-theories}
The following theories are bi-interpretable.
\begin{enumerate}[start=0]
  \item $\GBc+{}$abundant atom axiom $+$ second-order reflection
  \item $\KMU+\CC+{}$abundant atom axiom $+$ second-order reflection
  \item $\KM+\CC+\kappa\text{ is supercompact and second-order }\Ord\text{-reflective}$
  \item $\KMU+\CC+\Ord\text{ many atoms}+\kappa$ is supercompact and second-order $\Ord \text{-reflective}$
  \item $\ZFCm+\kappa$ is $\ltlambda$-supercompact and (moreover) second-order $\lambda$-reflective, where $\lambda$ is the largest cardinal and inaccessible
  \item $\ZFCU^{-}+\lambda$ many atoms${}+\kappa$ is $\ltlambda$-supercompact and (moreover) second-order \text{$\lambda$-reflective}, where $\lambda$ is the largest cardinal and inaccessible
\end{enumerate}
\end{maintheorem}

\newpage
\begin{wrapfigure}[14]{r}{.4\textwidth}\hfill
\begin{tikzpicture}[scale=.8]
\draw[fill=Blue!15] (0,4) -- (0,5.5) -- (1,5.5) to node[below,scale=.45,align=center] {\Large $W(A)$\\[1ex] $\ZFCU^{-}$\\ $\lambda$ inaccessible, $\lambda$ atoms\\ $\kappa$ is $\ltlambda$-supercompact,\\ $\lambda$-reflective} (4,5.5) to[out=0,in=70] (5,4);
\draw[fill=Blue!30] (-1,4) to[out=105,in=180] (0,5.5) node[below,scale=.45,align=center] {\Large $W$\\[1ex] $\ZFCm$\\ $\lambda$ inacc\\ $\kappa$ is $\ltlambda$-supercompact,\\ $\lambda$-reflective} to [out=0,in=75] (1,4);
\draw[fill=Orange!50] (0,0) -- (-1,4) -- (1,4) to node[below,scale=.5,align=center] {\Large $\Vbar(A)$\\[2ex] $\KMU+\CC$\\ $\Ord$ many atoms\\ $\kappa$ supercompact, $\Ord$-reflective}  (5,4) node[right,scale=.7] {$\lambda$} --  (4,0) -- cycle;
\draw[fill=Yellow!50] (0,0) -- (-1,4) to node[below,scale=.45,align=center] {\Large $\Vbar$\\[1ex] $\KM+\CC$\\ $\kappa$ supercompact,\\ $\Ord$-reflective} (1,4) -- cycle;
\draw[fill=Yellow] (0,0) -- (-.5,2) to node[below,scale=.5,align=center] {\Large$V$\\ \scriptsize\KM\\ \scriptsize\CC} (.5,2) -- cycle;
\draw[fill=Orange] (0,0) -- (.5,2) to node[below,scale=.5,align=center] {\Large $V(A)$\\[1ex] $\KMU+\CC$\\ abundant atoms\\ second-order reflection} (4.5,2) node[right,scale=.7] {$\kappa$} -- (4,0) -- cycle;
\draw[DarkRed,dotted,line width=2pt,line cap=round, dash pattern=on 0pt off \pgflinewidth] (0,0) to node[below,scale=.45,align=center] {abundant atoms $A$} (4,0);
\end{tikzpicture}
\captionsetup{style=rightside}
\caption{Supercompactness bi-interpretation}\label{Figure.Supercompact-five-models}
\end{wrapfigure}
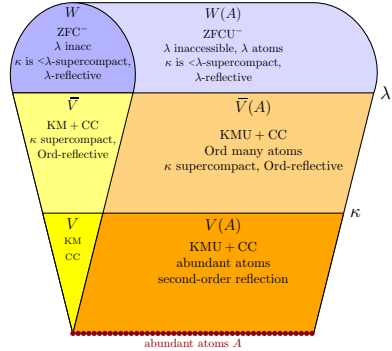
\smallskip\noindent\emph{Proof.} 
Several of the theories are redundantly stated in order to highlight the presence of supercompactness; namely, as we mentioned earlier every second-order $\Ord$-reflective cardinal is automatically supercompact, and similarly every second-order $\lambda$-reflective cardinal for an inaccessible cardinal $\lambda$ is also $\ltlambda$-supercompact by theorem \ref{Theorem.Reflective-supercompact}. Furthermore, theories (0) and (1) are provably identical in light of theorem \ref{Theorem.GBc+reflection=KMU+CC}. For the remaining bi-interpretations, we shall use the same interpretations as in theorem \ref{Theorem.AAA-bi-interpretation-five-theories}, while observing how the stronger reflection properties here manifest in the different models.

Let us start at the lower right of figure \ref{Figure.Supercompact-five-models} with the model $\<V(A),\in,\mathcal{V}>$ of $\KM+\CC$ with the abundant atom axiom and second-order reflection. We observe first that the second-order reflection property ensures that $\kappa$ is second-order $\lambda$-reflective in $W$. Notice that the concept of $\lambda$-reflective makes sense as a scheme in a model of $\ZFCm$, since it refers only to objects of size $\lambda$. But any structure $M$ in $W$ of size $\lambda$ is coded by a class in $\<V(A),\in,\mathcal{V}>$, which has the same subclasses there as the subsets available in $W$. We can equip this structure with a first-order truth predicate, and by the second-order reflection principle every second-order assertion in that expanded structure will reflect to a transitive set in $V(A)$, which because of the truth predicate will also be a first-order elementary substructure in $W$ of size less than $\kappa$. The transitivity of the substructure in $V(A)$ translates to transitivity-below-$\kappa$ in $W$ (urelements in substructure in $V(A)$ will give rise possibly to nontransitivity above $\kappa$). So $\kappa$ is $\lambda$-reflective in $W$. From this it follows that $\kappa$ is $\ltlambda$-supercompact in $W$, which implies both that $\kappa$ is fully supercompact in $\Vbar$, which is simply $W_\lambda$, and also that $\kappa$ is $\Ord$-reflective in $\<\Vbar,\barin,\Vcalbar>$, which itself translates back to $\lambda$-reflectivity in $W$. The supercompactness of $\kappa$ carries over through the bi-interpretations to $\Vbar(A)$ and to $W(A)$, and conversely, and so we've got all the theories just as we want them.
$\QEDbox$\medskip\goodbreak 

The second-order reflection principle, we claim, does not necessarily hold in $\<\Vbar,\barin,\Vcalbar>$, since $\lambda$ could be the next inaccessible above $\kappa$, and this would violate second-order reflection, which implies that the inaccessible cardinals (and the $\Pi^1_n$-indescribable cardinals, etc.) form a stationary proper class. This situation arises if $\kappa$ is $\lambda$-supercompact for the next inaccessible cardinal $\lambda$ and we form $H_\kappa\[\lambda]$ as in theorem \ref{Theorem.Supercompact-to-reflection}. This would be a model of theory (1), but the corresponding $\Vbar$ would have no inaccessible cardinals above $\kappa$ and hence not fulfill second-order reflection.\goodbreak

We get an analogous version of the theorem for $\Pi^1_1$-reflectivity as follows.

\begin{maintheorem}[$\Pi^1_1$ variation]\label{Theorem.AAA+Pi11-bi-interpretation-five-theories}
The following theories are bi-interpretable.
\begin{enumerate}
  \item $\KMU+\CC+{}$abundant atom axiom $+\Pi^1_1$ reflection
  \item $\KM+\CC+\kappa\text{ is supercompact and }\Ord\text{-reflective for }\Pi^1_1$
  \item $\KMU+\CC+\Ord\text{ many atoms}+\kappa\text{ supercompact and }\Ord\text{-reflective for }\Pi^1_1$
  \item $\ZFCm+\kappa$ is $\ltlambda$-supercompact and (moreover) nearly $\lambda$-supercompact, where $\lambda>\kappa$ is the largest cardinal and inaccessible
  \item $\ZFCU^{-}+\lambda\text{ many atoms}+\kappa$ is $\ltlambda$-supercompact and (moreover) nearly $\lambda$-supercompact, where $\lambda>\kappa$ is the largest cardinal and inaccessible
\end{enumerate}
\end{maintheorem}

The point is that $\Pi^1_1$-reflection in the original model carries over to $\Pi^1_1$ reflection in the other models (and conversely), ensuring in $W$ that $\kappa$ is $\lambda$-reflective for $\Pi^1_1$ assertions, which is equivalent to it being nearly $\lambda$-supercompact by theorem \ref{Theorem.Reflective-nearly-supercompact}.

Let us finally also address the central question left open in \cite{Yao2022:Reflection-principles-and-second-order-choice-principles-with-urelements}, concerning the strength of \KMU\ with second-order reflection and ``more than $\Ord$ many atoms.'' Yao had used a $\kappa^+$-supercompact cardinal $\kappa$ in order to produce a model of this theory, and the question is whether supercompactness is required, and specifically whether the large cardinal requirements will exceed the large cardinal notions consistent with $V=L$.

First, we know by theorem \ref{Theorem.Nearly-supercompact-KMU+reflection} that for $\Pi^1_1$ reflection it suffices to have merely a nearly $\kappa^+$-supercompact cardinal $\kappa$, and since this is strictly weaker than $\kappa^+$-supercompactness, this shows that full $\kappa^+$-supercompactness is not required for this amount of reflection.

Meanwhile, second, the hypothesis that there is a nearly $\kappa^+$-supercompact cardinal $\kappa$ is still quite strong---by \cite{Schanker2011:Dissertation} this hypothesis (with $2^\kappa=\kappa^+$) implies $\AD^{L(\R)}$ over \ZFC, which brings us to a strong realm of the large cardinal hierarchy. We can prove the following equiconsistency:\goodbreak

\begin{theorem}\label{Theorem.KMU+-strength}
The following theories are mutually interpretable, hence also equiconsistent
\begin{enumerate}
  \item $\KMU+\CC+\textup{more than }\Ord\textup{ many atoms}+\Pi^1_1\textup{-reflection}$
  \item $\ZFCm+\kappa$ is nearly $\kappa^+$-supercompact
\end{enumerate}
\end{theorem}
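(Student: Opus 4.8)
The plan is to establish mutual interpretability directly, one interpretation in each direction. Interpreting theory~(1) inside theory~(2) is essentially Theorem~\ref{Theorem.Nearly-supercompact-KMU+reflection} carried out over $\ZFCm$ rather than full $\ZFC$. Interpreting theory~(2) inside theory~(1) will go by the urelement unrolling construction used in the Main Theorems, together with the observation that the urelement reflection principle translates, across the unrolling, into a reflectivity property of the ordinals of the ground model.

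For the first interpretation, suppose $\<W,\varin>\satisfies\ZFCm$ with $\kappa$ nearly $\kappa^+$-supercompact. This hypothesis lies well above weak compactness, so $\kappa$ is inaccessible; in particular $\kappa$ is a strong limit, which gives $2^{\ltkappa}\leq\kappa$ and hence $(\kappa^+)^{\ltkappa}=\kappa^+$. I would then work inside a transitive witness $M$ for near $\kappa^+$-supercompactness---a transitive model of $\ZFCm$, closed under $\ltkappa$-sequences, with $\kappa^+\in M$---and build there the interpreted structure $H_\kappa\[\kappa^+]$, obtained by starting with the urelement objects $\<0,\alpha>$ for $\alpha<\kappa^+$ and closing under $y\mapsto\<1,y>$ for sets $y$ of size less than $\kappa$, equipped with the classes $\mathcal H$ consisting of its subsets that lie in $M$. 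By the reasoning of Theorem~\ref{Theorem.GBCU-H_kappa(A)}, $\<H_\kappa\[\kappa^+],\in,\mathcal H>$ is a model of $\KMU+\CC$, and it plainly has $\kappa^+>\kappa=\Ord$ many atoms. For $\Pi^1_1$ reflection, recall from Theorem~\ref{Theorem.Reflective-nearly-supercompact} that near $\kappa^+$-supercompactness of $\kappa$ is equivalent to $\kappa$ being $\kappa^+$-reflective for $\Pi^1_1$; applying this to the Skolemized structure $\<H_\kappa\[\kappa^+],\in,\mathcal H,X>$, which has size $\kappa^+$ and contains $\kappa$, any true $\Pi^1_1$ sentence reflects to some $m\elesub M$ of size less than $\kappa$ with $m\intersect\kappa\in\kappa$. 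Such an $m$ is automatically transitive---since $m\intersect\kappa\in\kappa$, any $x\in m$, enumerated in $m$ in some order type below $\kappa$, has all its elements in $m$---so $m\in H_\kappa\[\kappa^+]$, and $\<m,\in,P(m)>$, which contains all subsets of $m$, witnesses the required reflection. The reflectivity-to-near-supercompactness direction of Theorem~\ref{Theorem.Reflective-nearly-supercompact} uses only transitive models closed under $\ltkappa$-sequences containing the relevant family, which is exactly what near $\kappa^+$-supercompactness supplies, so that argument adapts verbatim to the $\ZFCm$ setting. This interprets a model of theory~(1).

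For the converse interpretation, suppose $\<V(A),\in,\mathcal V>$ is a model of theory~(1). I would run the urelement unrolling of the proofs of Theorems~\ref{Theorem.AAA-bi-interpretation-five-theories} and~\ref{Theorem.AAA+Pi11-bi-interpretation-five-theories}, forming $\<W(A),\varin>$ from the urelement membership codes. The point to observe is that this is a model of $\ZFCU^{-}$ with \emph{no} appeal to the abundant atom axiom: $\ZFCU^{-}$ has no power-set axiom, so the failure of $P(\Ord)$ to admit a class-sized parametrization in $\KMU$ is harmless, and the collection axiom in the unrolling comes straight from \CC. Here $\kappa:=\Ord^{V(A)}$ is inaccessible in $W(A)$, and since $|A|>\Ord$ the class $A$ becomes a set of atoms of size $\lambda>\kappa$; let $W$ be the pure part of $W(A)$, a model of $\ZFCm$ with $\kappa$ inaccessible and with cardinals above $\kappa$. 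The crux is that the urelement $\Pi^1_1$ reflection principle holding in $\<V(A),\in,\mathcal V>$ says precisely that $\kappa$ is $\kappa^+$-reflective for $\Pi^1_1$ in $W$: a $\Pi^1_1$ sentence $\varphi$ true in a structure $\mathcal A$ of size $\kappa^+$ in $W$ with $\kappa$ named, unwound back into $\<V(A),\in,\mathcal V>$, becomes a $\Pi^1_1$ assertion there with a class parameter, which by hypothesis reflects to a transitive set $H_\mu(w)$ with $\mu<\kappa$ inaccessible and $w$ a set of atoms; then $\mathcal A\intersect H_\mu(w)$ is a first-order elementary substructure of $\mathcal A$ of size less than $\kappa$---its atoms $w$ together with its sets of rank below $\mu$ over $w$ number fewer than $\kappa$---whose intersection with $\kappa$ is the ordinal $\mu$, and $\varphi$ reflects to it. By Theorem~\ref{Theorem.Reflective-nearly-supercompact}---whose reflectivity-to-near-supercompactness direction needs only transitive models of size $\kappa^+$ closed under $\ltkappa$-sequences, obtained in $W$ by closing a family of size $\kappa^+$ under $\ltkappa$-sequences, using that $(\kappa^+)^{\ltkappa}=\kappa^+$ since $\kappa$ is inaccessible in $W$---it follows that $\kappa$ is nearly $\kappa^+$-supercompact in $W$. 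Hence $\<W,\in>\satisfies\ZFCm+{}$``$\kappa$ is nearly $\kappa^+$-supercompact,'' completing the second interpretation.

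I expect the main obstacle to be the converse interpretation, and within it the two points just flagged. First, one must verify that the urelement unrolling of a model of merely $\KMU+\CC+{}$``$>\Ord$ atoms'' is a genuine model of $\ZFCU^{-}$ with $\kappa$ inaccessible: the abundant atom axiom is not available, and it was precisely that axiom which made $\lambda$ itself inaccessible and supplied power set up to $\lambda$ in Theorem~\ref{Theorem.AAA-bi-interpretation-five-theories}---but here we need only $\kappa$ inaccessible, and the regularity of $\kappa$ keeps $P_\kappa(\kappa^+)$ and the needed $\ltkappa$-closures of size-$\kappa^+$ families set-sized even though power set fails above $\kappa$. Second, one must confirm that the urelement reflection principle really does unwind to $\kappa^+$-reflectivity of $\Ord$ at the level of the unrolled model---reflecting $\Pi^1_1$ assertions about class structures in $\<V(A),\in,\mathcal V>$ down to transitive sets $H_\mu(w)$---rather than to some weaker reflection confined below $\kappa$; this is what promotes the consequence from mere weak compactness of $\kappa$ in the unrolled model up to near $\kappa^+$-supercompactness.
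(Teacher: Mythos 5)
Your overall strategy is the same as the paper's: in one direction, unroll the $\KMU+\CC$ model to a $\ZFCm$ model $W$ in which $\Ord^{V(A)}$ becomes an inaccessible cardinal $\kappa$ with $\kappa^+$ existing because there are more than $\Ord$ many atoms, and translate the urelement $\Pi^1_1$ reflection principle into $\kappa$ being $\kappa^+$-reflective for $\Pi^1_1$ assertions, hence nearly $\kappa^+$-supercompact by theorem \ref{Theorem.Reflective-nearly-supercompact}; in the other direction, adjoin $\kappa^+$ many urelements and pass to the hereditary model $H_\kappa(A)$. The worries you flag in the unrolling direction---that no appeal to the abundant atom axiom is needed since $\ZFCm$ lacks power set, and that reflection genuinely unwinds to reflectivity at $\kappa^+$ rather than to something confined below $\kappa$---are the right ones, and you resolve them essentially as the paper does.

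There is, however, a genuine flaw in your interpretation of theory (1) inside theory (2): you equip $H_\kappa\[\kappa^+]$ with the family $\mathcal H$ of its subsets lying in a set-sized transitive witness $M$, whereas the paper (and theorem \ref{Theorem.Nearly-supercompact-KMU+reflection}) takes $\mathcal H$ to consist of \emph{all} subsets of $H_\kappa(A)$ available in the ambient model $W$. This matters. A $\Pi^1_1$ assertion of the resulting $\KMU$ model quantifies only over $\mathcal H$; with your choice this is a proper subfamily of the subsets available in $W$ (by Cantor there are more than $\kappa^+$ many such subsets in $W$, while a set-sized $M$ contains at most $|M|$ of them), so truth of $\forall Y\,\psi$ in your model is strictly weaker than genuine full-semantics $\Pi^1_1$ truth over the first-order structure $\<H_\kappa\[\kappa^+],\in,X>$. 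The proof of theorem \ref{Theorem.Reflective-nearly-supercompact} needs exactly that full-semantics truth: it is because $\varphi$ is actually true of $\Lambda$ that it remains true of $j\image\Lambda$ as computed in $N$, which has fewer subsets; truth over the restricted family $\mathcal H$ gives no such conclusion. Moreover, the reflection principle's target is $\<v,\in,P(v)>$ with the full power set of $v$, while elementarity of a substructure of the two-sorted structure $\<H_\kappa\[\kappa^+],\in,\mathcal H,X>$ controls only those subsets of $v$ arising as traces of elements of $\mathcal H$; so even where your reflection argument lands, it does not land on the required conclusion. The repair is simply to take $\mathcal H$ to be all subsets of $H_\kappa(A)$ in $W$ (equivalently, in $W(A)$), as the paper does; the transitive witness $M$ then enters only inside the proof of theorem \ref{Theorem.Reflective-nearly-supercompact}, where one places the relevant size-$\kappa^+$ structure into such an $M$, and not in the definition of the interpreted model itself.
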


\begin{proof}
Suppose that $\<V(A),\in,\mathcal{V}>$ is a model of the first theory. Let $\<W,\varin>$ be the model of $\ZFCm$ obtained by the unrolling construction. Let $\kappa$ be the cardinal  in $W$ that results from a membership code coding the class $\Ord^V$ itself. This will be an inaccessible cardinal in $W$, because $V(A)$ thinks that $\Ord$ is closed under power sets and regular with respect to class in $\mathcal{V}$. Since $A$ has size larger than $\Ord$, however, there will cardinals in $W$ strictly larger than $\kappa$, so $\lambda=\kappa^+$ will exist in $W$ (and perhaps many more cardinals, depending on how many proper class cardinalities there are in $\mathcal{V}$). The argument of theorem \ref{Theorem.AAA+RP2-bi-interpretation-five-theories} shows that $\kappa$ is reflective for $\Pi^1_1$ assertions in $W$ and hence nearly $\lambda$-supercompact there, providing an interpreted model of the first theory.

Conversely, if $\<W,\in>$ is a model of the second theory, where $\kappa$ is nearly $\kappa^+$-supercompact, then we may form the interpreted model $W\[\kappa^+]$, view it as adjoining a set of $\kappa^+$ many urelements $W(A)$, and then interpret the model $\<H_\kappa(A),\in,\mathcal{H}>$, where $\mathcal{H}$ consists of the subsets $X\of H_\kappa(A)$ that are available in $W(A)$. This is a model of $\KMU+\CC$ with $\Ord=\kappa$ and $\kappa^+$ many atoms. The fact that $\kappa$ is nearly $\kappa^+$-supercompact in $W$ implies the reflection principle for $\Pi^1_1$ assertions in the hereditary model, providing a model of the first theory.
\end{proof}

We could achieve a bi-interpretation if we specifically add to the first theory that there are precisely $\Ord^+$ many atoms, meaning that the class of atoms is not equinumerous with $\Ord$, but every proper class is bijective with $\Ord$ or with the class of all atoms.

Although the hypothesis that $\kappa$ is nearly $\kappa^+$-supercompact is strong in \ZFC, we are unsure how much of this large cardinal strength can be established in $\ZFCm$. We conjecture, however, that even in the $\ZFCm$ context this hypothesis will imply that there is an inner model of \ZFC\ with a measurable cardinal and probably much more; we leave this problem for another time.

\section{Concluding philosophical remarks}

How shall we consider urelements in set theory? 
Historically, set theory in the main has largely abandoned urelements in its fundamental theories, replacing the early urelement set theories with pure set theories such as \ZF\ and \ZFC, a development that one can easily explain on structuralist grounds (a point made also in \cite[\S 8.4]{Hamkins2021:Lectures-on-the-philosophy-of-mathematics}). Namely, since all the mathematical structures that early set theorists wanted to build with urelements, such as number systems or geometric spaces, have found isomorphic copies within the pure sets, and since structuralists do not care which particular objects will be used to constitute a mathematical structure, considering it only as invariant under isomorphism, the urelements are seen as inessential. Indeed, a mathematician who favors urelements on the grounds that some mathematical objects at bottom are not sets and should not be represented with sets can be seen as preoccupied with a misguided anti-structuralist concern, one that, according to structuralism, is irrelevant for mathematical advance.

Furthermore, the bi-interpretability of many natural formulations of urelement set theory with corresponding pure set theories, as in theorems \ref{Theorem.V-bi-interpretable-V[A]-AAvec} and \ref{Theorem.GBC-GBCU-bi-interpretation}, is itself an explanation of precisely how those particular urelement conceptions can be dispensed with in the foundations of mathematics---any mathematical structure to be found in the urelement set theories can be found via the bi-interpretation also in the corresponding pure set theories. And theorems \ref{Theorem.AAA-bi-interpretation-five-theories} and \ref{Theorem.AAA+RP2-bi-interpretation-five-theories} show that this remains true even when one adds abundant urelements and second-order reflection.

If urelement set theories are to play an indispensible role in the foundations of mathematics, therefore, it must not be with those theories, but rather with urelement set theories that are not bi-interpretable with a pure set theory and perhaps not interpretable at all in any pure set theory. But in this case, it would seem that the urelement set theories must involve much stranger sets of urelements, neither well-orderable nor even equinumerous with any pure set. The mathematical structures built on such domains will not be isomorphic with any structure to be found amongst the pure sets. But what are these strange urelements that give rise to these weird sets? One wants an explanation for why we should need or expect to find such sets in the foundations of mathematics. What mathematical structures will they represent?

In second-order urelement set theory, the abundant classes of urelements are perhaps second-order instances of such strange classes, since they are not equinumerous with any class of pure sets. Indeed, the non-equivalence of the class well-order principle with the $\Ord$-enumeration of the universe (equivalently expressed by the so-called limitation of size principle) opens to the door to the possibility of strictly more than $\Ord$ many atoms, even when they are well-orderable, with the abundant atom axiom simply carrying this to an extreme. Such urelement universes are short and fat---overflowing in width with abundant atoms but limited in height with comparatively fewer ordinals. Precisely because of this, perhaps the main lesson of theorem \ref{Theorem.AAA-bi-interpretation-five-theories} is that the presence of so many urelements simply indicates that one hasn't continued the cumulative hierarchy high enough. One should add more ordinals on top of the universe, continuing the rank hierarchy to higher levels, and the abundant atoms available tell you exactly how to do this. Indeed, the unrolling constructions of $\Vbar$ and $W$ from the model $\<V(A),\in,\mathcal{V}>$ are exactly implementing this idea of continuing to build the ordinals and rank levels higher. The result is a model $\Vbar(A)$ in which the same class of atoms now has type $\Ord$, which is to say, the new higher class of ordinals $\Ord^{\Vbar}$, or even forms a set as in $W(A)$, with more ordinals still. From this perspective, the abundant atom axiom reveals the set-theoretic universe as an unfinished project---it should have been built taller. The weaker hypothesis of having a well ordered class of strictly more than $\Ord$ many urelements is similarly unnatural, a sign that one simply has not continued the cumulative hierarchy high enough.

Where does this leave us? If urelements are to be well-orderable, then we should have at most $\Ord$ of them, or else it is a sign that we have not properly built the cumulative hierarchy of pure sets; but if we have at most $\Ord$ of them, then we don't need them at all, since having $\Ord$ many atoms (or fewer) is bi-interpretable with the pure set theories. So if urelements are to play a critical role in the foundations of mathematics, it must be that they are not wellorderable, and furthermore there must be sets of them that are not equinumerous with any pure set.

\printbibliography

\end{document}